\newtheorem{theorem}{Theorem}[section]
\newtheorem{remark}{Remark}[section]
\newtheorem{lemma}[theorem]{Lemma}
\newcommand{\bt}{\begin{theorem}}
\newcommand{\bl}{\begin{lemma}}
\newcommand{\el}{\end{lemma}}
\newcommand{\et}{\end{theorem}}
\newcommand{\bn}{\begin{eqnarray}}
\newcommand{\en}{\end{eqnarray}}
\newcommand{\bnn}{\begin{eqnarray*}}
\newcommand{\enn}{\end{eqnarray*}}
\newcommand{\ba}{\begin{aligned}}
\newcommand{\ea}{\end{aligned}}
\newcommand{\be}{\begin{equation}}
\newcommand{\ee}{\end{equation}}
\def\norm[#1]#2{\|#2\|_{#1}}
\newcommand{\Bn}{{\boldsymbol{n}}}
\newcommand{\Bu}{{\boldsymbol{u}}}
\newcommand{\BJ}{{\boldsymbol{J}}}
\newcommand{\BLambda}{{\boldsymbol{\Lambda}}}
\newcommand{\Bsigma}{{\boldsymbol{\sigma}}}
\newcommand{\Bepsilon}{{\boldsymbol{\epsilon}}}
\newcommand{\Bv}{{\boldsymbol{v}}}
\newcommand{\Bh}{{\boldsymbol{h}}}
\newcommand{\Bw}{{\boldsymbol{w}}}
\newcommand{\Bz}{{\boldsymbol{z}}}
\newcommand{\Bpsi}{{\boldsymbol{\psi}}}
\newcommand{\Bxi}{{\boldsymbol{\xi}}}
\newcommand{\Bchi}{{\boldsymbol{\chi}}}
\newcommand{\Bvarphi}{{\boldsymbol{\varphi}}}
\newcommand{\Bphi}{{\boldsymbol{\phi}}}
\newcommand{\Btau}{{\boldsymbol{\tau}}}
\begin{document}

\title
{Solvability of generalized div-curl system and Friedrichs inequalities}

\author{Yun Wang}
\address{Department of Mathematics, Soochow University, SuZhou\\
215006, People's Republic of China}
\email{ywang3@suda.edu.cn}
\date{}

\maketitle

\begin{abstract}
In this article, we consider the solvability of two generalized {\bf div-curl} systems. They are referred to as the equations of magnetostatics and electro- , resp.. Necessary compatibility conditions on the data for the existence of solutions are fully described, and existence of $W^{m, p}$-solutions is proved. Moreover, we give some description for the null spaces. As a corollary, we give the estimates of gradient  via generalized ${\rm div}$ and ${\rm curl}$ in $L^p$-framework, which can be considered as Friedrichs inequalities. Furthermore, two decompositions of Sobolev spaces are given.

Keywords: generalized ${\rm div}-{\rm curl}$ system, equations of eletro-, equations of magnetostatics,  Friedrichs inequality.

AMS: 35F15, 78A30, 35D35
\end{abstract}

%%%%%%%%%%%%%%%%%%%%%%%%%%%%%%%%%%%%%%%%%%%%%%%%%%%%%%%%%%%%%%%%Introduction%%%%%%%%%%%%%%%%%%%%%%%%%%%%%%%%%%%%%%%%%%%%

\section{Introduction}

In this paper, we consider the solvability of the following systems of first order differential equations,
\begin{equation}\label{1-1}\left\{
\begin{array}{l}
{\rm curl}~(\Bsigma \Bu ) = \BJ, \ \ \ \mbox{in}\ \Omega,\\[2mm]
{\rm div}~\Bu = \rho, \ \ \ \ \ \mbox{in}\ \Omega, \\[2mm]
\Bu \cdot \Bn = \lambda ,\ \ \ \ \mbox{on}\ \partial \Omega,
\end{array}
\right.
\end{equation}
and
\begin{equation}\label{1-2} \left\{
\begin{array}{l}
{\rm curl}~\Bu = \BJ , \ \ \ \mbox{in} \ \Omega, \\[2mm]
{\rm div}~(\Bepsilon \Bu) = \rho, \ \ \ \mbox{in}\ \Omega, \\[2mm]
\Bu\times \Bn = \BLambda,\ \ \ \ \mbox{on}\ \partial \Omega.
\end{array}
\right.
\end{equation}
Here $\Omega \subset \mathbb{R}^3$ is a bounded domain with smooth boundary $\partial \Omega$. $\Bn$ is the outer unit normal on $\partial \Omega$. The coefficients  $\Bsigma, \Bepsilon$ are $3\times 3$ symmetric matrices, with smooth elements, satisfying the uniform ellipticity condition, i.e.,
there exist positive constants $m, M$ such that
\begin{equation}\nonumber
m|\xi|^2 \leq \Bsigma(x) \xi \cdot \bar{\xi} , \ \ \Bepsilon (x) \xi \cdot \bar{\xi} \leq M |\xi|^2, \ \ \ \mbox{for all} \ \ \xi \in \mathbb{C}^3, \ \ x\in \Omega.
\end{equation}
 ${\rm curl}$, ${\rm div}$ are the vorticity operator and divergence operator respectively.

The above problem is one of the most fundamental linear systems in linear physics. It can be found in eletromagnetism. As noted in \cite{Picard, Saranen82}, the two systems are closely  related to the time-harmonic Maxwell equations for inhomogeneous anisotropic medium. System \eqref{1-1} with $\rho =0$  describes a magnetic field, while System \eqref{1-2} with $\BJ =0$ describes a static electric field. Hence the two systems are referred to as the equations of magnetostatics and electro-  repectively.  When $\Bsigma, \Bepsilon$ are identity matrices, the system can also be found in fluid mechanics. For example, the solvability of \eqref{1-1} can help us recover the velocity from the knowledge of vorticity.

Let us give a brief review of prior results. 
The classical results date back to \cite{ Duff, DS, Friedrichs, Gaffney}. The problem was studied in generalized setting of 
alternating differential forms on Riemannian manifolds. Picard\cite{Picard} and Saranen\cite{Saranen82, Saranen83} initiated the  research in Hilbert framework. They both proved the existence of solutions when $(\BJ, \rho, \lambda) \in L^2(\Omega) \times L^2(\Omega) \times H^{-\frac12}(\partial \Omega)$ for \eqref{1-1} and $(\BJ, \rho, \BLambda) \in L^2(\Omega) \times L^2(\Omega) \times H^{-\frac12}(\partial \Omega)$ for \eqref{1-2}. Inspired by the  related results in the  setting of differential forms\cite{Duff, DS, Friedrichs, Gaffney}, they analyzed the null spaces, which may be not zero, due to the topological structure of $\Omega$. They  gave the dimension of null spaces,  which are proved to be related with Betti numbers and independent of $\Bsigma$ and $\Bepsilon$. 

When $\Bsigma$ and $\Bepsilon$ are identity matrices, the solvability of \eqref{1-1} and \eqref{1-2}($\rho = 0$, $\BLambda = 0$) in $W^{m, p}$-spaces has been well studied, see \cite{AV, AS, KY}. For some other solvability results on Sobolev(Besov)-type domains, please refer to \cite{CS, MP}. However, the solvability in $W^{m, p}$-spaces for the general case, i. e., $\Bsigma$ and 
$\Bepsilon$ are symmetric positive definite matrices, is not clear. That is one goal of our paper. 

In addition to the solvability results, some estimates of solutions were derived. An inequality of Friedrichs\cite{Friedrichs} states that 
\begin{equation}\label{Friedrichs}
\| \Bu\|_{H^1(\Omega)} \leq C \left(   \|{\rm curl}~\Bu\|_{L^2(\Omega)} + \|{\rm div}~(\Bepsilon \Bu)\|_{L^2(\Omega)} + \| \Bu \|_{L^2(\Omega)}      \right),
\end{equation}
for all vector fields $\Bu \in L^2(\Omega)$ satisfying $\Bu \times \Bn =0$ on $\partial \Omega$. For the case that $\Bsigma$ and 
$\Bepsilon$ are identity matrices, similar inequalities were then derived in  $L^p$-spaces\cite{Wahl} and $C^\alpha$-spaces\cite{BW}. Recently, \cite{AS, KY} gave a unified version, 
\begin{equation}\label{Sobolev1}
\|\Bu\|_{W^{m, p}(\Omega)} \leq C \left(   \| \Bu\|_{L^p(\Omega)} + \|{\rm curl }~ \Bu \|_{W^{m-1, p}(\Omega)} + \|{\rm div}~ \Bu\|_{W^{m-1, p}(\Omega)} +
\| \Bu \cdot \Bn\|_{W^{m-\frac1p, p}(\partial \Omega)}               \right).
\end{equation}
\begin{equation}\label{Sobolev2}
\| \Bu \|_{W^{m, p}(\Omega)} \leq C \left(   \| \Bu\|_{L^p(\Omega)} + \|{\rm curl }~\Bu \|_{W^{m-1, p}(\Omega)} + \|{\rm div}~\Bu)\|_{W^{m-1, p}(\Omega)} +
\| \Bu \times \Bn\|_{W^{m-\frac1p, p}(\partial \Omega)}    \right).
\end{equation}
These inequalities give the estimates of $\nabla \Bu$ via ${\rm div}~\Bu$, ${\rm curl}~\Bu$ and boundary values. And they played an important role in the studies of  Navier-Stokes equations, Euler equations. 

When we study the regularity of solutions to Maxwell equations, we find that Friedrichs inequalities as \eqref{Sobolev1}-\eqref{Sobolev2} for the general case(i. e., $\Bsigma$ and $\Bepsilon$ are symmetric positive definite matrices) are required. Shen-Song\cite{SS} derived the following inequality on simply connected domain $\Omega$,
\begin{equation}\label{1-19}
\|\nabla \Bu\|_{L^p(\Omega)} \leq C \left( \|{\rm curl }~\Bu\|_{L^p(\Omega)} + \|{\rm div}~(\Bepsilon \Bu)\|_{L^p(\Omega)} +
\| \Bu \times \Bn\|_{W^{1-\frac1p, p}(\partial \Omega)}   \right).
\end{equation}
We try to get a more general version. That is another goal of our paper. 

There are also some generalizations to the mixed boundary value problem, see \cite{AA1, AA2}.

In this paper, we will show the solvability of \eqref{1-1} and \eqref{1-2} in $W^{m, p}$-spaces . Necessary and suffcient compatibility conditions  will be fully described.  As remarked above, the null spaces may be not zero. The fact will bring some difficulties. So we will give some further discussion for the null spaces. In particular, explicit bases of the two null spaces in $L^p$-framework are constructed.   

Some regularity estimates of solutions will be given in this paper. As a corollary, some  inequalities of Friedrichs type are derived:
\begin{equation}\label{1-20}
\|\Bu\|_{W^{m, p}(\Omega)} \leq C \left( \| \Bu\|_{L^p(\Omega)} + \|{\rm curl }~(\Bsigma \Bu )\|_{W^{m-1, p}(\Omega)} + \|{\rm div}~\Bu\|_{W^{m-1, p}(\Omega)} +
\| \Bu \cdot \Bn\|_{W^{m-\frac1p, p}(\partial \Omega)}   \right),
\end{equation}
and
\begin{equation}\label{1-21}
\| \Bu\|_{W^{m, p}(\Omega)} \leq C \left( \|\Bu\|_{L^p(\Omega)} + \|{\rm curl }~\Bu \|_{W^{m-1, p}(\Omega)} + \|{\rm div}~(\Bepsilon \Bu)\|_{W^{m-1, p}(\Omega)} +
\| \Bu \times \Bn\|_{W^{m-\frac1p, p}(\partial \Omega)}   \right).
\end{equation}
The above inequalities give the estimates of gradient via generalized ${\rm div}$, ${\rm curl}$ and boundary values. They generalize the inequalities \eqref{Sobolev1}-\eqref{Sobolev2}. 
As showed in \cite{SS}, the inequalities \eqref{1-20}- \eqref{1-21} may help to get the regularity estimates for solutions to Maxwell equations.

%In addition, we will apply the inequalities to derive some new regularity
%result for time-harmonic Maxwell equations with impedence boundary condition.

This paper is organized as follows: In Section 2, we will  state our main theorems and give some notations. In Section 3 and Section 4, solvability of problems \eqref{1-1} and \eqref{1-2} are proved respectively. Some related inequalities of Friedrichs type are derived. In Section 5, we give two decompositions of Sobolev spaces, which are designed for solvability of Maxwell equations.

%%%%%%%%%%%%%%%%%%%%%%%%%%%%%%%%%Preliminaries%%%%%%%%%%%%%%%%%%%%%%%%%%%%%%%%%%%%%%%%%%%%%%

\section{Preliminaries and Main Reuslts}
For simplicity of writing, we assume that $\Omega$ is a bounded domain in $\mathbb{R}^3$ with  $C^\infty$-boundary $\partial \Omega$, and

(1)\ \ $\partial \Omega$ consists of $(N_1+1)$ components $\Gamma_i$, $0\leq i \leq N_1$. $\Gamma_0$ is the boundary of the only unbounded connected component of $\mathbb{R}^3\setminus \overline{\Omega}$. 

(2)\ \ We do not assume that $\Omega$ is simply connected. There are $N_2$ $C^\infty$-surfaces $\Sigma_1, \cdots, \Sigma_{N_2}$, transversal to $\partial \Omega$ such that
\begin{equation}\nonumber
\Sigma_i \cap \Sigma_j = \varnothing,\ \ \ i \neq j,
\end{equation} and 
\begin{equation}\nonumber \displaystyle
\Omega^0= \Omega \setminus  \cup_{j =1}^{N_2} \Sigma_j \ \mbox{is a simply connected domain.}
\end{equation}

Before stating our main results,  let us introduce some notations. Let $L^p(\Omega)$ denote the usual scalar-valued and vector-valued $L^p$-space over $\Omega$, $1\leq p \leq \infty.$ Let
$$W^{m, p}(\Omega)= \{u \in L^{p}(\Omega):\ D^\alpha u \in L^p(\Omega),\ |\alpha| \leq m\},\ \ \ m \in \mathbb{N}.$$

Define the spaces:
$$L^p({\rm div}; \Omega) = \{ \Bu \in L^p(\Omega):\ {\rm div}~\Bu \in L^p(\Omega)   \},$$
$$L^p({\rm curl}; \Omega) = \{ \Bu \in L^p(\Omega):\ {\rm curl}~\Bu \in L^p(\Omega)  \}.$$

For every function $\Bv\in L^p( {\rm div}; \Omega)$, we denote $\Bv\cdot \Bn$ the normal boundary value of $\Bv$ defined in $W^{-\frac1p, p}(\partial \Omega)$,
\begin{equation}\nonumber
\forall \ \varphi \in W^{1, p^{\prime} }(\Omega), \ \ \ <\Bv\cdot \Bn,\ \varphi >_{\partial \Omega}
= \int_{\Omega}  \Bv \cdot \nabla \varphi \, dx + \int_{\Omega} {\rm div }~\Bv \cdot \varphi \, dx.
\end{equation}
And it holds that
\begin{equation}\label{normal}
\| \Bv \cdot \Bn \|_{W^{-\frac1p, p}(\partial \Omega)} \leq C \left(  \| \Bv\|_{L^p(\Omega)} + \| {\rm div}~\Bv\|_{L^p(\Omega)}           \right).
\end{equation}

For every function $\Bv\in L^p( {\rm curl}; \Omega)$, we denote $\Bv\times \Bn$ the tangential boundary value of $\Bv$ defined in $W^{-\frac1p, p}(\partial \Omega)$,
\begin{equation}\nonumber
\forall \ \Bvarphi \in W^{1, p^{\prime}}(\Omega), \ \ \ <\Bv\times \Bn,\ \Bvarphi >_{\partial \Omega}
= \int_{\Omega}  \Bv \cdot {\rm curl }~\Bvarphi \, dx - \int_{\Omega} {\rm curl }~\Bv \cdot \Bvarphi \, dx.
\end{equation}
And it holds that
\begin{equation}\label{tangential}
\| \Bv \times \Bn \|_{W^{-\frac1p, p}(\partial \Omega)} \leq C \left(  \| \Bv\|_{L^p(\Omega)} + \| {\rm curl}~\Bv\|_{L^p(\Omega)}           \right).
\end{equation}

Denote the tangential part of $\Bv$ on the boundary,
\begin{equation}\nonumber \Bv_T = \Bn \times (\Bv \times \Bn) = \Bv - (\Bv\cdot \Bn ) \Bn.
\end{equation}
And denote the tangential gradient of $\psi$ on the boundary,
\begin{equation}\nonumber
\nabla_T \psi = \nabla \psi- (\nabla \psi \cdot \Bn) \Bn .
\end{equation}
If $\boldsymbol{f}\in L^p(\partial \Omega)$ satisfies $\boldsymbol{f} \cdot \Bn =0$ on $\partial \Omega$,
we will use ${\rm div}_T\ ( \boldsymbol{f} )$ to denote the surface divergence of $ \boldsymbol{f}$ on $\partial \Omega$, defined by
\begin{equation}\nonumber  \forall \ \psi \in W^{1, p^{\prime}}(\partial \Omega) , \ \ \ 
<{\rm div}_T\ \boldsymbol{f},\ \psi>_{W^{-1, p}(\partial \Omega) \times W^{1, p^{\prime}}(\partial \Omega)}
= - \int_{\partial \Omega} \boldsymbol{f}  \cdot \nabla_T \psi \, dS .
\end{equation}

Define the null space $K_{T, \Bsigma}^p(\Omega)$:
\begin{equation}\nonumber
K_{T, \Bsigma}^p(\Omega) = \{  \Bu \in L^p(\Omega) : \ {\rm div}~\Bu = 0, \ {\rm curl}~(\Bsigma \Bu ) = 0, \ \mbox{in}\ \Omega,\ \mbox{and}\ \Bu \cdot \Bn = 0 \ \mbox{on}\ \partial \Omega                \}.
\end{equation}
In particular, denote
\begin{equation}\nonumber
K_{T, \Bsigma}(\Omega) = \{ \Bu \in C^\infty(\overline{\Omega}) : \ {\rm div}~\Bu = 0, \ {\rm curl}~(\Bsigma \Bu ) = 0, \ \mbox{in}\ \Omega,\ \mbox{and}\ \Bu \cdot \Bn = 0 \ \mbox{on}\ \partial \Omega         \}.
\end{equation}
The functions in $K_{T, \Bsigma}$ are called $\Bsigma$-harmonic fields of the magnetic type.

Define the null space $K_{N, \Bepsilon}^p(\Omega)$:
\begin{equation}\nonumber
K_{N, \Bepsilon}^p(\Omega) = \{  \Bu \in L^p(\Omega) : \ {\rm div}~(\Bepsilon \Bu) = 0, \ {\rm curl}~\Bu  = 0, \ \mbox{in}\ \Omega,\ \mbox{and}\ \Bu \times \Bn = 0 \ \mbox{on}\ \partial \Omega \}.
\end{equation}
In particular, denote
\begin{equation}\nonumber
K_{N, \Bepsilon}(\Omega) = \{ \Bu \in C^\infty(\overline{\Omega}) : \ {\rm div}~(\Bepsilon \Bu) = 0, \ {\rm curl}~ \Bu  = 0, \ \mbox{in}\ \Omega,\ \mbox{and}\ \Bu \times \Bn = 0 \ \mbox{on}\ \partial \Omega         \}.
\end{equation}
The functions in $K_{N, \Bepsilon}$ are called $\Bepsilon$-harmonic fields of the electric type.

In particular, when $\Bsigma = \Bepsilon = Id$, denote
\begin{equation}\nonumber
K_T^p(\Omega) = K_{T, Id}^p(\Omega), \ \ \ \ K_T(\Omega)= K_{T, Id}(\Omega),
\end{equation}
\begin{equation}\nonumber
 K_N^p(\Omega)= K_{N, Id}^p(\Omega),\  \ \ K_N(\Omega) = K_{N, Id}(\Omega).
\end{equation}

%%%%%%%%%%%%%%%%%%%%Main Results%%%%%%%%%%%%%%%%%%%%%%%%%%%%%%%%%%

\vspace{3mm}
Next, let us state our main results. Regarding the problem \eqref{1-1}, we have
\begin{theorem}\label{theorem1}
Let $m \in \mathbb{N}^*$, $1< p < + \infty$. Assume that $\BJ , \rho \in W^{m-1, p}(\Omega)$, $\lambda \in W^{m-\frac1p, p}(\partial \Omega)$. and $(\BJ, \rho, \lambda)$ satisfy the following compatibility conditions,
\begin{equation}\label{theorem1-1}
{\rm div}~\BJ = 0,\ \ \mbox{in}\ \Omega, \ \ \ <\BJ\cdot \Bn, \, 1>_{\Gamma_i} = 0,\ \ 0\leq i \leq N_1, \ \ \ \int_{\Omega} \rho \, dx = \int_{\partial \Omega} \lambda \, dS.
\end{equation}
Then there exists one solution $\Bu_0 \in W^{m, p}(\Omega)$ to the problem \eqref{1-1}, with the estimate
\begin{equation}\label{theorem1-2}
\|\Bu_0\|_{W^{m, p}(\Omega)} \leq C \left(  \|\BJ \|_{W^{m-1, p}(\Omega)} + \|\rho\|_{W^{m-1, p}(\Omega)} + \|\lambda \|_{W^{m- \frac1p, p}(\partial \Omega)} \right).
\end{equation}
Moreover, for every solution $\Bu \in L^p(\Omega)$ to the problem \eqref{1-1}, $\Bu$ can be represented as follows:
\begin{equation}\nonumber 
\Bu = \Bu_0 +  \Bh,\ \ \ \ 
\Bh\in K_{T, \Bsigma}^p(\Omega).
\end{equation}
\end{theorem}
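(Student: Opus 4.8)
The plan is to split the statement into three parts: (i) existence of a particular solution $\Bu_0$ with the claimed estimate, (ii) the a priori estimate \eqref{theorem1-2}, and (iii) the structure of the general solution in $L^p$. For (i), I would first reduce to the case $m=1$ by a bootstrap argument: once we know a solution exists and satisfies the $W^{1,p}$-estimate, the equations ${\rm curl}(\Bsigma\Bu)=\BJ$, ${\rm div}\,\Bu=\rho$ together with $\Bu\cdot\Bn=\lambda$ form (after introducing a potential, see below) an elliptic system whose higher regularity follows from the smoothness of $\Bsigma$ and standard $W^{m,p}$ elliptic estimates plus induction on $m$. The core existence step is the construction itself. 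I would proceed by potentials: first solve the Neumann-type problem for a scalar $\phi$ so that the ``${\rm div}$'' and normal-trace data are absorbed — i.e. find $\phi$ with an appropriate equation and $\p_\Bn\phi$-type condition producing $\operatorname{div}\nabla\phi$-part matching $\rho$ and $\lambda$ (using the solvability condition $\int_\Omega\rho\,dx=\int_{\partial\Omega}\lambda\,dS$) — and subtract it off. This reduces the problem to the case $\rho=0$, $\lambda=0$. Then for the reduced problem, the condition ${\rm div}\,\BJ=0$ with $\langle\BJ\cdot\Bn,1\rangle_{\Gamma_i}=0$ is exactly what is needed to write $\BJ={\rm curl}\,\Bw$ for some vector potential $\Bw\in W^{m,p}$ (this is the classical vector-potential theorem on a possibly non–simply-connected domain with the flux conditions through the $\Gamma_i$). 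Then I must solve ${\rm curl}(\Bsigma\Bu)={\rm curl}\,\Bw$, ${\rm div}\,\Bu=0$, $\Bu\cdot\Bn=0$.

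For this last reduced system I would look for $\Bu$ with $\Bsigma\Bu=\Bw+\nabla q$, so that the curl equation holds automatically; the unknown becomes the scalar $q$, and the remaining constraints ${\rm div}\,\Bu=0$ in $\Omega$, $\Bu\cdot\Bn=0$ on $\partial\Omega$ translate into
\[
{\rm div}\,\bigl(\Bsigma^{-1}(\Bw+\nabla q)\bigr)=0 \ \text{ in }\ \Omega,\qquad \Bsigma^{-1}(\Bw+\nabla q)\cdot\Bn=0\ \text{ on }\ \partial\Omega,
\]
which is a linear second-order elliptic equation for $q$ in divergence form with the smooth, uniformly elliptic coefficient matrix $\Bsigma^{-1}$ and a conormal (Neumann-type) boundary condition. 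Its solvability requires one scalar compatibility condition, $\int_\Omega {\rm div}(\Bsigma^{-1}\Bw)\,dx = \int_{\partial\Omega}\Bsigma^{-1}\Bw\cdot\Bn\,dS$, which holds by the divergence theorem; $q$ is then unique up to an additive constant and obeys $\|q\|_{W^{m+1,p}}\le C\|\Bw\|_{W^{m,p}}$ by Agmon–Douglis–Nirenberg estimates. Setting $\Bu_0=\Bsigma^{-1}(\Bw+\nabla q)$ (plus the $\phi$-correction undone earlier) gives a $W^{m,p}$ solution, and tracking the constants through the potential constructions yields \eqref{theorem1-2}.

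For (iii), given any $\Bu\in L^p(\Omega)$ solving \eqref{1-1}, set $\Bh=\Bu-\Bu_0$; then ${\rm div}\,\Bh=0$, ${\rm curl}(\Bsigma\Bh)=0$ in $\Omega$ and $\Bh\cdot\Bn=0$ on $\partial\Omega$ (the boundary identity makes sense in $W^{-1/p,p}(\partial\Omega)$ since $\Bh\in L^p(\operatorname{div};\Omega)$), so $\Bh\in K^p_{T,\Bsigma}(\Omega)$ by definition, which is exactly the asserted decomposition; conversely any such $\Bu_0+\Bh$ solves \eqref{1-1}. I expect the main obstacle to be the vector-potential step on the non–simply-connected domain: one must verify that the flux conditions $\langle\BJ\cdot\Bn,1\rangle_{\Gamma_i}=0$ are precisely the right (and sufficient) conditions to obtain $\BJ={\rm curl}\,\Bw$ with the correct $W^{m,p}$ bound, and to control the interaction between this construction and the cutting surfaces $\Sigma_j$; the elliptic solvability for $q$ and the regularity bootstrap are comparatively routine given the smoothness assumptions, though the bookkeeping of compatibility conditions must be done carefully so that no spurious obstruction is introduced when the $\rho,\lambda$-part is subtracted.
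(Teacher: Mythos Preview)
Your approach is correct and essentially matches the paper's: both invoke a vector potential for $\BJ$ (the paper's Lemma~\ref{lemma3-1}, using precisely the flux conditions $\langle\BJ\cdot\Bn,1\rangle_{\Gamma_i}=0$), write $\Bsigma\Bu_0=\Bpsi+\nabla q$, and then solve a single scalar conormal elliptic problem for $q$ with coefficient matrix $\Bsigma^{-1}$. The paper simply omits your preliminary $\phi$-step---it absorbs $\rho$ and $\lambda$ directly into the right-hand side of the $q$-equation, ${\rm div}(\Bsigma^{-1}\nabla q)=\rho-{\rm div}(\Bsigma^{-1}\Bpsi)$ with $\Bsigma^{-1}\nabla q\cdot\Bn=\lambda-\Bsigma^{-1}\Bpsi\cdot\Bn$---and reads off the $W^{m,p}$ estimate in one shot rather than by bootstrap.
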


\begin{remark}
Let us note that the compatibility conditions \eqref{theorem1-1} are sufficient and necessary. The necessarity of 
\begin{equation}\nonumber
{\rm div}~\BJ = 0,\ \ \ \mbox{in}\ \Omega, \ \ \ \ \ \mbox{and}\ \ \ \ \int_{\Omega} \rho \, dx = \int_{\partial \Omega}
\lambda\, dS
\end{equation}
is obvious. For $1\leq i \leq N_1$, let $\mu_i$ be a function in $C^\infty(\overline{\Omega})$, which is equal to $1$ in a neighbourhood of $\Gamma_i$ and vanishes in a neighbourhood of $\Gamma_k$, $k \neq i$, then 
\begin{equation}\nonumber
< \BJ \cdot \Bn, \, 1>_{\Gamma_i} = <{\rm curl}~(\mu_i \Bu_0) \cdot \Bn , \, 1>_{\partial \Omega} =0. 
\end{equation}
\end{remark}

\vspace{3mm}

As a corollary of Theorem \ref{theorem1}, we derive the following inequality of Friedrichs type.
\begin{theorem}\label{theorem2}
Let $m \in \mathbb{N}^*$, $1< p < + \infty$. Suppose that $\Bu \in L^p (\Omega)$ satisfies  ${\rm curl}~(\Bsigma \Bu ) \in W^{m-1, p}(\Omega)$, ${\rm div}~\Bu \in W^{m-1, p}(\Omega)$, and $\Bu \cdot \Bn \in W^{m - \frac1p, p}(\partial \Omega)$. Then we have $\Bu \in W^{m, p}(\Omega)$ with
\begin{equation}\label{theorem2-1}
\|\Bu\|_{W^{m, p}(\Omega)} \leq C \left( \|\Bu\|_{L^p(\Omega)} + \|{\rm curl}~(\Bsigma \Bu) \|_{W^{m-1, p}(\Omega)} + \|{\rm div}~\Bu\|_{W^{m-1, p}(\Omega)} + \| \Bu \cdot \Bn \|_{W^{m- \frac1p, p}(\partial \Omega)} \right).
\end{equation}
\end{theorem}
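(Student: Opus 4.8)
The plan is to derive the Friedrichs-type estimate directly from the solvability statement of Theorem \ref{theorem1} by a standard "solve the data, subtract, land in the null space" argument, combined with a finite-dimensionality property of $K_{T,\Bsigma}^p(\Omega)$. First I would set $\BJ = {\rm curl}~(\Bsigma \Bu)$, $\rho = {\rm div}~\Bu$, and $\lambda = \Bu \cdot \Bn$ (which lie in $W^{m-1,p}(\Omega)$, $W^{m-1,p}(\Omega)$, and $W^{m-\frac1p,p}(\partial\Omega)$ by hypothesis, using \eqref{normal} to make sense of the trace). One checks that these data automatically satisfy the compatibility conditions \eqref{theorem1-1}: ${\rm div}~\BJ = {\rm div}\,{\rm curl}~(\Bsigma\Bu) = 0$; the flux conditions $<\BJ\cdot\Bn,1>_{\Gamma_i} = 0$ follow exactly as in the Remark after Theorem \ref{theorem1} (take the cutoff $\mu_i$ and integrate by parts, since $\BJ = {\rm curl}$ of something); and $\int_\Omega \rho\,dx = \int_\Omega {\rm div}~\Bu\,dx = \int_{\partial\Omega}\Bu\cdot\Bn\,dS = \int_{\partial\Omega}\lambda\,dS$ by the divergence theorem. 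Hence Theorem \ref{theorem1} furnishes a particular solution $\Bu_0 \in W^{m,p}(\Omega)$ of \eqref{1-1} with these data and with the bound \eqref{theorem1-2}, i.e.
\begin{equation}\nonumber
\|\Bu_0\|_{W^{m,p}(\Omega)} \leq C\left( \|{\rm curl}~(\Bsigma\Bu)\|_{W^{m-1,p}(\Omega)} + \|{\rm div}~\Bu\|_{W^{m-1,p}(\Omega)} + \|\Bu\cdot\Bn\|_{W^{m-\frac1p,p}(\partial\Omega)}\right).
\end{equation}

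By the last clause of Theorem \ref{theorem1}, since $\Bu$ itself is an $L^p$-solution of \eqref{1-1} with the same data, the difference $\Bh := \Bu - \Bu_0$ belongs to $K_{T,\Bsigma}^p(\Omega)$. In particular $\Bu \in W^{m,p}(\Omega)$ once we know $\Bh \in W^{m,p}(\Omega)$, so a preliminary point is that elements of $K_{T,\Bsigma}^p(\Omega)$ are in fact smooth (this should already be available from the analysis of the null space earlier in the paper, as the null space has an explicit finite basis of smooth fields; alternatively it follows by bootstrapping, since a field in $K_{T,\Bsigma}^p$ has vanishing generalized div, curl and normal trace, and one applies Theorem \ref{theorem1} with zero data at each regularity level). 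It then remains to bound $\|\Bh\|_{W^{m,p}(\Omega)}$ by the right-hand side of \eqref{theorem2-1}. Since $K_{T,\Bsigma}^p(\Omega)$ is finite-dimensional and consists of smooth fields, all norms on it are equivalent, so $\|\Bh\|_{W^{m,p}(\Omega)} \leq C\|\Bh\|_{L^p(\Omega)} \leq C(\|\Bu\|_{L^p(\Omega)} + \|\Bu_0\|_{L^p(\Omega)})$, and $\|\Bu_0\|_{L^p(\Omega)}$ is already controlled by the displayed bound above. Adding $\|\Bu_0\|_{W^{m,p}(\Omega)} + \|\Bh\|_{W^{m,p}(\Omega)}$ and absorbing gives \eqref{theorem2-1}.

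The one genuine obstacle is the verification that $K_{T,\Bsigma}^p(\Omega)$ is finite-dimensional with all norms equivalent — equivalently, that an $L^p$ field with zero generalized divergence, zero generalized $\Bsigma$-curl, and zero normal trace is automatically smooth and lies in a fixed finite-dimensional space independent of $p$. If the promised description of the null spaces (explicit bases in $L^p$) is in hand from an earlier section, this is immediate: $K_{T,\Bsigma}^p(\Omega) = K_{T,\Bsigma}(\Omega)$ as sets, it is finite-dimensional, and norm equivalence on it is automatic. If one prefers a self-contained route, one can instead note that $\Bh \mapsto \Bh - $ (its $L^2$-projection onto $K_{T,\Bsigma}^2$) would be a solution of \eqref{1-1} with zero data and zero component in the null space, forcing it to vanish; this shows $K_{T,\Bsigma}^p \subseteq K_{T,\Bsigma}^2$, and elliptic regularity for the associated div-curl system upgrades $L^2$ membership to $C^\infty$. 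Either way, once finite-dimensionality and smoothness of the null space are granted, the rest of the argument is the routine subtraction-and-absorption above.
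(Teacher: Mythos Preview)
Your proposal is correct and follows essentially the same route as the paper: apply Theorem~\ref{theorem1} to the data $(\BJ,\rho,\lambda)=({\rm curl}(\Bsigma\Bu),{\rm div}\,\Bu,\Bu\cdot\Bn)$ to get $\Bu_0$, observe $\Bu-\Bu_0\in K_{T,\Bsigma}^p(\Omega)$, and control the null-space part in $W^{m,p}$ by its $L^p$ norm using the finite-dimensionality and smoothness of $K_{T,\Bsigma}^p(\Omega)=K_{T,\Bsigma}(\Omega)$ established in Theorems~\ref{lemma3-3} and~\ref{theorem3-3}. The only cosmetic difference is that the paper writes out the explicit basis expansion $\Bv=\sum_j\langle\Bv\cdot\Bn_j,1\rangle_{\Sigma_j}\,\Bsigma^{-1}\nabla^0 q_j^T$ and bounds the coefficients via \eqref{normal}, whereas you invoke the abstract equivalence of norms on a finite-dimensional space; these amount to the same thing.
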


\begin{remark}
If the domain $\Omega$ is simply connected, i.e., the Betti number $N_2=0$,  \eqref{1-1} admits at most one solution in this case. Then the $W^{m, p}$-estimate for $\Bu$ follows from \eqref{theorem1-2} directly,  and hence the term $\|\Bu\|_{L^p(\Omega)}$ on the right hand of \eqref{theorem2-1} can be omitted.
\end{remark}

\vspace{3mm}

Regarding the system \eqref{1-2}, we have
\begin{theorem}\label{theorem3}
Let $m \in \mathbb{N}^*$, $1< p < +\infty$.  Assume that $\BJ, \rho \in W^{m-1, p}(\Omega)$, $\BLambda \in W^{m - \frac1p, p}(\partial \Omega)$, and 
$(\BJ, \BLambda)$ satisfy the following compatibility conditions, 
\begin{equation}\label{theorem3-compatibility-1} 
{\rm div}~\BJ = 0,\ \ \ \mbox{in}\ \Omega, \ \ \ \ \BLambda \cdot \Bn = 0,\ \ \ \ \ \ \BJ \cdot \Bn = {\rm div}_T  \BLambda,\ \ \mbox{on}\ \partial \Omega, 
\end{equation}
\begin{equation}\label{theorem3-compatibility-2}
\int_\Omega \BJ \cdot \Bvarphi \, dx = - \int_{\partial \Omega} \BLambda \cdot \Bvarphi \, dS,\ \ \ \ \forall \ \Bvarphi \in K_T(\Omega).
\end{equation}
Then there exists one solution $\Bu_0 \in W^{m, p}(\Omega)$ to the problem \eqref{1-2}, with the estimate
\begin{equation}\label{theorem3-2}
\|\Bu_0\|_{W^{m, p}(\Omega)} \leq C \left(  \|\BJ \|_{W^{m-1, p}(\Omega)} + \|\rho\|_{W^{m-1, p}(\Omega)} + \|\BLambda \|_{W^{m- \frac1p, p}(\partial \Omega)} \right).
\end{equation}

Moreover, for every solution $\Bu \in L^p(\Omega)$ to the problem \eqref{1-2}, $\Bu$ can be represented as 
\begin{equation}\nonumber
\Bu = \Bu_0 + \Bh,\ \ \ \ \mbox{with}\ \ \Bh \in K_{N, \Bepsilon}^p(\Omega).
\end{equation}
\end{theorem}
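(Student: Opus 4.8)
The plan is to follow the same strategy that presumably underlies Theorem \ref{theorem1}, reducing the problem to scalar elliptic problems and invoking the null-space structure. First I would treat the interior ${\rm curl}$ equation: since ${\rm div}~\BJ = 0$ in $\Omega$ together with $\BJ \cdot \Bn = {\rm div}_T \BLambda$ on $\partial\Omega$ and the orthogonality \eqref{theorem3-compatibility-2} against $K_T(\Omega)$, standard theory for the problem ${\rm curl}~\Bv = \BJ$, ${\rm div}~\Bv = 0$ in $\Omega$, $\Bv \times \Bn = \BLambda$ on $\partial\Omega$ (this is exactly the $\Bepsilon = Id$ case of \eqref{1-2}, whose $W^{m,p}$-solvability is available from \cite{AV,AS,KY}) produces a particular $\Bv_0 \in W^{m,p}(\Omega)$ with the norm bound in terms of $\BJ$ and $\BLambda$. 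It remains to correct the divergence: I look for $\Bu_0 = \Bv_0 + \na\varphi$ where $\varphi$ solves the Neumann-type problem ${\rm div}~(\Bepsilon\na\varphi) = \rho - {\rm div}~(\Bepsilon\Bv_0)$ in $\Omega$ with $\na\varphi \times \Bn = 0$ on $\partial\Omega$, i.e. $\varphi$ constant on each component $\Gamma_i$; adding $\na\varphi$ does not disturb ${\rm curl}~\Bu_0 = \BJ$ nor, after adjusting the boundary constants, the condition $\Bu_0 \times \Bn = \BLambda$. The elliptic regularity $\|\varphi\|_{W^{m+1,p}} \le C\,\|\rho - {\rm div}~(\Bepsilon\Bv_0)\|_{W^{m-1,p}}$ for the operator ${\rm div}~(\Bepsilon\,\cdot\,)$ with these mixed boundary conditions gives \eqref{theorem3-2} once one checks the solvability condition $\int_\Omega(\rho - {\rm div}~(\Bepsilon\Bv_0))\,dx = 0$ for the Neumann problem on each component, which follows from $\int_\Omega {\rm div}~(\Bepsilon\na\varphi)\,dx$ matching the flux through the constant-potential boundary pieces — the compatibility conditions are designed exactly so this closes.

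For the representation of a general solution, let $\Bu \in L^p(\Omega)$ solve \eqref{1-2}; then $\Bh := \Bu - \Bu_0$ satisfies ${\rm curl}~\Bh = 0$, ${\rm div}~(\Bepsilon\Bh) = 0$ in $\Omega$, and $\Bh\times\Bn = 0$ on $\partial\Omega$, which is precisely the statement $\Bh \in K_{N,\Bepsilon}^p(\Omega)$. Conversely any such $\Bh$ added to $\Bu_0$ gives another solution, so the solution set is exactly the coset $\Bu_0 + K_{N,\Bepsilon}^p(\Omega)$; no further work is needed here beyond unwinding the definitions.

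The main obstacle I anticipate is not the ${\rm curl}$-step (which is the already-known $Id$-case) but the elliptic correction for the \emph{variable-coefficient, non-symmetric-looking} operator ${\rm div}~(\Bepsilon\na\,\cdot\,)$ on a multiply connected domain with boundary conditions $\varphi|_{\Gamma_i} = c_i$: one must produce $W^{m+1,p}$ regularity (Agmon–Douglis–Nirenberg-type estimates for this mixed problem with $C^\infty$ coefficients and $C^\infty$ boundary) and simultaneously choose the free constants $c_i$ so that $\na\varphi\times\Bn$ carries the right boundary data compatible with $\BLambda$; this is where the conditions \eqref{theorem3-compatibility-1}–\eqref{theorem3-compatibility-2}, the Betti-number bookkeeping via $K_T(\Omega)$, and the interplay between the cuts $\Sigma_j$ and the surfaces $\Gamma_i$ all have to be used carefully. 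A secondary technical point is verifying that $\Bu_0$ has the claimed regularity up to the boundary rather than merely in the interior, which should follow by combining the two regularity inputs. I expect the bulk of the writing to be in checking these compatibility/flux identities rather than in any single deep estimate.
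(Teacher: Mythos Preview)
Your two-step strategy --- first solve the $\Bepsilon = Id$ problem ${\rm curl}~\Bv = \BJ$, ${\rm div}~\Bv = 0$, $\Bv\times\Bn = \BLambda$, then correct by a gradient $\nabla q$ with $q$ constant on each $\Gamma_i$ --- is exactly the paper's approach. The representation of general solutions as $\Bu_0 + K_{N,\Bepsilon}^p(\Omega)$ is also handled just as you describe.

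However, you have the difficulties inverted. The elliptic correction step you flag as the ``main obstacle'' is in fact trivial: the paper simply imposes the homogeneous Dirichlet condition $q=0$ on \emph{all} of $\partial\Omega$, which is a particular choice of ``$q$ constant on each $\Gamma_i$'' and automatically gives $\nabla q \times \Bn = 0$. This is a standard Dirichlet problem for the uniformly elliptic operator ${\rm div}(\Bepsilon\nabla\cdot)$, with no solvability or flux condition whatsoever --- your anticipated compatibility check $\int_\Omega(\rho - {\rm div}(\Bepsilon\Bv_0))\,dx = 0$ is simply not needed, and there are no free constants $c_i$ to adjust. All the Betti-number and $K_T(\Omega)$ bookkeeping lives entirely in the \emph{first} step.

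Conversely, the first step is where the real work is: the $W^{m,p}$-solvability of ${\rm curl}~\Bv = \BJ$, ${\rm div}~\Bv = 0$, $\Bv\times\Bn = \BLambda$ with \emph{inhomogeneous} tangential data $\BLambda$ is not directly available from \cite{AV,AS,KY} (the first is only $L^2$, the latter two treat $\BLambda = 0$). The paper devotes Subsection~4.1 to establishing this via a generalized Lax--Milgram/inf--sup argument on the space $V_T^p(\Omega)$, and this is precisely where the compatibility conditions \eqref{theorem3-compatibility-1}--\eqref{theorem3-compatibility-2} are consumed. So your outline is right, but budget your effort accordingly.
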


\begin{remark} 
The compatibility conditions \eqref{theorem3-compatibility-1}-\eqref{theorem3-compatibility-2} were proposed by Alonso-Valli\cite{AV}. Let us note that they are sufficient and necessary.  The necessarity of 
\begin{equation}\nonumber
{\rm div}~ \BJ = 0,\ \ \ \mbox{in}\ \Omega,\ \ \ \ \mbox{and}\ \ \ \ \BLambda \cdot \Bn = 0,\ \ \ \mbox{on}\ \partial \Omega
\end{equation} 
is obvious.  And $\BJ \cdot \Bn = {\rm div}_T \BLambda $ on $\partial \Omega$ is implied by the following formula\cite{Monk},
\begin{equation}\nonumber
{\rm curl}~\Bu_0 \cdot \Bn = {\rm div}_T~ ( \Bu_0 \times \Bn) , \ \ \ \mbox{on}\ \partial \Omega.
\end{equation}

Moreover, $\forall \ \Bvarphi \in K_{T}(\Omega)$, 
\begin{equation}\nonumber
\int_{\Omega} {\rm curl}~\Bu_0 \cdot \Bvarphi \, dx = - \int_{\partial \Omega} (\Bu_0 \times \Bn) \cdot \Bvarphi \, dS,  
\end{equation}
which implies that 
\begin{equation}\label{4-3-25}
\int_{\Omega} \BJ \cdot \Bvarphi \, dx = -\int_{\partial \Omega } \BLambda \cdot \Bvarphi \, dS. 
\end{equation}

By the way, as explained in Remark \ref{remark-compatibility-2}, the compatibility condition \eqref{theorem3-compatibility-2} can be replaced by 
\begin{equation}\nonumber
< \BJ \cdot \Bn_j,\, 1>_{\Sigma_j} = \int_{\partial \Sigma_j} (\Bn \times \BLambda ) \cdot \Btau_j \, dl , 
\ \ \ \ \ 1\leq j \leq N_2,
\end{equation}
where $\Bn_j$ is the unit normal on $\Sigma_j$, and $\Btau_j$ is the unit tangential vector of $\partial \Sigma_j$. 
\end{remark}

\vspace{3mm}
Similarly, we get a new inequality of Friedrichs type as a corollary.
\begin{theorem}\label{theorem4} 
Let $m \in \mathbb{N}^*$, $1< p < + \infty$. 
Suppose that $\Bu \in L^p (\Omega)$ with ${\rm curl}~ \Bu  \in W^{m-1, p}(\Omega)$, ${\rm div}~(\Bepsilon \Bu) \in W^{m-1, p}(\Omega)$, and $\Bu \times \Bn \in W^{m - \frac1p, p}(\partial \Omega)$. Then we have $\Bu \in W^{m, p}(\Omega)$ with

\begin{equation}\label{theorem4-1}
\|\Bu\|_{W^{m, p}(\Omega)} \leq C \left( \|\Bu\|_{L^p(\Omega)} + \|{\rm curl }~\Bu \|_{W^{m-1, p}(\Omega)} + \|{\rm div}~(\Bepsilon \Bu)\|_{W^{m-1, p}(\Omega)} + \| \Bu \times \Bn\|_{W^{m- \frac1p, p}(\partial \Omega)} \right).
\end{equation}

\end{theorem}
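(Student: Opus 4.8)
The plan is to derive Theorem \ref{theorem4} as a corollary of Theorem \ref{theorem3}, exactly in the spirit of how Theorem \ref{theorem2} follows from Theorem \ref{theorem1}. Given $\Bu \in L^p(\Omega)$ with $\BJ := {\rm curl}~\Bu \in W^{m-1,p}(\Omega)$, $\rho := {\rm div}~(\Bepsilon\Bu) \in W^{m-1,p}(\Omega)$ and $\BLambda := \Bu\times\Bn \in W^{m-\frac1p,p}(\partial\Omega)$, the first task is to check that the triple $(\BJ, \rho, \BLambda)$ automatically satisfies the compatibility conditions \eqref{theorem3-compatibility-1}--\eqref{theorem3-compatibility-2}. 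This is precisely the content of the Remark following Theorem \ref{theorem3}: ${\rm div}~\BJ = {\rm div}~{\rm curl}~\Bu = 0$; $\BLambda\cdot\Bn = (\Bu\times\Bn)\cdot\Bn = 0$; the identity $\BJ\cdot\Bn = {\rm curl}~\Bu\cdot\Bn = {\rm div}_T(\Bu\times\Bn) = {\rm div}_T\BLambda$ holds by the surface formula of \cite{Monk}; and for every $\Bvarphi \in K_T(\Omega)$ one has $\int_\Omega {\rm curl}~\Bu\cdot\Bvarphi\,dx = -\int_{\partial\Omega}(\Bu\times\Bn)\cdot\Bvarphi\,dS$ (integration by parts against a curl-free, divergence-free, tangential field), which is \eqref{theorem3-compatibility-2}. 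One subtlety is that these identities, proved in the Remark for smooth $\Bu_0$, must be justified for a general $\Bu \in L^p(\Omega)$ with ${\rm curl}~\Bu \in W^{m-1,p} \subset L^p$; this is handled by the definitions of $\Bu\times\Bn$ in $W^{-\frac1p,p}(\partial\Omega)$ and of ${\rm div}_T$ given in Section 2, plus a density argument approximating $\Bu$ in $L^p({\rm curl};\Omega)$ by smooth fields.

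Once the compatibility conditions are verified, apply Theorem \ref{theorem3} with this data: since $\BJ,\rho \in W^{m-1,p}(\Omega)$ and $\BLambda \in W^{m-\frac1p,p}(\partial\Omega)$, there exists $\Bu_0 \in W^{m,p}(\Omega)$ solving \eqref{1-2} with the same $(\BJ,\rho,\BLambda)$, and satisfying the estimate \eqref{theorem3-2}, namely $\|\Bu_0\|_{W^{m,p}(\Omega)} \le C(\|\BJ\|_{W^{m-1,p}(\Omega)} + \|\rho\|_{W^{m-1,p}(\Omega)} + \|\BLambda\|_{W^{m-\frac1p,p}(\partial\Omega)})$. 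By the representation clause of Theorem \ref{theorem3}, since $\Bu$ itself is an $L^p$-solution of \eqref{1-2} with this data, we get $\Bu = \Bu_0 + \Bh$ with $\Bh \in K_{N,\Bepsilon}^p(\Omega)$. In particular $\Bu \in W^{m,p}(\Omega)$, because $\Bu_0 \in W^{m,p}$ and $\Bh$ — a smooth harmonic field, or at least an element of a finite-dimensional space of smooth fields — is automatically in $W^{m,p}(\Omega)$.

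It remains to produce the stated estimate. From $\Bu = \Bu_0 + \Bh$ we have $\|\Bu\|_{W^{m,p}} \le \|\Bu_0\|_{W^{m,p}} + \|\Bh\|_{W^{m,p}}$. The first term is controlled by \eqref{theorem3-2}, i.e.\ by the curl, div and boundary data, which are the right-hand-side quantities of \eqref{theorem4-1}. For the second, $\Bh = \Bu - \Bu_0 \in K_{N,\Bepsilon}^p(\Omega)$, a finite-dimensional space on which all norms are equivalent, so $\|\Bh\|_{W^{m,p}(\Omega)} \le C\|\Bh\|_{L^p(\Omega)} \le C(\|\Bu\|_{L^p(\Omega)} + \|\Bu_0\|_{L^p(\Omega)}) \le C(\|\Bu\|_{L^p(\Omega)} + \|\BJ\|_{W^{m-1,p}(\Omega)} + \|\rho\|_{W^{m-1,p}(\Omega)} + \|\BLambda\|_{W^{m-\frac1p,p}(\partial\Omega)})$, again using \eqref{theorem3-2}. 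Substituting $\BJ = {\rm curl}~\Bu$, $\rho = {\rm div}~(\Bepsilon\Bu)$, $\BLambda = \Bu\times\Bn$ gives \eqref{theorem4-1}. (As in the Remark after Theorem \ref{theorem2}, when $\Omega$ is simply connected the space $K_{N,\Bepsilon}^p$ may still be nontrivial because of the $N_1$ boundary components, so the $\|\Bu\|_{L^p}$ term on the right is genuinely needed in general; it can be dropped only when $K_{N,\Bepsilon}^p(\Omega) = \{0\}$.)

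The main obstacle is not the structure of the argument — which is a direct transcription of the Theorem \ref{theorem1} $\Rightarrow$ Theorem \ref{theorem2} pattern — but the rigorous verification of the compatibility conditions \eqref{theorem3-compatibility-1}--\eqref{theorem3-compatibility-2} for a merely $L^p$ field $\Bu$, since the identities ${\rm curl}~\Bu\cdot\Bn = {\rm div}_T(\Bu\times\Bn)$ and the orthogonality relation against $K_T(\Omega)$ were stated in the Remark only for smooth $\Bu_0$. One must phrase everything weakly: $\Bu\times\Bn \in W^{-\frac1p,p}(\partial\Omega)$ a priori, and the hypothesis $\Bu\times\Bn \in W^{m-\frac1p,p}$ upgrades its regularity so that ${\rm div}_T(\Bu\times\Bn)$ makes sense and the required trace identities pass to the limit under $L^p({\rm curl};\Omega)$-approximation by $C^\infty(\overline\Omega)$ fields. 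Once this bookkeeping is in place, Theorem \ref{theorem3} does all the real work.
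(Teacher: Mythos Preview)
Your proposal is correct and follows essentially the same route as the paper: set $\BJ={\rm curl}\,\Bu$, $\rho={\rm div}(\Bepsilon\Bu)$, $\BLambda=\Bu\times\Bn$, invoke Theorem~\ref{theorem3} to produce $\Bu_0$, and control the difference $\Bh=\Bu-\Bu_0\in K_{N,\Bepsilon}^p(\Omega)$. The only cosmetic difference is that the paper estimates $\|\Bh\|_{W^{m,p}}$ via the explicit basis $\{\nabla q_i^N\}$ of Theorem~\ref{theorem4.3} together with the trace inequality~\eqref{normal}, whereas you appeal directly to equivalence of norms on the finite-dimensional space $K_{N,\Bepsilon}^p(\Omega)=K_{N,\Bepsilon}(\Omega)$; both arrive at $\|\Bh\|_{W^{m,p}}\le C(\|\Bu\|_{L^p}+\|\Bu_0\|_{L^p})$, and your more careful discussion of the compatibility conditions for non-smooth $\Bu$ is a point the paper simply takes for granted.
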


\begin{remark}
If $\partial \Omega$ has only one part $\Gamma_0$, i.e.,the Betti number $N_1=0$,  \eqref{1-2} admits at most one solution in this case. The $W^{m, p}$-estimate for $\Bu$ follows from \eqref{theorem3-2} directly, and hence the term $\|\Bu\|_{L^p(\Omega)} $ on the right hand of \eqref{theorem4-1} can be omitted. 
\end{remark}

\begin{remark}
In fact, Shen-Song\cite{SS} has derived the $W^{1, p}$-estimate of $\Bu$. However, they did not talk about the solvability and the domain they considered is simply connected.  Here we consider a general case, i.e., $\Omega$ is a multiply connected domain. It requires more analysis of the compatibility conditions and the null space. 
\end{remark}

%%%%%%%%%%%%%%%%%%%%%%%%%%%%%%%%Section 3%%%%%%%%%%%%%%%%%%%%%%%%%%%%%%%%

\section{Proof of Theorems \ref{theorem1}- \ref{theorem2}}

\subsection{Proof of Theorem \ref{theorem1}} In this subsection, we will give the proof of Theorem \ref{theorem1}. 
Let $\Bv = \Bsigma \Bu$. In fact, we consider the following problem for $\Bv$,
\begin{equation}\label{3-1}
\left\{\begin{array}{l}
{\rm curl}~ \Bv = \BJ,\ \ \ \mbox{in}\ \Omega,\\[2mm]
{\rm div}~(\Bsigma^{-1} \Bv )  = \rho,\ \ \ \mbox{in}\ \Omega, \\[2mm]
\Bsigma^{-1} \Bv \cdot \Bn = \lambda, \ \ \ \mbox{on}\ \partial\Omega.
\end{array}
\right.
\end{equation}
The basic idea of the proof is to change \eqref{3-1} into one  standard elliptic equation of divergence form. The first step is to remove $\BJ$. Then the second step is to find one special solution of gradient form for the new problem. To accomplish the first step, let us introduce  one preliminary lemma.

\begin{lemma}\label{lemma3-1} A vector  field  $ \Bz \in L^p({\rm div}; \Omega)$ satisfies
\begin{equation}\nonumber
{\rm div}~\Bz = 0, \ \ \ \mbox{in}\ \ \Omega, \ \ \ \mbox{and}\ \ <\Bz \cdot \Bn, 1>_{\Gamma_i}=0, \ \ 0\leq i \leq N_1,
\end{equation}
if and only if there exists a vector potential $\Bpsi$ in $W^{1, p}(\Omega)$ such that
\begin{equation}\nonumber
\Bz = {\rm curl }~\Bpsi,\ \ \ \mbox{and} \ \ \ {\rm div}~\Bpsi = 0 \ \ \mbox{in}\ \Omega,
\end{equation}
\begin{equation}\nonumber
 \ \ \Bpsi \cdot \Bn = 0,\ \ \mbox{on}\ \partial \Omega, \ \ \ \ < \Bpsi\cdot \Bn, \ 1>_{\Sigma_j} = 0, \ 1\leq j \leq N_2.
\end{equation}
This function $\Bpsi$ is unique and we have the estimate:
\begin{equation}\label{3-5}
\|\Bpsi \|_{W^{1, p}(\Omega)} \leq C \| \Bz \|_{L^p(\Omega)},
\end{equation}
where $C>0$ depends only on $p$ and $\Omega$. Moreover, if $\Bz \in W^{m-1, p}(\Omega)$, $m\geq 2$, we have the estimate
\begin{equation}\label{3-6}
\|\Bpsi \|_{W^{m, p}(\Omega)} \leq C \| \Bz \|_{W^{m-1, p}(\Omega)}.
\end{equation}
\end{lemma}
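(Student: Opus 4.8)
\textbf{Proof strategy for Lemma \ref{lemma3-1}.}

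The plan is to treat the two implications separately, with the forward direction (existence of the potential $\Bpsi$) being the substantive part. For the converse, if $\Bz = \curl \Bpsi$ then automatically $\div \Bz = 0$, and for each boundary component $\Gamma_i$ we pick a cutoff $\mu_i \in C^\infty(\ol\Omega)$ equal to $1$ near $\Gamma_i$ and $0$ near the other components (exactly as in Remark 2.2), so that $<\Bz\cdot\Bn, 1>_{\Gamma_i} = <\curl(\mu_i\Bpsi)\cdot\Bn,1>_{\partial\Omega}$, which vanishes by the divergence theorem since $\div\curl \equiv 0$. This direction needs no regularity beyond $\Bpsi \in W^{1,p}$.

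For the forward direction, I would construct $\Bpsi$ as the solution of a vector Laplace-type boundary value problem. Given $\Bz$ with $\div\Bz = 0$ and the vanishing flux conditions, consider the system $-\Delta\Bpsi = \curl\Bz$ (understood appropriately), or more transparently, look for $\Bpsi$ solving $\curl\Bpsi = \Bz$, $\div\Bpsi = 0$ in $\Omega$, $\Bpsi\cdot\Bn = 0$ on $\partial\Omega$, together with the $N_2$ side constraints $<\Bpsi\cdot\Bn,1>_{\Sigma_j} = 0$. The standard route is to introduce the space of vector potentials with normal boundary trace zero, use the div-curl-grad elliptic estimate (the Friedrichs/Gaffney inequality for the normal boundary condition — this is the $\Bsigma = Id$ case, which is classical and may be cited from \cite{AS, KY}) to get that $\Bpsi \mapsto (\curl\Bpsi, \div\Bpsi, \Bpsi\cdot\Bn)$ plus the finitely many functionals $<\Bpsi\cdot\Bn,1>_{\Sigma_j}$ is an isomorphism onto its range, and then check that $(\Bz, 0, 0, 0,\dots,0)$ lies in that range precisely because of the stated compatibility conditions. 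Concretely: solve the elliptic problem, obtain some $\Bpsi$; then $\curl\Bpsi = \Bz$ will hold up to an element of the relevant harmonic field space $K_N$ (vector fields with $\curl = 0$, $\div = 0$, $\Bpsi\cdot\Bn = 0$), and the $N_1$ flux conditions on $\Bz$ together with the $N_2$ conditions on $\Bpsi$ kill this ambiguity, pinning down $\Bpsi$ uniquely. Uniqueness itself follows because the difference of two such potentials lies in $K_N(\Omega)$ and also has all its $\Sigma_j$-periods zero, hence is zero by the standard characterization of that finite-dimensional space (its dimension being the Betti number $N_2$).

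The estimates \eqref{3-5} and \eqref{3-6} would come from the elliptic regularity for the underlying Laplace/div-curl system: the $W^{1,p}$ bound from the first-order estimate with $\Bz \in L^p$, and the higher $W^{m,p}$ bound by bootstrapping — since $\curl\Bpsi = \Bz \in W^{m-1,p}$, $\div\Bpsi = 0 \in W^{m-1,p}$, and $\Bpsi\cdot\Bn = 0$ is smooth, inequality \eqref{Sobolev1} (with $\Bsigma = Id$) applied inductively gives $\Bpsi \in W^{m,p}$ with the claimed bound, after absorbing the lower-order term $\|\Bpsi\|_{L^p}$ using the already-established $W^{1,p}$ estimate and the uniqueness.

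I expect the main obstacle to be the careful handling of the topology: verifying that the compatibility conditions on $\Bz$ (the $N_1$ flux conditions through the $\Gamma_i$) are exactly what is needed to correct the solution of the elliptic problem into a genuine potential with $\curl\Bpsi = \Bz$, and dually that imposing the $N_2$ conditions $<\Bpsi\cdot\Bn,1>_{\Sigma_j} = 0$ is exactly enough to restore uniqueness. This is the bookkeeping of de Rham / Hodge theory for manifolds with boundary, and the counting must match: $N_1$ constraints on the data, $N_2$ normalizations on the potential. One must also be slightly careful that the flux functionals $<\Bz\cdot\Bn,1>_{\Gamma_i}$ and the period functionals $<\Bpsi\cdot\Bn,1>_{\Sigma_j}$ are well-defined for merely $L^p(\div;\Omega)$ fields, which is handled by the trace definition and estimate \eqref{normal} recorded in Section 2.
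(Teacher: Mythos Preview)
Your proposal is correct and in fact goes well beyond what the paper does: the paper does not prove this lemma at all but simply cites it as Theorem~4.1 of \cite{AS} for the existence of $\Bpsi$ and the $W^{1,p}$ estimate \eqref{3-5}, and then remarks that the higher-order estimate \eqref{3-6} follows from inequality \eqref{Sobolev1} (the $\Bsigma = Id$ Friedrichs inequality), exactly as in your final paragraph. Your sketch of the construction via a div--curl boundary value problem with the normal trace condition, together with the topology bookkeeping ($N_1$ flux conditions on $\Bz$ for existence, $N_2$ normalizations on $\Bpsi$ for uniqueness), is in the spirit of what \cite{AS} and \cite{ABDG} actually do, so you are effectively reconstructing the cited argument rather than offering a different one.

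One notational slip worth fixing: the finite-dimensional space in which the difference of two candidate potentials lands --- curl-free, divergence-free, with vanishing \emph{normal} trace --- is what this paper calls $K_T(\Omega)$, not $K_N(\Omega)$ (the paper reserves $K_N$ for the space with vanishing \emph{tangential} trace). Its dimension is indeed $N_2$ and it is annihilated by the $\Sigma_j$-period conditions, so your uniqueness argument is sound once the label is corrected. Also, the phrase ``$\curl\Bpsi = \Bz$ will hold up to an element of the relevant harmonic field space'' is slightly misstated: once $\Bpsi$ is constructed, $\curl\Bpsi = \Bz$ holds exactly; it is $\Bpsi$ itself that is determined only up to an element of $K_T(\Omega)$ until the $\Sigma_j$-periods are fixed.
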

The existence of the vector potential $\Bpsi$ and regularity estimate \eqref{3-5} is proved in \cite{AS}. It is exactly Theorem 4.1 in \cite{AS}. The high-order estimate \eqref{3-6} is a result
of the inequality \eqref{Sobolev1}.

\begin{proof}[\bf Proof of Theorem \ref{theorem1}]   Due to the compatibility condition \eqref{theorem1-1},
\begin{equation}\nonumber
{\rm div}~\BJ = 0, \ \ \mbox{in}\ \Omega, \ \ \ \ \mbox{and}\ \ \ < \BJ\cdot \Bn, \ 1>_{\Gamma_i} = 0, \ \ 0\leq i \leq N_1.
\end{equation}
It follows from Lemma \ref{lemma3-1} that  there exists a vector potential $\Bpsi \in W^{m, p}(\Omega)$, such that
\begin{equation}\nonumber
\BJ = {\rm curl }~ \Bpsi, \ \ \ {\rm div}~\Bpsi = 0, \ \ \mbox{in}\ \Omega, \ \ \ \mbox{and}\ \ \Bpsi \cdot \Bn = 0, \ \ \mbox{on}\ \partial \Omega.
\end{equation}
Furthermore,
\begin{equation}\label{3-10}
\|\Bpsi \|_{W^{m, p}(\Omega)} \leq C \|\BJ \|_{W^{m-1, p}(\Omega)}.
\end{equation}

Next, we plan to search for one solution of the following system:
\begin{equation}\label{3-11}
\left\{ \begin{array}{l}
{\rm curl }~\Bw = 0, \ \ \ \mbox{in}\ \  \Omega, \\[2mm]
{\rm div}~(\Bsigma^{-1} \Bw )  = \rho - {\rm div}~(\Bsigma^{-1} \Bpsi),\ \ \ \mbox{in}\ \ \Omega,\\[2mm]
\Bsigma^{-1} \Bw \cdot \Bn= \lambda - \Bsigma^{-1} \Bpsi \cdot \Bn,\ \ \ \mbox{on}\ \ \partial \Omega.
\end{array}\right.
\end{equation}
$\Omega$ is a multiply connected domain, ${\rm curl}~\Bw= 0 $ does not imply $\Bw$ is a gradient. However,  here we are searching for  one particular solution which assumed to be a gradient. Consider the
following problem,
\begin{equation}\label{3-12} \left\{ \begin{array}{l}
{\rm div}~(\Bsigma^{-1} \nabla q) = \rho - {\rm div}~(\Bsigma^{-1} \Bpsi), \ \ \ \mbox{in} \ \Omega,\\ [2mm]
\Bsigma^{-1} \nabla q \cdot \Bn = \lambda - \Bsigma^{-1} \Bpsi \cdot \Bn,\ \ \ \mbox{on}\ \partial \Omega.
\end{array}
\right.
\end{equation}
Note that
\begin{equation}\nonumber
\int_{\Omega} \left[ \rho - {\rm div}~(\Bsigma^{-1} \Bpsi )  \right] \, dx = \int_{\partial \Omega} (\lambda - \Bsigma^{-1} \Bpsi \cdot \Bn ) \, dS,
\end{equation}
due to the compatibility condition \eqref{theorem1-1}. Applying the classical theory for elliptic equation with conormal boundary condition\cite{Lieberman},  there exists one unique solution (modulus one constant) $q \in W^{m, p}(\Omega)$,  such that
\begin{equation}\label{3-14} \begin{array}{ll}
\|\nabla q\|_{W^{m, p}(\Omega)} &
\leq C \left(  \left\|\rho - {\rm div}(\Bsigma^{-1}\Bpsi) \right\|_{W^{m-1, p}(\Omega)} + \left\|\lambda - \Bsigma^{-1} \Bpsi \cdot \Bn \right\|_{W^{m -\frac1p, p}(\partial \Omega)}              \right)\\[3mm]
& \leq C \left(    \| \rho \|_{W^{m-1, p}(\Omega)} + \|\lambda \|_{W^{m - \frac1p, p}(\partial \Omega)} + \|\Bpsi \|_{W^{m, p}(\Omega)}             \right)\\[3mm]
& \leq C \left(  \| \rho \|_{W^{m-1, p}(\Omega)} + \|\lambda \|_{W^{m - \frac1p, p}(\partial \Omega)} + \| \BJ \|_{W^{m - 1, p}(\Omega)}          \right),
\end{array}
\end{equation}
where the second inequality is due to trace theorem for Sobolev functions and the last inequality is due to \eqref{3-10}.

Let 
\begin{equation}\nonumber
\Bv_0= \nabla q + \Bpsi,\ \ \ \ \mbox{and}\ \ \ \  \Bu_0 = \Bsigma^{-1} \Bv_0, 
\end{equation}
 it is easy to check that $\Bu_0$ is a solution to \eqref{1-1}. And
\begin{equation}\label{3-15} \begin{array}{ll}
\|\Bu_0\|_{W^{m, p}(\Omega)} & \leq C \|\nabla q + \Bpsi \|_{W^{m, p}(\Omega)}\\[2mm]
& \leq
C \left(  \| \rho \|_{W^{m-1, p}(\Omega)} + \|\lambda \|_{W^{m - \frac1p, p}(\partial \Omega)} + \| \BJ \|_{W^{m - 1, p}(\Omega)}          \right).
\end{array}
\end{equation}
That ends the proof of Theorem \ref{theorem1}.

\end{proof}

\begin{remark}
When $\Omega$ is of class $C^{m+1}$, and $\Bsigma \in C^m(\overline{\Omega})$, the result in  Theorem \ref{theorem1} also holds. 
\end{remark}

%%%%%%%%%%%%%%%%%%%%%%%%%%%%%%%%%%%%%Null space K_T%%%%%%%%%%%%%%%%%%%%%%%%%%%%%%%%%%%%%%
\subsection{The null space $K_{T, \Bsigma}^p(\Omega)$}

To get some more knowledge about the solutions to \eqref{1-1},  we will study the null space $K_{T, \Bsigma}^p(\Omega)$ in this subsection. One particular basis will be given. The characterization of $K_{T, \Bsigma}^p(\Omega)$ is inspired by that of $K_T^p(\Omega)$ in \cite{ABDG, AS, KY}. Beforehand, let us give some notations which will be used in this particular subsection.  

$\{ \Sigma_j: \ 1\leq j \leq N_2  \}$ is an admissible set of cuts, which cuts $\Omega$ adequately to reduce it to a simply connected domain. Let $\Omega^0=\Omega \setminus  \cup_{j =1}^{N_2} \Sigma_j$  and let us fix a unit normal $\Bn_j$ on each $\Sigma_j$, $1 \leq j \leq N_2$. \\[2mm]
(1)\ For any function $q \in W^{1, 2}(\Omega^0)$, let us denote by $[q]_j$ the jump of $q$ through $\Sigma_j$ (i.e. the differences of the traces of $q$) along $\Bn_j$. \\[2mm]
(2)\ For any distribution $q$ in $\mathcal{D}^{\prime} (\Omega^0)$, let us denote   by $\nabla^0 q$ the gradient of $q$ in $\mathcal{D}^{\prime} (\Omega^0)$, in order to distinguish from the gradient $\nabla q$ in $\mathcal{D}^{\prime}(\Omega)$.\\[2mm]
(3)\ Let us introduce one function space, where  the null space basis comes from,
\begin{equation}
\Theta = \left\{  r \in W^{1, 2}(\Omega^0):\ \ [r]_j = constant,\ 1\leq j \leq N_2            \right\}.
\end{equation}

The following  preliminary lemma gives a characterization of the functions in $\Theta$,  whose proof can be found in \cite{ABDG}.
\begin{lemma}\label{lemma3-2}
Let $r $ belong to $W^{1, 2}(\Omega^0)$. Then $r $ belongs to $\Theta$ if and only if
\begin{equation}\nonumber
{\rm curl }~(\nabla^0 r) = \nabla \times (\nabla^0 r ) = 0, \ \ \ \mbox{ in}\  \Omega.
\end{equation}
\end{lemma}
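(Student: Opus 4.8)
The plan is to reduce the statement to a single distributional identity relating $\curl(\nabla^0 r)$ to the jumps $[r]_j$, and then to read that identity in both directions. Since the cuts $\Sigma_j$ have Lebesgue measure zero, $\nabla^0 r \in L^2(\Omega^0) = L^2(\Omega)$, so $\curl(\nabla^0 r)$ makes sense as a distribution on $\Omega$, and for $\Bvarphi \in \mathcal{D}(\Omega)$ one has, by the definition of the distributional curl (which is formally self-adjoint), $\langle \curl(\nabla^0 r), \Bvarphi \rangle = \int_\Omega \nabla^0 r \cdot \curl\,\Bvarphi \, dx$. I would then integrate by parts on $\Omega^0$, whose boundary consists of $\partial\Omega$ together with the two faces $\Sigma_j^{+}$ and $\Sigma_j^{-}$ of each cut: writing $\nabla^0 r \cdot \curl\,\Bvarphi = \div(r\,\curl\,\Bvarphi) - r\,\div(\curl\,\Bvarphi)$ and using $\div\,\curl = 0$ together with the fact that $\Bvarphi$ vanishes near $\partial\Omega$, the only surviving boundary contributions come from the two faces of each cut and combine into
\[
\langle \curl(\nabla^0 r), \Bvarphi \rangle = -\sum_{j=1}^{N_2} \int_{\Sigma_j} [r]_j \, (\curl\,\Bvarphi)\cdot \Bn_j \, dS .
\]
Here the right-hand side is to be read as a sum of duality pairings on the $\Sigma_j$: the one-sided traces of $r \in W^{1,2}(\Omega^0)$ lie in $H^{1/2}(\Sigma_j)$, hence $[r]_j \in H^{1/2}(\Sigma_j)$, while $(\curl\,\Bvarphi)\cdot\Bn_j$ is smooth and, because $\Bvarphi$ has compact support in $\Omega$ and $\partial\Sigma_j \subset \partial\Omega$, is compactly supported in the interior of $\Sigma_j$.

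For the implication $r \in \Theta \Rightarrow \curl(\nabla^0 r) = 0$: if $[r]_j \equiv c_j$ is constant, then $\int_{\Sigma_j} [r]_j \, (\curl\,\Bvarphi)\cdot\Bn_j \, dS = c_j \int_{\Sigma_j} (\curl\,\Bvarphi)\cdot\Bn_j \, dS = c_j \int_{\partial\Sigma_j} \Bvarphi \cdot \Btau_j \, dl$ by Stokes' theorem; since $\Sigma_j$ is transversal to $\partial\Omega$ its boundary $\partial\Sigma_j$ lies on $\partial\Omega$, where $\Bvarphi$ vanishes, so every term is zero and $\langle \curl(\nabla^0 r), \Bvarphi \rangle = 0$ for all $\Bvarphi \in \mathcal{D}(\Omega)$.

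For the converse $\curl(\nabla^0 r) = 0 \Rightarrow r \in \Theta$: the displayed identity gives $\sum_j \int_{\Sigma_j} [r]_j \,(\curl\,\Bvarphi)\cdot\Bn_j \, dS = 0$ for every $\Bvarphi \in \mathcal{D}(\Omega)$. Fix $j_0$. Using the surface identity $(\curl\,\Bvarphi)\cdot\Bn_{j_0} = {\rm div}_T(\Bvarphi \times \Bn_{j_0})$ on $\Sigma_{j_0}$ (the same local formula as on $\partial\Omega$, cf.\ \cite{Monk}), and choosing $\Bvarphi$ supported in a thin tubular neighbourhood of the interior of $\Sigma_{j_0}$ — possible because the cuts are pairwise disjoint and their boundaries sit on $\partial\Omega$ — only the $j_0$-th term survives; an integration by parts on the surface $\Sigma_{j_0}$ (the $\partial\Sigma_{j_0}$ boundary term vanishing since $\Bvarphi = 0$ there) then yields $\int_{\Sigma_{j_0}} \nabla_T [r]_{j_0} \cdot (\Bvarphi \times \Bn_{j_0}) \, dS = 0$. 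Since $\Bvarphi \mapsto (\Bvarphi \times \Bn_{j_0})|_{\Sigma_{j_0}}$ realizes every smooth tangential field on $\Sigma_{j_0}$ with compact support in its interior (for tangential $\Bg$, take $\Bvarphi = \Bn_{j_0}\times\Bg$ near the surface and cut off), we get $\nabla_T [r]_{j_0} = 0$ on $\Sigma_{j_0}$, and as $\Sigma_{j_0}$ is connected $[r]_{j_0}$ is constant. Running this for each $j_0$ gives $r \in \Theta$.

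The main obstacle I anticipate is the careful bookkeeping in the integration by parts over the cut domain $\Omega^0$ together with the attendant low regularity: the one-sided traces of $r$ on the faces $\Sigma_j$ are only $H^{1/2}$, so the boundary terms and the subsequent surface integration by parts must be interpreted as duality pairings, and the localization near a single cut (with the surjectivity of $\Bvarphi \mapsto \Bvarphi\times\Bn_{j_0}|_{\Sigma_{j_0}}$ and the correct orientation of the faces) has to be arranged so that the vanishing of the $\partial\Sigma_{j_0}$-terms is genuinely justified. The algebraic identities and the two applications of Stokes' theorem are routine once this framework is in place.
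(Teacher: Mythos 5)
Your argument is correct. Note that the paper does not prove this lemma at all --- it is stated as a preliminary fact with the proof attributed to the reference [ABDG] --- and your cut-domain integration by parts, yielding $\langle {\rm curl}\,(\nabla^0 r),\Bvarphi\rangle=-\sum_j\langle [r]_j,({\rm curl}\,\Bvarphi)\cdot\Bn_j\rangle_{\Sigma_j}$ and then reading this identity in both directions (Stokes on each $\Sigma_j$ for the forward implication; localization near one cut, the identity $({\rm curl}\,\Bvarphi)\cdot\Bn_j={\rm div}_T(\Bvarphi\times\Bn_j)$ and surjectivity of $\Bvarphi\mapsto\Bvarphi\times\Bn_j$ onto compactly supported tangential fields for the converse), is essentially the standard proof given there, with the trace/duality caveats you flag handled appropriately.
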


%%%%%%%%%%%%%%%%%%%%%%%%%basis of K_T^2%%%%%%%%%%%%%%%%%%%%
Now we are ready to give one explicit basis for $K_{T, \Bsigma}^2(\Omega)$.
\begin{theorem}\label{lemma3-3}
The dimension of the null space $K_{T, \Bsigma}^2(\Omega)$ is equal to the Betti number $N_2$. It is spanned by the functions $\{ \Bsigma^{-1} \nabla^0 q_j^T, \ 1\leq j \leq N_2\}$ , where $q_j^T$ is the solution in $W^{1, 2}(\Omega^0)$, unique up to an additive constant, of the problem
\begin{equation}\label{3-21}
\left\{ \begin{array}{l}
- \nabla^0  \cdot ( \Bsigma^{-1} \nabla^0 q_j^T) = 0, \ \ \ \mbox{in}\ \Omega^0,\\[2mm]
\Bsigma^{-1} \nabla^0 q_j^T \cdot \Bn = 0, \ \ \ \mbox{on}\ \partial \Omega,  \\[2mm]
[q_j^T]_k = constant,\ \ \ \mbox{and}\ \ \ \ [\Bsigma^{-1} \nabla^0 q_j^T\cdot \Bn_k ]_k = 0,\ \ \ \ 1\leq k \leq N_2,\\[2mm]
< \Bsigma^{-1} \nabla^0 q_j^T \cdot \Bn_k, \ 1>_{\Sigma_k} = \delta_{jk},\ \ \ 1\leq k \leq N_2.
\end{array}
\right.
\end{equation}
\end{theorem}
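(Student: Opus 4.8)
The strategy is to reduce the study of $K_{T,\Bsigma}^2(\Omega)$ to the scalar elliptic problem \eqref{3-21} on the cut domain $\Omega^0$, using the potential provided by Lemma \ref{lemma3-2}. First I would show that every $\Bu \in K_{T,\Bsigma}^2(\Omega)$ arises from a potential in $\Theta$. Set $\Bv = \Bsigma \Bu$; then $\curl \Bv = 0$ in $\Omega$, so in the simply connected domain $\Omega^0$ there exists $r \in W^{1,2}(\Omega^0)$ with $\nabla^0 r = \Bv$ in $\Omega^0$. Since $\curl(\nabla^0 r)=\curl \Bv = 0$ holds in all of $\Omega$ (not just $\Omega^0$), Lemma \ref{lemma3-2} gives $r \in \Theta$, i.e. the jumps $[r]_k$ are constants. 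The equations defining $K_{T,\Bsigma}^2(\Omega)$ translate directly: ${\rm div}~\Bu = 0$ becomes $\nabla^0\cdot(\Bsigma^{-1}\nabla^0 r)=0$ in $\Omega^0$; the boundary condition $\Bu\cdot\Bn=0$ becomes $\Bsigma^{-1}\nabla^0 r\cdot\Bn = 0$ on $\partial\Omega$; and the fact that $\Bv=\Bsigma\Bu \in L^2(\Omega)$ with ${\rm div}~\Bu = 0$ in the sense of $\mathcal D'(\Omega)$ forces the normal trace $\Bsigma^{-1}\nabla^0 r\cdot\Bn_k = \Bu\cdot\Bn_k$ to have no jump across $\Sigma_k$, i.e. $[\Bsigma^{-1}\nabla^0 r\cdot\Bn_k]_k = 0$. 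Thus $r$ solves all of \eqref{3-21} except possibly the normalization, and conversely any solution $q$ of the first three lines of \eqref{3-21} yields $\Bsigma^{-1}\nabla^0 q \in K_{T,\Bsigma}^2(\Omega)$ (the tangential component of $\Bsigma^{-1}\nabla^0 q$ has no jump because $[q]_k$ is constant, so $\Bsigma^{-1}\nabla^0 q$ is a well-defined $L^2$ field on $\Omega$ whose curl vanishes in $\mathcal D'(\Omega)$).

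Next I would establish that the map $\Bu \mapsto ([r]_1,\dots,[r]_{N_2}) \in \mathbb{R}^{N_2}$ is a linear isomorphism from $K_{T,\Bsigma}^2(\Omega)$ onto $\mathbb{R}^{N_2}$; this simultaneously proves $\dim K_{T,\Bsigma}^2(\Omega) = N_2$ and that the fields $\Bsigma^{-1}\nabla^0 q_j^T$ are a basis. Injectivity: if all jumps $[r]_k$ vanish, then $r \in W^{1,2}(\Omega)$ and $\Bu = \Bsigma^{-1}\nabla r$ solves ${\rm div}(\Bsigma^{-1}\nabla r)=0$ in $\Omega$ with $\Bsigma^{-1}\nabla r\cdot\Bn=0$ on $\partial\Omega$; testing against $r$ and using the ellipticity of $\Bsigma^{-1}$ gives $\nabla r \equiv 0$, so $\Bu = 0$. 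Surjectivity: I need, for each $j$, a solution $q_j^T$ of \eqref{3-21}. This is the variational problem of minimizing $\int_{\Omega^0}\Bsigma^{-1}\nabla^0 q\cdot\nabla^0 q\,dx$ over the affine subspace of $\Theta$ (modulo constants) determined by the constraints $[q]_k = c_k$; Lax–Milgram on the Hilbert space $\Theta/\mathbb{R}$ — coercivity coming from uniform ellipticity of $\Bsigma^{-1}$ and a Poincaré-type inequality on $\Omega^0$ — gives existence and uniqueness, and the Euler–Lagrange equations are precisely the first three lines of \eqref{3-21}. The weak formulation, obtained by integrating by parts over $\Omega^0$ and collecting boundary and jump terms, shows that the natural quantity $<\Bsigma^{-1}\nabla^0 q\cdot\Bn_k,\,1>_{\Sigma_k}$ is exactly the dual pairing against the constraint data; a standard duality argument then shows that the linear functionals $q\mapsto <\Bsigma^{-1}\nabla^0 q\cdot\Bn_k,\,1>_{\Sigma_k}$ are linearly independent on the solution space, so one may normalize $q_j^T$ so that $<\Bsigma^{-1}\nabla^0 q_j^T\cdot\Bn_k,\,1>_{\Sigma_k}=\delta_{jk}$. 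Finally, the $\{[q_j^T]_k\}_{j,k}$ matrix being invertible (indeed, an integration-by-parts identity shows $[q_j^T]_k = <\Bsigma^{-1}\nabla^0 q_k^T\cdot\Bn_j,\,1>_{\Sigma_j}$ by symmetry of $\Bsigma^{-1}$, hence equals $\delta_{jk}$ after normalization) yields that the $\Bsigma^{-1}\nabla^0 q_j^T$ are independent and span, completing the proof.

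The main obstacle I anticipate is the careful bookkeeping across the cuts: making rigorous the claim that $\Bu\cdot\Bn=0$ on $\partial\Omega$ together with ${\rm div}~\Bu=0$ in $\mathcal D'(\Omega)$ (not merely in $\mathcal D'(\Omega^0)$) forces the jump condition $[\Bsigma^{-1}\nabla^0 q_j^T\cdot\Bn_k]_k=0$, and conversely that this jump condition is exactly what is needed to glue a field defined on $\Omega^0$ back into an $L^2$ divergence-free field on $\Omega$. This requires testing ${\rm div}~\Bu=0$ against functions in $W^{1,p'}(\Omega)$ whose restriction to $\Omega^0$ is discontinuous across the $\Sigma_k$ only in a controlled way, and tracking the resulting surface integrals over the $\Sigma_k$. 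The ellipticity of $\Bsigma$ introduces no new difficulty beyond replacing $\nabla$ by $\Bsigma^{-1}\nabla$ throughout and invoking Lax–Milgram in place of the Dirichlet-principle for the Laplacian; the topological content (the count $N_2$) is inherited verbatim from the identity-coefficient case treated in \cite{ABDG, AS}.
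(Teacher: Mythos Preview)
Your proposal is correct and close in spirit to the paper's proof, but the organization differs in one notable way. The paper does not construct $q_j^T$ by minimizing over the affine subspace $\{q\in\Theta:\ [q]_k=c_k\}$ and then renormalizing via a Gram--matrix/duality argument. Instead it solves directly the variational problem
\[
\int_{\Omega^0}\Bsigma^{-1}\nabla^0 q_j^T\cdot\nabla^0\varphi\,dx=[\varphi]_j,\qquad \forall\,\varphi\in\Theta,
\]
by Lax--Milgram on $\Theta/\mathbb{R}$. Testing with $\varphi\in C_0^\infty(\Omega^0)$, $\varphi\in C_0^\infty(\Omega)$, $\varphi\in C^\infty(\overline{\Omega})$, and finally $\varphi=r\in\Theta$ with $[r]_{k'}=\delta_{k'k}$ yields in turn the PDE in $\Omega^0$, the vanishing of the flux jump across each $\Sigma_k$, the conormal condition on $\partial\Omega$, and the flux normalization $\langle\Bsigma^{-1}\nabla^0 q_j^T\cdot\Bn_k,\,1\rangle_{\Sigma_k}=\delta_{jk}$ for free. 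For the spanning step, the paper does not use your jump map; it subtracts from an arbitrary $\Bu\in K_{T,\Bsigma}^2(\Omega)$ the combination $\sum_j\langle\Bu\cdot\Bn_j,1\rangle_{\Sigma_j}\,\Bsigma^{-1}\nabla^0 q_j^T$, writes the remainder as $\Bsigma^{-1}\nabla^0 q$ with $q\in\Theta$, and kills it by the energy identity $\int_{\Omega^0}\Bsigma\Bw\cdot\Bw\,dx=0$ obtained from the variational equation. Your route via the jump isomorphism works equally well, but the paper's choice of right-hand side $[\varphi]_j$ avoids the extra renormalization step entirely.

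One small slip: your parenthetical claim that the jump matrix $([q_j^T]_k)_{j,k}$ equals $\delta_{jk}$ after flux normalization is not correct in general; integration by parts gives $[q_i^T]_j=\int_{\Omega^0}\Bsigma^{-1}\nabla^0 q_j^T\cdot\nabla^0 q_i^T\,dx$, which is a symmetric positive-definite matrix but not the identity. This does not affect your argument, since you only need invertibility, which this identity supplies.
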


\begin{proof}
The proof is decomposed into two parts.  The first part is devoted to the existence of $q_j^T$, while in the second part we verify that $\{ \Bsigma^{-1}\nabla^0 q_j^T;\, 1\leq j \leq N_2 \}$ is a basis
of $K_{T, \Bsigma}^2(\Omega).$

 {\bf Part I}\ \ For $1 \leq j \leq N_2$, let us consider the following variational problem: find $q_j^T$ in $\Theta$, such that for every $\varphi \in \Theta$,
\begin{equation}\label{3-2-1}
\int_{\Omega^0} \Bsigma^{-1} \nabla^0 q_j^T \cdot \nabla^0  \varphi \, dx = [\varphi]_j.
\end{equation}
It follows from  Lax-Milgram theorem that the problem \eqref{3-2-1} has a solution which is unique up to an additive constant. Using \eqref{3-2-1} with $\varphi \in
C_0^\infty(\Omega^0)$,
\begin{equation}\label{3-2-2-1}
\int_{\Omega^0} \Bsigma^{-1} \nabla^0 q_j^T \cdot \nabla^0 \varphi \, dx  = [\varphi ]_{j} = 0,
\end{equation}
hence we obtain that
\begin{equation}\label{3-2-2-2}
- \nabla^0 \cdot \left( \Bsigma^{-1} \nabla^0  q_j^T    \right) = 0 ,\ \ \ \mbox{in}\ \Omega^0.
\end{equation}

%Note that for any function $q \in W^{1, 2}(\Omega^0)$, $\tilde{\nabla } q $ belongs to $L^2(\Omega^0)$, and can be extended to $L^2(\Omega)$. 

Using \eqref{3-2-1} with $\varphi \in C_0^\infty
(\Omega)$,
\begin{equation}\label{3-2-2}
\int_{\Omega^0} \Bsigma^{-1} \nabla^0 q_j^T \cdot \nabla^0 \varphi \, dx = [\varphi]_j =0,
\end{equation}
On the other hand,
\begin{equation}\label{3-2-2-4}
\int_{\Omega^0} \Bsigma^{-1} \nabla^0 q_j^T \cdot \nabla^0 \varphi \, dx = - \int_{\Omega^0} \nabla^0 \cdot (\Bsigma^{-1} \nabla q_j^T ) \cdot \varphi \, dx
+ \sum_{k =1}^{N_2} \int_{\Sigma_k} [ \Bsigma^{-1} \nabla^0 q_j^T \cdot \Bn_k ]_k  \cdot \varphi \, dS.
\end{equation}
Combining the three equalities \eqref{3-2-2-2}-\eqref{3-2-2-4}, we have
\begin{equation}\label{3-2-2-5}
[\Bsigma^{-1} \nabla^0 q_j^T \cdot \Bn_k ]_k = 0,\ \ \ \ \ k = 1, 2, \cdots, N_2,
\end{equation}
due to the arbitrariness of $\varphi$. \eqref{3-2-2-5} means the jump of $\Bsigma^{-1} \nabla^0 q_j^T \cdot \Bn_k$ across any $\Sigma_k$ is zero.

Furthermore, using \eqref{3-2-1} with $\varphi \in C^\infty(\overline{\Omega})$,
\begin{equation} \nonumber
0 = \int_{\Omega^0} \Bsigma^{-1} \nabla^0 q_j^T \cdot \nabla^0 \varphi \, dx =   \int_{\partial \Omega} ( \Bsigma^{-1} \nabla^0 q_j^T \cdot \Bn ) \cdot \varphi \, dS,
\end{equation}
where we used the facts \eqref{3-2-2-2} and \eqref{3-2-2-5}.
It implies that
\begin{equation}\label{3-2-6}
\Bsigma^{-1} \nabla^0  q_j^T  \cdot \Bn = 0, \ \ \ \mbox{on}\ \partial \Omega.
\end{equation}

Fixing an integer $k$,\ $1 \leq k \leq N_2$,  choosing $r$ in $\Theta$, such that $[r]_j =  \delta_{jk}$ for any $j$, and applying \eqref{3-2-1} with $\varphi = r$,
\begin{equation}\label{3-2-7}
< \Bsigma^{-1} \nabla^0 q_j^T \cdot \Bn_k,  1>_{\Sigma_k} = \int_{\Omega^0} \Bsigma^{-1} \nabla^0 q_j^T \cdot \nabla^0 r \, dx
= \delta_{jk}.
\end{equation}
Hence, the solution to the variational problem \eqref{3-2-1} is also the solution to \eqref{3-21}. On the other hand, it is easy to check that every solution of \eqref{3-21} also solves \eqref{3-2-1}. Thus \eqref{3-21} admits one unique solution(modulus one constant) $q_j^T \in 
\Theta$ ( up to an additive constant).

{\bf Part II}\ \  It follows from the proof in {\bf Part I} and Lemma \ref{lemma3-2}  that 
\begin{equation}\nonumber
{\rm curl}~(\nabla^0 q_j^T ) = 0, \ \ \ \mbox{in}\ \Omega, \ \ \ \ \ \Bsigma^{-1} \nabla^0 q_j^T  \cdot \Bn = 0,
\ \ \ \mbox{on}\ \partial \Omega,
\end{equation}
and 
\begin{equation}\nonumber
- \nabla^0 \cdot \left(\Bsigma^{-1} \nabla^0 q_j^T   \right) =  0, \ \ \ \mbox{in}\ \Omega^0.
\end{equation}
We will prove that in fact
\begin{equation}\label{additional}
- {\rm div}~ \left(\Bsigma^{-1} \nabla^0 q_j^T   \right) =  0, \ \ \ \mbox{in}\ \Omega.
\end{equation}
Note that for every function $\varphi \in C_0^\infty(\Omega)$, 
\begin{equation}\nonumber
\nabla \varphi = \nabla^0 \varphi ,\ \ \ \ \mbox{in}\ \Omega^0. 
\end{equation}
Hence, it holds that 
\begin{equation}\nonumber
\int_{\Omega} \Bsigma^{-1} \nabla^0 q_j^T \cdot \nabla \varphi \, dx 
= \int_{ \Omega^0} \Bsigma^{-1} \nabla^0 q_j^T \cdot \nabla^0 \varphi \, dx= 0 ,
\end{equation}
which implies \eqref{additional}. Hence, $\Bsigma^{-1} \nabla^0   q_j^T \in K_{T, \Bsigma}^2(\Omega)$. 

Due to the last equality in \eqref{3-21}, the functions $\left\{ \Bsigma^{-1} \nabla^0 q_j^T , \ 1 \leq j \leq N_2 \right\}$, are linearly independent. Next, we show that they span the space $K_{T, \Bsigma}^2(\Omega)$.
Let $\Bu$ be any function in $K_{T, \Bsigma}^2(\Omega)$ and consider the function
\begin{equation}\nonumber
\Bw =   \Bu - \sum_{j=1}^{N_2} < \Bu \cdot \Bn_j,\  1>_{\Sigma_j} \Bsigma^{-1} \nabla^0 q_j^T.
\end{equation}
It is easy to check that 
\begin{equation}\nonumber
\nabla^0 \times (\Bsigma \Bw) = 0,\ \ \ \mbox{in}\ \Omega^0.
\end{equation}
Since $\Omega^0$ is simply connected,  there exists a function $q \in W^{1,2}(\Omega^0)$, such that
\begin{equation}\nonumber
\Bsigma \Bw = \nabla^0 q,\ \ \ \ \mbox{ in}\  \Omega^0.
\end{equation}

¡¡
On the other hand, $q_j^T \in \Theta$, it follows that 
\begin{equation}\nonumber
{\rm curl }~ (\nabla^0 q_j^T) = 0,\ \ \ \ \mbox{in} \ \Omega.
\end{equation}
Consequently, 
\begin{equation}\nonumber
{\rm curl}~(\nabla^0 q) = {\rm curl}~(\Bsigma \Bu) - \sum_{j =1}^{N_2} < \Bu\cdot \Bn_j, 1>_{\Sigma_j} 
 {\rm curl}~\left( \nabla^0  q_j^T \right) = 0,\ \ \ \ \mbox{in}\ \Omega.
\end{equation}
According to Lemma \ref{lemma3-2}, $q \in \Theta$. 

Hence, 
\begin{equation}\label{3-2-15} 
\begin{array}{l}
\displaystyle \int_{\Omega^0} \Bsigma  \Bw \cdot \Bw \, dx \\[3mm] = \displaystyle  \int_{\Omega^0} \Bw \cdot \nabla^0 q \, dx \\[3mm]
=  \displaystyle \int_{\Omega^0} \Bu \cdot \nabla^0 q\, dx - \sum_{j =1}^{N_2} < \Bu \cdot \Bn_j, 1>_{\Sigma_j} \cdot
\int_{\Omega^0} \Bsigma^{-1} \nabla^0  q_j^T \cdot \nabla^0 q\, dx \\[3mm]
= \displaystyle \sum_{j =1}^{N_2} < \Bu \cdot \Bn_j, 1>_{\Sigma_j} \cdot [q]_j - \sum_{j =1}^{N_2} < \Bu \cdot \Bn_j, 1>_{\Sigma_j} \cdot [q]_j\\[4mm]
=0,
\end{array}
\end{equation}
where the third equality is due to integration by parts and \eqref{3-2-1}. It implies that $\Bw$ is zero, i. e., 
\begin{equation}\nonumber \displaystyle
\Bu = \sum_{j =1}^{N_2} < \Bu \cdot \Bn_j, 1>_{\Sigma_j} \Bsigma^{-1} \nabla^0 q_j^T,
\end{equation}
which says that $\{ \Bsigma^{-1} \nabla^0 q_j^T,\ 1\leq j \leq N_2 \}$ span the space $K_{T, \Bsigma}^2(\Omega)$. 
That ends the proof of Theorem \ref{lemma3-3}.
\end{proof}

\vspace{3mm}

%%%%%%%%%%%%%%%%%%%%%%%%%%%%%%%%%%%%%Indentity between K_T^p and K_T

Next theorem will prove the identity between $K_{T, \Bsigma}^p(\Omega)$and $K_{T, \Bsigma}(\Omega)$. The proof is based on the fact that $K_T^p(\Omega) = K_T^2(\Omega) = K_T(\Omega)$, which has been proved in
\cite{ KY}.

\begin{theorem}\label{theorem3-3}
For every $1 < p < \infty$, $K_{T, \Bsigma}^p(\Omega) = K_{T, \Bsigma}(\Omega)$.
\end{theorem}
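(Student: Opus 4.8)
\textbf{Proof proposal for Theorem \ref{theorem3-3}.}

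The plan is to prove the two inclusions $K_{T,\Bsigma}^p(\Omega) \subset K_{T,\Bsigma}(\Omega)$ and $K_{T,\Bsigma}(\Omega) \subset K_{T,\Bsigma}^p(\Omega)$ separately, exploiting the explicit basis constructed in Theorem \ref{lemma3-3} together with the known identity $K_T^p(\Omega) = K_T^2(\Omega) = K_T(\Omega)$ from \cite{KY}. The second inclusion is essentially free: if $\Bu \in K_{T,\Bsigma}(\Omega)$ then $\Bu \in C^\infty(\overline\Omega) \subset L^p(\Omega)$ and it satisfies exactly the defining differential equations and boundary condition of $K_{T,\Bsigma}^p(\Omega)$, so $K_{T,\Bsigma}(\Omega) \subset K_{T,\Bsigma}^p(\Omega)$ holds trivially. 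Hence the whole content is the regularity statement $K_{T,\Bsigma}^p(\Omega) \subset K_{T,\Bsigma}(\Omega)$, i.e. every $L^p$ element of the null space is in fact smooth up to the boundary.

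For the forcing direction I would proceed as follows. First, take $\Bu \in K_{T,\Bsigma}^p(\Omega)$. The idea is to show $\Bu$ lies in the span of the basis functions $\{\Bsigma^{-1}\nabla^0 q_j^T : 1 \le j \le N_2\}$ from Theorem \ref{lemma3-3}, which are independent of $p$ and are smooth (indeed $q_j^T$ solves an elliptic conormal problem on $\Omega^0$ with $C^\infty$ coefficients, so by elliptic regularity $\nabla^0 q_j^T \in C^\infty(\overline{\Omega^0})$, and one checks the pieces glue across the $\Sigma_j$ to give a function smooth on $\overline\Omega$). To identify $\Bu$ with such a combination, mimic the argument of Part II of Theorem \ref{lemma3-3}: since $\Bu \cdot \Bn \in W^{-1/p,p}(\partial\Omega)$ is well-defined for $\Bu \in L^p(\mathrm{div};\Omega)$ and the fluxes $\langle \Bu \cdot \Bn_j, 1\rangle_{\Sigma_j}$ make sense (here one may need the analogue of the flux functionals through the cuts in the $L^p$ setting, as developed in \cite{AS, KY} for $K_T^p$), form
\begin{equation}\nonumber
\Bw = \Bu - \sum_{j=1}^{N_2} \langle \Bu\cdot\Bn_j, 1\rangle_{\Sigma_j}\, \Bsigma^{-1}\nabla^0 q_j^T .
\end{equation}
Then $\Bw \in K_{T,\Bsigma}^p(\Omega)$ still, and all its cut-fluxes vanish. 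Because $\mathrm{curl}(\Bsigma\Bw) = 0$ in $\Omega$ and $\Omega^0$ is simply connected, $\Bsigma\Bw = \nabla^0 q$ for some $q \in W^{1,p}(\Omega^0)$, and the vanishing fluxes plus Lemma \ref{lemma3-2} force $q \in \Theta$ (with all jumps zero). The energy-type identity now has to be replaced by an $L^p$–$L^{p'}$ duality pairing, since $\Bsigma\Bw \cdot \Bw$ need not be integrable when $p \ne 2$; instead one pairs $\mathrm{div}\,\Bw = 0$ against the potential and integrates by parts over $\Omega^0$, using $\Bw\cdot\Bn = 0$ on $\partial\Omega$ and $[q]_j = 0$, to conclude $\nabla^0 q = 0$, hence $\Bw = 0$. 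This realizes $\Bu$ as a finite linear combination of smooth fields, so $\Bu \in K_{T,\Bsigma}(\Omega)$.

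The main obstacle I anticipate is the low-regularity bookkeeping in the duality step: for $\Bu \in L^p$ only, the quantities $\Bu\cdot\Bn_j$ on the cuts, the trace $q|_{\partial\Omega}$, and the integration by parts over the cut domain $\Omega^0$ all require care, and one cannot invoke the Hilbert-space Lax–Milgram/energy shortcut that made Part II of Theorem \ref{lemma3-3} clean. The cleanest route is probably to first establish directly that $\Bw$ with zero cut-fluxes and $\mathrm{curl}(\Bsigma\Bw)=0$, $\mathrm{div}\,\Bw = 0$, $\Bw\cdot\Bn = 0$ must vanish — essentially the statement $K_T^p = K_T(\Omega)$ applied after the change of variable $\Bz = \Bsigma\Bw$ (so $\mathrm{curl}\,\Bz = 0$, $\mathrm{div}(\Bsigma^{-1}\Bz)=0$), using that the map $\Bsigma^{-1}$ does not affect the topological count $N_2$ — and then the smoothness of $\Bu$ follows from the smoothness of the basis. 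An alternative that sidesteps some of this is a direct elliptic-regularity/bootstrap argument: from $\mathrm{curl}(\Bsigma\Bu) = 0$ localize to write $\Bsigma\Bu = \nabla\phi$ locally, feed this into $\mathrm{div}\,\Bu = \mathrm{div}(\Bsigma^{-1}\nabla\phi) = 0$ to get an elliptic equation for $\phi$, and bootstrap interior and (via the conormal condition $\Bsigma^{-1}\nabla\phi\cdot\Bn = \Bu\cdot\Bn = 0$) boundary regularity using \cite{Lieberman}; combined with a covering of $\overline\Omega$ this upgrades $\Bu$ from $L^p$ to $C^\infty(\overline\Omega)$, and in particular $\Bu \in L^2$, so that $K_{T,\Bsigma}^p(\Omega) = K_{T,\Bsigma}^2(\Omega) = K_{T,\Bsigma}(\Omega)$ by Theorem \ref{lemma3-3}. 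I would present the basis-expansion argument as the main proof and mention the bootstrap as the conceptual reason it works.
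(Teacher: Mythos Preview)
Your main line of argument has a real gap that you partly anticipate but do not actually close. Pairing $\mathrm{div}\,\Bw = 0$ against the scalar potential $q$ and integrating by parts over $\Omega^0$ simply reproduces the energy integral $\int_{\Omega^0}\Bsigma\Bw\cdot\Bw\,dx$, since $\nabla^0 q=\Bsigma\Bw$; you have not replaced the $L^2$ pairing by anything, you have just rewritten it, and for $p<2$ this quantity need not be finite. In addition, the assertion that the vanishing cut-fluxes of $\Bw$ force $[q]_j=0$ is not justified (Lemma~\ref{lemma3-2} only gives that the jumps are constants), and your claim that the basis functions $\Bsigma^{-1}\nabla^0 q_j^T$ are smooth on $\overline\Omega$ is precisely a special case of what you are trying to prove: elliptic regularity on $\Omega^0$ gives smoothness up to the cuts from each side, but the gluing across $\Sigma_k$ is exactly the nontrivial content of $K_{T,\Bsigma}^2=K_{T,\Bsigma}$.

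The paper does not attempt to push the basis-expansion argument of Theorem~\ref{lemma3-3} to $L^p$. Instead it takes $\Bv=\Bsigma\Bu$ and applies the \emph{standard} Helmholtz--Weyl decomposition $\Bv=\Bh+{\rm curl}\,\Bw+\nabla q$ from \cite{KY}, valid in $L^p$, with $\Bh\in K_T^p(\Omega)=K_T(\Omega)\subset C^\infty(\overline\Omega)$. Because ${\rm curl}\,\Bv=0$, one checks $\Bv-\nabla q\in K_T^p(\Omega)$, and uniqueness of the decomposition forces ${\rm curl}\,\Bw=0$, i.e.\ $\Bv=\Bh+\nabla q$ globally on $\Omega$. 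Now $q$ solves the single elliptic equation ${\rm div}(\Bsigma^{-1}\nabla q)=-{\rm div}(\Bsigma^{-1}\Bh)$ with conormal condition $\Bsigma^{-1}\nabla q\cdot\Bn=-\Bsigma^{-1}\Bh\cdot\Bn$, whose right-hand side is smooth since $\Bh$ is, so $q\in C^\infty(\overline\Omega)$ and hence $\Bu\in C^\infty(\overline\Omega)$. This sidesteps the cuts, the duality issue, and the circularity entirely. Your alternative local bootstrap is morally the same idea; the paper's version is simply its global form, using the known $K_T^p=K_T$ result to supply a global smooth harmonic part rather than patching local potentials.
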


\begin{proof}
Obviously, $K_{T, \Bsigma}(\Omega) \subseteq K_{T, \Bsigma}^p(\Omega)$. It suffices to  prove that for every function $\Bu$ belonging to $K_{T, \Bsigma}^p(\Omega)$, $\Bu \in C^\infty(\overline{\Omega})$.

Suppose that $\Bu \in K_{T, \Bsigma}^p(\Omega)$, let $\Bv = \Bsigma \Bu$. According to Theorem 2.1  in \cite{KY}, $\Bv$ has the following decomposition,
\begin{equation}\nonumber
\Bv = \Bh + {\rm curl }~\Bw + \nabla q,
\end{equation}
where $ \Bh \in K_T^p(\Omega)= K_T(\Omega)$, $q \in W^{1, p}(\Omega)$, and $\Bw \in W^{1, p}(\Omega)$, with
\begin{equation}\nonumber
{\rm div}~\Bw = 0, \  \ \mbox{in} \ \Omega, \ \ \ \Bw\times \Bn= 0, \ \ \mbox{on}\  \partial \Omega.
\end{equation}
And $\Bh$ is unique, $q$ is unique up to an additive constant and $\Bw $ is unique up to an additive element of $K_N^p(\Omega)$.

In fact, $q$ is the solution
to the following system,
\begin{equation} \nonumber \left\{
\begin{array}{l}
\Delta q = {\rm div}~\Bv ,\ \ \ \mbox{in}\ \Omega, \\[2mm]
\nabla q \cdot \Bn = \Bv \cdot \Bn,\ \ \ \mbox{on}\ \partial \Omega.
\end{array}
\right. \end{equation}
Hence,
\begin{equation}\nonumber
{\rm div}~( \Bv  - \nabla q ) = 0,\ \ \mbox{in}\ \Omega, \ \ \ \mbox{and}\ \ (\Bv - \nabla q)\cdot \Bn = 0,\ \ \mbox{on}\ \partial \Omega.
\end{equation}
Since ${\rm curl }~\Bv = 0$ in $\Omega$, it can be verified that
\begin{equation}\nonumber
{\rm curl}~( \Bv  - \nabla q ) = 0,\ \ \ \mbox{in} \ \Omega.
\end{equation}
Therefore, $\Bv - \nabla q \in K_T^p(\Omega)$. Due to the uniqueness of decomposition, ${\rm curl  }~\Bw = 0$, i.e., $\Bv = \Bh + \nabla q$.

Next we will discuss the regularity of $q$. Since
\begin{equation} \nonumber
{\rm div}~(\Bsigma^{-1} \Bv)= 0\ \ \mbox{in}\ \Omega, \ \ \ \mbox{and}\ \ \Bsigma^{-1} \Bv\cdot \Bn = 0\ \ \mbox{on}\ \partial \Omega,
\end{equation}
 $q$ is the solution of the following system,
\begin{equation}\nonumber
\left\{\begin{array}{l}
{\rm div}~(\Bsigma^{-1} \nabla q ) = - {\rm div}~(\Bsigma^{-1} \Bh), \ \ \ \mbox{in}\ \Omega,\\[2mm]
\Bsigma^{-1}\nabla q \cdot \Bn = - \Bsigma^{-1} \Bh \cdot \Bn,\ \ \ \mbox{on}\ \partial \Omega.
\end{array}
\right.
\end{equation}
Since $\Bh \in C^\infty(\overline{\Omega})$, it follows from the classcial regularity theory for elliptic equation with conormal boundary condition \cite{Lieberman} that $q \in C^\infty(\overline{\Omega})$. Consequently, $\Bv\in C^\infty(\overline{\Omega})$ and $\Bu \in C^\infty(\overline{\Omega})$.  That ends the proof of Theorem \ref{theorem3-3}.

\end{proof}

\begin{remark}
If the domain $\Omega$ is of class $C^{2}$, it was proved in \cite{AS} that $K_T^p(\Omega) = K_T^2(\Omega)$. Making use of this fact and following the same line as above, we can also prove that
\begin{equation}\nonumber
K_{T, \Bsigma}^p(\Omega) =  K_{T, \Bsigma}^2(\Omega),\ \ \ \ 1< p< \infty.
\end{equation}
\end{remark}

%%%%%%%%%%%%%%%%%%%%%%%%%%%%%Subsection Friedrich's inequality%%%%%%%%%%%%%%%%%%%%%%%%%

\subsection{Proof of Theorem \ref{theorem2}}

In this subsection, we will prove Friedrichs inequality \eqref{theorem2-1} involving normal boundary value. As remarked in Section 1, when $\Bsigma $ is identity matrix, \eqref{theorem2-1} has been proved in different function spaces and different domains. For the general case, we can not follow the classical method by setting up one integral equality connecting $\nabla \Bu$, ${\rm div}~\Bu$, ${\rm curl}~(\Bsigma \Bu)$ and $\Bu\cdot \Bn$ easily, which is the method applied in \cite{AS}. Our proof lies on the solvability of \eqref{1-1} and the characterization of the null space $K_{T, \Bsigma}^p(\Omega)$.

\begin{proof}[\bf Proof of Theorem \ref{theorem2}] Let $\BJ = {\rm curl}~(\Bsigma \Bu)$, $\rho = {\rm div}~\Bu$, and $\lambda = \Bu \cdot \Bn$. According to Theorem \ref{theorem1}, there exists a function $\Bu_0 \in W^{m, p}(\Omega)$, such that
\begin{equation}\label{3-1-1}
\left\{ \begin{array}{l}
{\rm curl }~(\Bsigma \Bu_0) = J, \ \ \ \mbox{in}\ \Omega,\\[2mm]
{\rm div}~\Bu_0 = \rho, \ \ \ \mbox{in}\ \Omega,\\[2mm]
\Bu_0 \cdot \Bn = \lambda,\ \ \ \mbox{on}\ \partial \Omega,
\end{array}
\right.
\end{equation}
with the estimate
\begin{equation}\label{3-3-2}
\|\Bu_0\|_{W^{m, p}(\Omega)} \leq C \left( \| \BJ \|_{W^{m-1, p}(\Omega)} + \|\rho \|_{W^{m-1, p}(\Omega)} + \|\lambda \|_{W^{m-\frac1p, p}(\partial \Omega)}          \right).
\end{equation}
Let $\Bv =  \Bu  - \Bu_0$. Then $\Bv \in K_{T, \Bsigma}^p(\Omega)$. As proved in Theorem \ref{lemma3-3}, 
\begin{equation}\label{3-3-3}
\Bv = \sum_{j = 1}^{N_2} < \Bv \cdot \Bn_j,  1>_{\Sigma_j}  \Bsigma^{-1} \nabla^0 q_j^T,
\end{equation}
where $ \{ \sigma^{-1} \nabla^0 q_j^T;\, 1\leq j \leq N_2 \} $ is the basis of $K_{T, \Bsigma}^2(\Omega)$, derived in Subsection 3.2. It follows from Theorem \ref{theorem3-3} that,
$$\Bsigma^{-1} \nabla^0 q_j^T \in K_{T, \Bsigma}^2(\Omega) = K_{T, \Bsigma} (\Omega), $$
which says that $\Bsigma^{-1} \nabla^0 q_j^T \in C^\infty(\overline{\Omega})$.  Let $C_{j} = \| \Bsigma^{-1} \nabla^0 q_j^T\|_{W^{m, p}(\Omega)}< + \infty$,
\begin{equation}\label{3-3-4} \begin{array}{ll}
\| \Bv\|_{W^{m, p}(\Omega)} & \displaystyle \leq C \sum_{j=1}^{N_2} \| \Bv\cdot \Bn_j \|_{W^{-\frac1p, p}(\Sigma_j )} \cdot \| \Bsigma^{-1} \nabla^0  q_j^T\|_{W^{m, p}(\Omega)} \\[4mm]
& \displaystyle \leq  C \sum_{j =1}^{N_2} \left( \| \Bu - \Bu_0\|_{L^p(\Omega)} + \|  {\rm div}~( \Bu - \Bu_0)         \|_{L^p(\Omega)}              \right) \cdot C_{j}\\[4mm]
& \leq C \left(  \| \Bu\|_{L^p(\Omega)} + \| \Bu_0\|_{L^p(\Omega)}          \right),
\end{array}
\end{equation}
where the second inequality is due to \eqref{normal}.

Consequently, $\Bu \in W^{m, p}(\Omega)$ and
\begin{equation}\label{3-3-5} \begin{array}{ll}
\| \Bu \|_{W^{m, p}(\Omega)}&  \leq C \left( \| \Bu\|_{L^p(\Omega)} + \|\BJ \|_{W^{m-1, p}(\Omega)} + \|\rho\|_{W^{m-1, p}(\Omega)} + \| \lambda \|_{W^{m -\frac1p, p}(\partial \Omega)} \right)\\[3mm]
& \leq C \left(  \| \Bu\|_{L^p(\Omega)} + \|{\rm curl}~(\Bsigma \Bu)\|_{W^{m-1, p}(\Omega)} + \| {\rm div}~\Bu \|_{W^{m-1, p}(\Omega)} + \| \Bu \cdot \Bn\|_{W^{m -\frac1p, p}(\partial \Omega)}               \right).
\end{array}
\end{equation}
That ends the proof of Theorem \ref{theorem2}. 

\end{proof}

\begin{remark}\label{remark3-3}
According to the above proof, we can have another estimate for $\| \Bv\|_{W^{m,p}(\Omega)}$,
\begin{equation}\nonumber
\|\Bv\|_{W^{m, p}(\Omega)} \leq C \sum_{j =1}^{N_2}  \left| < \Bv \cdot \Bn_j, \ 1 >_{\Sigma_j} \right| 
\cdot \| \Bsigma^{-1}  \nabla^0 q_j^T\|_{W^{m, p}(\Omega)} \leq C \sum_{j =1}^{N_2} \left| < \Bv \cdot \Bn_j, \ 1 >_{\Sigma_j} \right| .
\end{equation}
Consequently, the $W^{m, p}$-estimate for $\Bu$ can also be written in another form, 
\begin{equation}\label{3-3-5} \begin{array}{l}
\| \Bu \|_{W^{m, p}(\Omega)} \\[2mm] \displaystyle
\leq  C \sum_{j =1}^{N_2}  \left| < \Bu \cdot \Bn_j, \ 1 >_{\Sigma_j} \right|  + 
C \sum_{j =1}^{N_2}  \left| < \Bu_0 \cdot \Bn_j, \ 1 >_{\Sigma_j} \right|  + \| \Bu_0\|_{W^{m, p}(\Omega)} \\ [2mm]
 \leq  \displaystyle C  \sum_{ j =1}^{N_2}   \left| < \Bu \cdot \Bn_j, \ 1 >_{\Sigma_j} \right|    + C\left( \|{\rm curl}~(\Bsigma \Bu)\|_{W^{m-1, p}(\Omega)} + \| {\rm div}~\Bu \|_{W^{m-1, p}(\Omega)} + \|\Bu \cdot \Bn\|_{W^{m -\frac1p, p}(\partial \Omega)}               \right).
\end{array}
\end{equation}

\end{remark}

%%%%%%%%%%%%%%%%%%%%%%%%%%%%%Section 4%%%%%%%%%%%%%%%%%%%%%%%%%%%%%%%
\section{Proof of Theorems \ref{theorem3}- \ref{theorem4}}

\subsection{Standard ${\rm div}-{\rm curl}$ system} 

In this section, we will give the proof of  Theorem \ref{theorem3}. Our basic idea is also to change the system \eqref{1-2} into one standard elliptic equation of divergence form ${\rm div}~( \Bepsilon \nabla q) = f$. The first step is to remove $\BJ$ and $\BLambda$ on the right hand of \eqref{1-2}. Once we remove $\BJ$ and $\BLambda$, the solution desired is one curl free vector with zero tangential boundary value, which is a gradient.  To accomplish the first step, we consider the following standard ${\rm div}$-${\rm curl}$ system in this subsection, 
\begin{equation}\label{4-1-1}
\left\{
\begin{array}{l}
{\rm curl }~\Bv = \BJ, \ \ \ \mbox{in}\ \Omega,\\[2mm]
{\rm div}~\Bv = \rho , \ \ \ \mbox{in} \ \Omega, \\[2mm]
\Bv \times \Bn = \BLambda,\ \ \mbox{on} \ \partial \Omega.
\end{array}
\right.
\end{equation}
The solvability of \eqref{4-1-1} in $H^m$-spaces has been derived in \cite{AV}. However, here we require the solvability in 
$W^{m, p}$-spaces. 
When $\rho =0$ and $\BLambda =0$, the solvability in $W^{m, p}$-spaces has been derived in \cite{AS, KY}.

Define
\begin{equation}\nonumber
X_T^p(\Omega) = \left\{ \Bv \in L^p(\Omega):\ \ {\rm div}~\Bv \in L^p(\Omega),\ \ \ {\rm curl}~\Bv\in L^p(\Omega),\ \ \ \Bv\cdot \Bn =0,\ \ \mbox{on}\ \partial \Omega  \right\},
\end{equation}
with the norm 
\begin{equation}\nonumber
\| \Bv\|_{X_T^p(\Omega) } = \|{\rm curl}~\Bv\|_{L^p(\Omega)} + \|{\rm div}~\Bv\|_{L^p(\Omega)} + \| \Bv\|_{L^p(\Omega)}.
\end{equation}

Define
\begin{equation} \nonumber
V_T^p(\Omega) = \left\{  \Bv \in X_T^p(\Omega): \ \ {\rm div}~ \Bv = 0,\ \ \mbox{in}\ \Omega, \ \ \ < \Bv \cdot \Bn_j, \, 1>_{\Sigma_j}=0,\ 1\leq j \leq N_2 \right\}.
\end{equation}

The first lemma is a generalized version of Lax-Milgram theorem, and the second lemma gives a Inf-Sup condition. Their proof can be found in \cite{AS}. 

\begin{lemma}\label{lemma4-0-1}
Let $X$ and $M$ be two reflexive Banach spaces, and $X^{\prime}$ and $M^{\prime}$ be their dual spaces. Let $a$ be the continuous 
bilinear form defined on $X \times M$, let $A \in \mathcal{L}(X; M^{\prime}) $ and $A^{\prime} \in \mathcal{L}(M; X^{\prime})$ be 
the operators defined by 
\begin{equation}\nonumber
\forall \ v \in X,\ \ \ \forall \ w \in M,\ \ \ \ a(v, w) = <Av,\, w> = <v,\, A^{\prime}w >
\end{equation}
and $V= Ker\, A$. The following statements are equivalent: 
\begin{enumerate}
  \item There exists $\beta > 0$ such that 
  \begin{equation}\nonumber
\inf_{\begin{array}{l} w \in M \\ w\neq 0 \end{array}} \sup_{ \begin{array}{l}  v \in X \\
v \neq 0 \end{array}}  \frac{a (v, w) }{\|v\|_{X} \cdot \|w\|_{M}} \geq \beta. 
 \end{equation}

  \item The operator $A : \frac{X}{V} \mapsto M^{\prime}$ is an isomorphism and $\frac{1}{\beta}$ is the 
  continuity constant of $A^{-1}$. 

\item The operator $A^{\prime}: M \mapsto X^{\prime} \perp V $ is an isomorphism and $\frac{1}{\beta}$ is the continuity constant 
 of $(A^{\prime})^{-1}$. 

  \end{enumerate}

\end{lemma}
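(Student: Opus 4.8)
The plan is to recognize this as (a form of) the Banach--Nečas--Babuška theorem and to prove the three-way equivalence by combining the closed range theorem with the duality between the quotient $X/V$ and the annihilator $V^\perp := \{ f\in X' : \langle f,v\rangle = 0 \ \forall\, v\in V\}$ (which is what the notation $X'\perp V$ means). The first step is the purely formal observation that, for fixed $w\in M$,
\begin{equation}\nonumber
\sup_{v\in X,\ v\neq 0}\frac{a(v,w)}{\|v\|_X} \;=\; \sup_{v\neq 0}\frac{\langle v,\, A'w\rangle}{\|v\|_X} \;=\; \|A'w\|_{X'},
\end{equation}
so that the inf-sup condition in (1) is \emph{exactly} the statement that $\|A'w\|_{X'}\ge\beta\|w\|_M$ for all $w\in M$, i.e. that $A'$ is bounded below with constant $\beta$.

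Next I would prove (1) $\Leftrightarrow$ (3). A bounded-below operator is injective with closed range, so (1) forces $R(A')$ to be closed in $X'$. Since $\langle A'w,v\rangle = \langle Av,w\rangle = 0$ for every $v\in V=\ker A$, one always has $R(A')\subseteq V^\perp$, and the closed range theorem for bounded operators between Banach spaces upgrades this to the \emph{equality} $R(A') = (\ker A)^\perp = V^\perp$ precisely because $R(A')$ is closed. Hence $A'\colon M\to V^\perp$ is a continuous bijection that is bounded below, therefore an isomorphism with $\|(A')^{-1}\|\le 1/\beta$. Conversely, if (3) holds then $\|w\|_M = \|(A')^{-1}A'w\|_M\le \tfrac1\beta\|A'w\|_{X'}$ for all $w$, which is (1); reading (1) with $\beta$ taken as the best (largest) admissible constant, the two directions show that the optimal inf-sup constant equals $1/\|(A')^{-1}\|$, so the constants match as asserted.

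It remains to see (2) $\Leftrightarrow$ (3), and here reflexivity is essential. Since $A$ annihilates $V$, it factors as $A = \bar A\circ\pi$ with $\pi\colon X\to X/V$ the quotient map and $\bar A\colon X/V\to M'$ injective and continuous. Under the canonical isometry $(X/V)'\cong V^\perp$ together with the reflexivity identification $M'' = M$, the Banach-space adjoint of $\bar A$ is exactly the map $A'\colon M\to V^\perp$ of statement (3): indeed $\langle (\bar A)'w,\pi v\rangle = \langle w, Av\rangle = \langle v, A'w\rangle$. Therefore $\bar A$ is an isomorphism iff its adjoint $A'\colon M\to V^\perp$ is, with $\|\bar A^{-1}\| = \|(A')^{-1}\|$, this being the standard fact that $\|T^{-1}\| = \|(T')^{-1}\|$ for an isomorphism $T$ of Banach spaces (proved via Hahn--Banach). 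Unwinding, ``$\bar A$ an isomorphism with $\|\bar A^{-1}\| = 1/\beta$'' is precisely statement (2), and the trivial implications (2) $\Rightarrow$ (1) and (3) $\Rightarrow$ (1) close the cycle.

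I expect the main obstacle to be bookkeeping rather than ideas: keeping the continuity constant $1/\beta$ exact through every equivalence (one must read (1) with the optimal $\beta$ and be careful that ``bounded below by $\beta$'' transfers to ``$\|(A')^{-1}\| = 1/\beta$'' with equality, not merely $\le$), and invoking the closed range theorem correctly to promote $R(A')\subseteq V^\perp$ to an equality — without reflexivity and closedness of the range one only obtains weak-$*$ density of $R(A')$ in $V^\perp$, and it is exactly the reflexivity hypothesis plus the bounded-below assumption that make the three statements genuinely equivalent with the same constant.
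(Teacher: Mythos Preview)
Your proposal is correct and follows the standard Banach--Ne\v{c}as--Babu\v{s}ka argument: rewriting the inf-sup condition as $\|A'w\|_{X'}\ge\beta\|w\|_M$, invoking the closed range theorem to identify $R(A')=V^\perp$, and then using the duality $(X/V)'\cong V^\perp$ together with reflexivity to pass between $\bar A$ and $A'$.

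However, you should know that the paper does \emph{not} give its own proof of this lemma. It is stated as a preliminary tool and the text simply says that its proof ``can be found in [AS]'' (Amrouche--Seloula). So there is nothing to compare against within the paper itself; you have supplied a genuine proof where the paper only gives a citation. Your argument is essentially the one underlying the result in [AS] and in the standard references, so there is no conflict---just be aware that in the paper's own logic this lemma is imported as a black box rather than proved.
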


\begin{lemma}\label{lemma4-0-2} Let $1 <  p < +\infty$. 
The following Inf-Sup condition holds: there exists a constant $\beta > 0$, such that 

\begin{equation}\label{4-0-1}
\inf_{\begin{array}{l} \Bvarphi \in V_{T}^{p^{\prime}}(\Omega) \\ \Bvarphi \neq 0 \end{array}}
\sup_{\begin{array}{l}   \Bxi \in V_T^p(\Omega) \\ \Bxi \neq 0 \end{array}} 
\frac{\int_{\Omega} {\rm curl}~\Bxi \cdot {\rm curl }~\Bvarphi \, dx   }{\|\Bxi\|_{X_T^p(\Omega)}  \cdot \|\Bvarphi\|_{X_T^{p^{\prime}}(\Omega)}   } \geq \beta. 
\end{equation}
\end{lemma}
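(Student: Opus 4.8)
The plan is to reduce the inf--sup condition to two structural facts about the closed subspace
\begin{equation}\nonumber
\mathcal{Y}^q(\Omega):=\left\{\boldsymbol{g}\in L^q(\Omega):~\div~\boldsymbol{g}=0~\mbox{in}~\Omega,~<\boldsymbol{g}\cdot\Bn,1>_{\Gamma_i}=0,~0\le i\le N_1\right\},\qquad 1<q<\infty:
\end{equation}
first, that $\curl$ is a Banach--space isomorphism of $V_T^q(\Omega)$ onto $\mathcal{Y}^q(\Omega)$; and second, that $\mathcal{Y}^{p'}(\Omega)$ is isomorphic to the dual of $\mathcal{Y}^p(\Omega)$ under the $L^2$-pairing, with uniformly equivalent norms. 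Granting these, the proof is short, and essentially all the content sits in the second fact. One may also phrase the whole argument as a verification of statement (3) of Lemma~\ref{lemma4-0-1}, but the direct construction of a test field is more transparent.

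\emph{Step 1 (the isomorphism $\curl:V_T^q(\Omega)\to\mathcal{Y}^q(\Omega)$).} If $\Bxi\in V_T^q(\Omega)$ then $\div~\curl~\Bxi=0$, and, using a cut-off $\mu_i\in C^\infty(\overline{\Omega})$ equal to $1$ near $\Gamma_i$ and to $0$ near $\Gamma_k$ ($k\neq i$) — so $\nabla\mu_i$ vanishes near $\partial\Omega$ — one gets $<\curl~\Bxi\cdot\Bn,1>_{\Gamma_i}=\int_\Omega\curl~\Bxi\cdot\nabla\mu_i~dx=-<\Bxi\times\Bn,\nabla\mu_i>_{\partial\Omega}=0$; hence $\curl~\Bxi\in\mathcal{Y}^q(\Omega)$ with $\|\curl~\Bxi\|_{L^q(\Omega)}\le\|\Bxi\|_{X_T^q(\Omega)}$. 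Conversely, for $\boldsymbol{g}\in\mathcal{Y}^q(\Omega)$ Lemma~\ref{lemma3-1} (with exponent $q$) produces $\Bpsi\in W^{1,q}(\Omega)$ with $\curl~\Bpsi=\boldsymbol{g}$, $\div~\Bpsi=0$, $\Bpsi\cdot\Bn=0$, $<\Bpsi\cdot\Bn_j,1>_{\Sigma_j}=0$ and $\|\Bpsi\|_{W^{1,q}(\Omega)}\le C\|\boldsymbol{g}\|_{L^q(\Omega)}$, so $\Bpsi\in V_T^q(\Omega)$ and $\curl$ is onto. It is injective, since $\curl~\Bxi=0$ together with $\div~\Bxi=0$, $\Bxi\cdot\Bn=0$ and vanishing periods $<\Bxi\cdot\Bn_j,1>_{\Sigma_j}=0$ places $\Bxi$ in $K_T^q(\Omega)=K_T(\Omega)$ with zero periods, hence $\Bxi=0$ by the characterization of $K_T(\Omega)$ in \cite{KY}. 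By the open mapping theorem, $\|\Bxi\|_{X_T^q(\Omega)}\le C\|\curl~\Bxi\|_{L^q(\Omega)}$ for all $\Bxi\in V_T^q(\Omega)$.

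\emph{Step 2 (duality, test field, conclusion).} The $L^q$-theory of vector potentials gives the Helmholtz-type decomposition $L^q(\Omega)=\mathcal{Y}^q(\Omega)\oplus\nabla\mathcal{N}^q(\Omega)$, $1<q<\infty$, where $\mathcal{N}^q(\Omega)=\{\theta\in W^{1,q}(\Omega):\theta~\mbox{is constant on each}~\Gamma_i\}$, with bounded projections; the two summands annihilate each other across the $L^2$-pairing (for $\theta\in\mathcal{N}^{q'}$ and $\boldsymbol{g}\in\mathcal{Y}^q$, $\int_\Omega\nabla\theta\cdot\boldsymbol{g}~dx=-\int_\Omega\theta\div~\boldsymbol{g}~dx+\sum_i c_i<\boldsymbol{g}\cdot\Bn,1>_{\Gamma_i}=0$), so $(\mathcal{Y}^p(\Omega))'=\mathcal{Y}^{p'}(\Omega)$ with equivalent norms. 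Now fix $\Bvarphi\in V_T^{p'}(\Omega)$, $\Bvarphi\neq0$; by Step~1, $\curl~\Bvarphi\in\mathcal{Y}^{p'}(\Omega)$, $\curl~\Bvarphi\neq0$, and $\|\Bvarphi\|_{X_T^{p'}(\Omega)}\le C_2\|\curl~\Bvarphi\|_{L^{p'}(\Omega)}$. By the duality there exists $\boldsymbol{g}\in\mathcal{Y}^p(\Omega)$ with $\|\boldsymbol{g}\|_{L^p(\Omega)}=1$ and $\int_\Omega\boldsymbol{g}\cdot\curl~\Bvarphi~dx\ge c_0\|\curl~\Bvarphi\|_{L^{p'}(\Omega)}$, $c_0>0$ independent of $\Bvarphi$. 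Let $\Bxi\in V_T^p(\Omega)$ be the vector potential of $\boldsymbol{g}$ from Step~1, so $\curl~\Bxi=\boldsymbol{g}$ and $\|\Bxi\|_{X_T^p(\Omega)}\le C_1$. Then
\begin{equation}\nonumber
\frac{\int_\Omega\curl~\Bxi\cdot\curl~\Bvarphi~dx}{\|\Bxi\|_{X_T^p(\Omega)}\,\|\Bvarphi\|_{X_T^{p'}(\Omega)}}\ge\frac{c_0\,\|\curl~\Bvarphi\|_{L^{p'}(\Omega)}}{C_1\,C_2\,\|\curl~\Bvarphi\|_{L^{p'}(\Omega)}}=:\beta>0,
\end{equation}
and $\beta$ is independent of $\Bvarphi$; taking the infimum over $\Bvarphi\in V_T^{p'}(\Omega)\setminus\{0\}$ yields \eqref{4-0-1}.

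The one genuinely nontrivial ingredient is the decomposition/duality $(\mathcal{Y}^p(\Omega))'\cong\mathcal{Y}^{p'}(\Omega)$ in Step~2: this is where the topology of $\Omega$ (the cuts $\Sigma_j$, the components $\Gamma_i$) and the $L^p$- rather than $L^2$-framework enter, and it rests on the $L^p$-solvability of the mixed problem $\Delta\theta=\div~\boldsymbol{g}$ with $\theta$ locally constant on $\partial\Omega$ and prescribed total fluxes through the $\Gamma_i$ (the capacitance matrix being invertible modulo constants gives injectivity of the gradient summand, and elliptic $L^p$-estimates give the boundedness of the projections). Everything else — Step~1 and the assembly in Step~2 — is routine bookkeeping built on Lemma~\ref{lemma3-1} and the finite-dimensional structure of $K_T(\Omega)$.
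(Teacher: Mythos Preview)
The paper itself does not prove this lemma, deferring instead to \cite{AS}. Your argument is correct and captures exactly the mechanism behind the result: the norm equivalence $\|\Bxi\|_{X_T^q(\Omega)}\simeq\|\curl\,\Bxi\|_{L^q(\Omega)}$ on $V_T^q(\Omega)$ (your Step~1, via Lemma~\ref{lemma3-1} and the fact that $K_T^q(\Omega)\cap V_T^q(\Omega)=\{0\}$), combined with the duality $(\mathcal{Y}^p)'\cong\mathcal{Y}^{p'}$ coming from the Helmholtz-type splitting $L^q=\mathcal{Y}^q\oplus\nabla\mathcal{N}^q$ with mutually annihilating summands. Your verification that $(\nabla\mathcal{N}^p)^\perp=\mathcal{Y}^{p'}$ and that the projectors are bounded is the genuine $L^p$-content, and your sketch of it (Dirichlet $L^p$-theory plus correction by the finite-dimensional $K_N(\Omega)$ basis to kill the boundary fluxes) is sound.

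This is in the same spirit as the treatment in \cite{AS}; there the argument is often phrased through condition~(3) of Lemma~\ref{lemma4-0-1} (surjectivity of the adjoint onto the polar of the kernel) rather than through the explicit construction of a test field, but both presentations reduce to the two structural facts you isolated. One could also shortcut Step~2 by taking $\boldsymbol{g}$ to be the $\mathcal{Y}^p$-projection of $|\curl\,\Bvarphi|^{p'-2}\curl\,\Bvarphi$, observing that the discarded gradient part is orthogonal to $\curl\,\Bvarphi$; this is the same computation in different clothing.
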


\begin{lemma}\label{lemma4-0-2-1} Let $1 <  p < +\infty $. For every function $\Bv \in X_T^p(\Omega)$, it holds that 
\begin{equation}\label{4-0-4}
\|\Bv\|_{W^{1, p}(\Omega) } \leq C \| \Bv \|_{X_T^p(\Omega)} + C \sum_{j =1}^{N_2} 
\left| < \Bv \cdot \Bn_j, \, 1>_{\Sigma_j}    \right|.
\end{equation}
\end{lemma}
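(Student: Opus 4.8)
The plan is a Helmholtz‑type decomposition of $\Bv$: control the curl‑part and the gradient‑part by $\|\curl \Bv\|_{L^p}$ and $\|\div \Bv\|_{L^p}$, and control the harmonic remainder — which lives in the $N_2$‑dimensional space $K_T^p(\Omega)$ — by its fluxes through the cutting surfaces $\Sigma_j$, which is precisely where the terms $\langle \Bv \cdot \Bn_j, 1\rangle_{\Sigma_j}$ enter. (One could alternatively invoke \eqref{Sobolev1} with $m=1$, since $\Bv \cdot \Bn = 0$ lies trivially in $W^{1-\frac1p,p}(\partial\Omega)$, which even yields the stronger bound $\|\Bv\|_{W^{1,p}(\Omega)}\le C\|\Bv\|_{X_T^p(\Omega)}$; but the decomposition below stays within the tools of Section 3 and produces exactly the form \eqref{4-0-4} needed afterwards.)

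First set $\Bz = \curl \Bv \in L^p(\Omega)$. Then $\div \Bz = 0$ in $\Omega$, and $\langle \Bz \cdot \Bn, 1\rangle_{\Gamma_i}=0$ for each $i$: choosing $\mu_i \in C^\infty(\overline\Omega)$ equal to $1$ near $\Gamma_i$ and $0$ near the other components of $\partial\Omega$ (so that $\nabla \mu_i = 0$ on $\partial\Omega$), the definition of the normal trace gives $\langle \Bz \cdot \Bn, 1\rangle_{\Gamma_i}=\langle \Bz \cdot \Bn, \mu_i\rangle_{\partial\Omega}=\int_\Omega \curl \Bv \cdot \nabla\mu_i\,dx$, and testing the definition of $\Bv\times\Bn$ against $\nabla\mu_i$ identifies this with $-\langle \Bv\times\Bn, \nabla\mu_i\rangle_{\partial\Omega}=0$. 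Hence Lemma \ref{lemma3-1} provides $\Bpsi \in W^{1,p}(\Omega)$ with $\curl \Bpsi = \curl \Bv$, $\div \Bpsi = 0$, $\Bpsi\cdot\Bn=0$ on $\partial\Omega$, $\langle \Bpsi\cdot\Bn,1\rangle_{\Sigma_j}=0$ for all $j$, and $\|\Bpsi\|_{W^{1,p}(\Omega)}\le C\|\curl \Bv\|_{L^p(\Omega)}$. Since $\Bv\cdot\Bn=0$ forces $\int_\Omega \div \Bv\,dx = 0$, the conormal problem $\Delta\phi = \div \Bv$ in $\Omega$, $\nabla\phi\cdot\Bn=0$ on $\partial\Omega$ is solvable with $\phi\in W^{2,p}(\Omega)$ and $\|\nabla\phi\|_{W^{1,p}(\Omega)}\le C\|\div \Bv\|_{L^p(\Omega)}$ by the classical elliptic theory on the smooth domain $\Omega$. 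Then $\Bh := \Bv - \Bpsi - \nabla\phi$ satisfies $\curl \Bh = 0$, $\div \Bh = 0$ in $\Omega$, $\Bh\cdot\Bn=0$ on $\partial\Omega$, i.e. $\Bh\in K_T^p(\Omega)$.

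Now apply Theorem \ref{lemma3-3} with $\Bsigma = Id$ together with Theorem \ref{theorem3-3}: $K_T^p(\Omega) = K_T(\Omega)$ is spanned by the smooth fields $\nabla^0 q_j^T$, $1\le j\le N_2$, normalized by $\langle \nabla^0 q_j^T \cdot \Bn_k, 1\rangle_{\Sigma_k} = \delta_{jk}$. As this space is finite‑dimensional, $\|\Bh\|_{W^{1,p}(\Omega)}\le C\sum_{j=1}^{N_2}|\langle \Bh\cdot\Bn_j,1\rangle_{\Sigma_j}|$, and $\langle \Bh\cdot\Bn_j,1\rangle_{\Sigma_j} = \langle \Bv\cdot\Bn_j,1\rangle_{\Sigma_j} - \langle \Bpsi\cdot\Bn_j,1\rangle_{\Sigma_j} - \langle \nabla\phi\cdot\Bn_j,1\rangle_{\Sigma_j}$; the second term vanishes by the properties of $\Bpsi$, and the third is bounded by $C\|\nabla\phi\|_{W^{1,p}(\Omega)}\le C\|\div \Bv\|_{L^p(\Omega)}$ via the $\Sigma_j$‑analogue of \eqref{normal}. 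Summing $\|\Bv\|_{W^{1,p}(\Omega)}\le \|\Bpsi\|_{W^{1,p}(\Omega)} + \|\nabla\phi\|_{W^{1,p}(\Omega)} + \|\Bh\|_{W^{1,p}(\Omega)}$ and collecting the estimates yields \eqref{4-0-4}.

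The bulk of this is routine; the two delicate points are (i) verifying that $\curl \Bv$ has zero flux through each $\Gamma_i$ for a merely $L^p$ field $\Bv$ — done via the cut‑offs $\mu_i$ above, which is what licenses the use of Lemma \ref{lemma3-1}; and (ii) handling the surface fluxes $\langle\,\cdot\,\Bn_j,1\rangle_{\Sigma_j}$ across the open cutting surfaces $\Sigma_j$: one must note they are well defined (the divergences involved put no singular layer on $\Sigma_j$, and $\nabla\phi\cdot\Bn=0$ along $\partial\Sigma_j\subset\partial\Omega$) and obey an \eqref{normal}‑type estimate, so that the contributions of $\Bpsi$ and $\nabla\phi$ to the fluxes are absorbed into $\|\Bv\|_{X_T^p(\Omega)}$.
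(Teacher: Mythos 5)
Your proposal is correct and is essentially the route the paper itself indicates: the paper proves Lemma \ref{lemma4-0-2-1} by referring to Remark \ref{remark3-3}, i.e.\ splitting $\Bv$ into a particular solution of the div--curl problem with prescribed normal trace (constructed, exactly as in the proof of Theorem \ref{theorem1} with $\Bsigma=Id$, from the vector potential of Lemma \ref{lemma3-1} plus a conormal/Neumann problem) and a harmonic field in $K_T^p(\Omega)=K_T(\Omega)$ controlled by its fluxes through the cuts $\Sigma_j$. The differences are only presentational: you unpack that construction explicitly (verifying the zero-flux condition for $\curl \Bv$ by the same cutoff device the paper uses after Theorem \ref{theorem1}) instead of quoting Theorem \ref{theorem1} and Remark \ref{remark3-3}, and you also note the even shorter derivation from \eqref{Sobolev1}, which is the ``implied in \cite{AS, KY}'' route the paper mentions.
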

The proof of Lemma \ref{lemma4-0-2-1} is implied in \cite{AS, KY}. It also can be found in Remark \ref{remark3-3}.

\begin{lemma}\label{lemma4-0-3} Let $ 1 < p < +\infty$. 
Assume that $\BJ \in L^p(\Omega)$, $\BLambda \in W^{1- \frac{1}{p}, p }(\partial \Omega)$, and $(\BJ, \BLambda)$ satisfy the following 
compatibility conditions
\begin{equation}\label{compatibility-4-1}
{\rm div}~\BJ = 0, \ \ \mbox{in}\ \Omega, \ \ \ \ \BLambda \cdot \Bn = 0,\ \ \ \ \BJ \cdot \Bn = {\rm div}_T ~\BLambda,\ \ \mbox{on}\ \partial \Omega,
\end{equation}
and 
\begin{equation}\label{compatibility-4-2}
\int_{\Omega} \BJ \cdot \Bvarphi \, dx = - \int_{\partial \Omega} \BLambda \cdot \Bvarphi \, dS,\ \ \ \ \forall \ \Bvarphi \in K_T(\Omega). 
\end{equation}
Then the following problem 
\begin{equation}\label{4-0-5}
\left\{ \begin{array}{l}
- \Delta \Bxi = \BJ,\ \ \ \ \mbox{in}\ \Omega, \\[2mm]
{\rm div}~ \Bxi = 0,\ \ \ \ \mbox{in}\ \Omega, \\[2mm]
\Bxi \cdot \Bn = 0,\ \ \ \ \mbox{on}\ \partial \Omega,\\[2mm]
{\rm curl }~\Bxi \times \Bn = \BLambda, \ \ \ \ \mbox{on}\ \partial \Omega,\\[2mm]
< \Bxi \cdot \Bn_j, \, 1>_{\Sigma_j} = 0,\ \ \ \ 1\leq j \leq N_2
\end{array}
\right.
\end{equation}
has a unique solution $\Bxi \in W^{2, p}(\Omega)$, and we have the estimate 
\begin{equation}\label{4-0-6}
\|\Bxi \|_{W^{2, p}(\Omega)} 
\leq C \left(  \|\BJ \|_{L^p(\Omega)} + \|\BLambda \|_{W^{1 - \frac1p, p }(\partial \Omega) } \right). 
\end{equation}
\end{lemma}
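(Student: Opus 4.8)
The plan is to realize $\Bxi$ as the solution of an auxiliary variational problem posed on the divergence-free space $V_T^p(\Omega)$, then to upgrade the regularity via elliptic estimates. First I would set up the weak formulation: look for $\Bxi \in V_T^p(\Omega)$ such that
\begin{equation}\nonumber
\int_\Omega \curl \Bxi \cdot \curl \Bvarphi \, dx = \int_\Omega \BJ \cdot \Bvarphi \, dx + \int_{\partial\Omega} \BLambda \cdot \Bvarphi \, dS, \qquad \forall\, \Bvarphi \in V_T^{p'}(\Omega).
\end{equation}
The right-hand side defines a bounded linear functional on $V_T^{p'}(\Omega)$, and one must check it annihilates the part of $V_T^{p'}$ on which the bilinear form degenerates; since on $V_T^{p'}$ we have $\curl \Bvarphi = 0$ precisely when $\Bvarphi \in K_T^{p'}(\Omega) = K_T(\Omega)$ (using \eqref{theorem1-1}-type characterizations and $\div \Bvarphi = 0$, $\Bvarphi\cdot\Bn = 0$, zero periods), the compatibility condition \eqref{compatibility-4-2} is exactly what is needed for this functional to be well-defined modulo $K_T$. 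Then Lemma \ref{lemma4-0-2} (the Inf-Sup condition) together with Lemma \ref{lemma4-0-1} (generalized Lax-Milgram) produces a solution $\Bxi \in V_T^p(\Omega)$, unique modulo $K_T^p(\Omega)$, with $\|\Bxi\|_{X_T^p(\Omega)}$ controlled by $\|\BJ\|_{L^p(\Omega)} + \|\BLambda\|_{W^{1-\frac1p,p}(\partial\Omega)}$; normalizing by the periods $\langle \Bxi \cdot \Bn_j, 1\rangle_{\Sigma_j}$ and using Lemma \ref{lemma4-0-2-1} gives a genuine $W^{1,p}$ bound and pins down uniqueness.

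Next I would unwind the weak formulation into the strong PDE. Testing against $\Bvarphi \in C_0^\infty(\Omega)$ with $\div \Bvarphi = 0$ yields $\curl \curl \Bxi = \BJ + \nabla\pi$ for some pressure $\pi$; testing with general divergence-free fields and integrating by parts, using the boundary conditions $\Bxi\cdot\Bn = 0$ and the natural condition that emerges, identifies $\curl\Bxi \times \Bn = \BLambda$ on $\partial\Omega$ and forces $\pi$ to be constant (here $\div \BJ = 0$ and $\BLambda\cdot\Bn = 0$, $\BJ\cdot\Bn = \div_T \BLambda$ from \eqref{compatibility-4-1} are used to make the boundary integrals consistent). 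Combined with $\div\Bxi = 0$, the identity $\curl\curl\Bxi = -\Delta\Bxi + \nabla\div\Bxi = -\Delta\Bxi$ gives $-\Delta\Bxi = \BJ$ in $\Omega$. Thus $\Bxi$ solves \eqref{4-0-5}.

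Finally, for the $W^{2,p}$ estimate \eqref{4-0-6}, I would view \eqref{4-0-5} as an elliptic system for $\Bxi$: the equation $-\Delta\Bxi = \BJ$ with the boundary conditions $\Bxi\cdot\Bn = 0$ and $\curl\Bxi\times\Bn = \BLambda$ forms an elliptic boundary value problem in the sense of Agmon-Douglis-Nirenberg (this is the "electric" normal-type boundary condition for the vector Laplacian). Given $\BJ \in L^p(\Omega)$ and $\BLambda \in W^{1-\frac1p,p}(\partial\Omega)$, ADN theory yields $\Bxi \in W^{2,p}(\Omega)$ with
\begin{equation}\nonumber
\|\Bxi\|_{W^{2,p}(\Omega)} \leq C\left( \|\BJ\|_{L^p(\Omega)} + \|\BLambda\|_{W^{1-\frac1p,p}(\partial\Omega)} + \|\Bxi\|_{L^p(\Omega)} \right),
\end{equation}
and the lower-order term $\|\Bxi\|_{L^p(\Omega)}$ is absorbed using the uniqueness already established (if no uniqueness, one argues by contradiction/compactness; but here the periods and $K_T$-orthogonality kill the kernel, so a direct bound $\|\Bxi\|_{L^p} \le C\|\Bxi\|_{X_T^p}$ suffices).

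\textbf{Main obstacle.} The delicate point is the passage from the weak formulation on $V_T^p$ to the strong form, specifically recovering the \emph{natural} boundary condition $\curl\Bxi\times\Bn = \BLambda$ and verifying that the Lagrange-multiplier/pressure term is constant. This requires care with the duality pairings for $\Bv\times\Bn$ and $\Bv\cdot\Bn$ on $\partial\Omega$, and using the surface identity $\curl\Bxi\cdot\Bn = \div_T(\Bxi\times\Bn)$ together with the compatibility relations in \eqref{compatibility-4-1}; handling the harmonic-field directions $K_T(\Omega)$ consistently throughout (so that the constructed $\Bxi$, after period normalization, is the unique one) is where the multiply-connected topology genuinely enters.
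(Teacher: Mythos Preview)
Your architecture matches the paper's (variational problem on $V_T^p$, Inf-Sup plus generalized Lax--Milgram, then recovery of the strong system and a regularity bootstrap), but the weak-to-strong passage has a real gap and the role of \eqref{compatibility-4-2} is misplaced. On $V_T^{p'}(\Omega)$ the periods across the cuts $\Sigma_j$ vanish by definition, so $K_T(\Omega)\cap V_T^{p'}(\Omega)=\{0\}$: the Inf-Sup condition of Lemma~\ref{lemma4-0-2} already yields a \emph{unique} $\Bxi\in V_T^p(\Omega)$ with no quotient by $K_T$ and no period normalization needed. The compatibility conditions are not used to set up the variational problem; the paper uses them for a different purpose, namely to \emph{extend} the variational identity from test functions in $V_T^{p'}(\Omega)$ to all of $X_T^{p'}(\Omega)$, by writing an arbitrary $\Bphi\in X_T^{p'}$ as $\tilde{\Bphi}+\nabla\chi+\sum_j c_j\,\nabla^0 q_{j,*}^T$ with $\tilde{\Bphi}\in V_T^{p'}$ and checking via \eqref{compatibility-4-1}--\eqref{compatibility-4-2} that the right-hand side is unchanged under this decomposition.

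This extension step is precisely what your argument lacks. Divergence-free fields in $C_0^\infty(\Omega)$ need not have zero periods across $\Sigma_j$ (a compactly supported solenoid threading a handle of $\Omega$ has nonzero flux), so they do not all lie in $V_T^{p'}(\Omega)$ and you cannot test with them directly; your pressure $\pi$ would therefore come accompanied by additional Lagrange multipliers for the period constraints, and you give no mechanism to eliminate either. Once the extension to $X_T^{p'}(\Omega)\supset C_0^\infty(\Omega)$ is in hand, testing with $C_0^\infty$ gives $\curl\curl\,\Bxi=\BJ$ with no pressure at all, and $\curl\Bxi\times\Bn=\BLambda$ drops out by comparing the extended identity with a raw integration by parts against an arbitrary $\Bvarphi\in C^\infty(\overline{\Omega})$. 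Finally, the paper does not invoke ADN for \eqref{4-0-6}; it sets $\Bv=\curl\Bxi$, applies \eqref{Sobolev2} to $\Bv$ (using $\curl\Bv=\BJ$, $\div\Bv=0$, $\Bv\times\Bn=\BLambda$), and then applies \eqref{Sobolev1} to $\Bxi$. Your ADN route is legitimate but external to the paper's self-contained toolkit.
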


\begin{proof}
First, let us consider the following problem: \, find $\Bxi \in V_T^p(\Omega)$ such that 
\begin{equation}\label{4-0-9}
\forall\ \Bphi \in V_T^{p^{\prime}} (\Omega),\ \ \ \ \ \int_{\Omega} {\rm curl}~\Bxi \cdot {\rm curl}~\Bphi \, dx
= \int_{\Omega} \BJ \cdot \Bphi \, dx + \int_{\partial \Omega} \Lambda \cdot \Bphi \, dS. 
\end{equation}

According to Lemma \ref{lemma4-0-2}, the left hand of \eqref{4-0-9} satisfies the Inf-Sup condition. On the other hand, 
\begin{equation}\nonumber \begin{array}{l}
\left| \int_{\Omega} \BJ \cdot \Bphi \, dx + \int_{\partial \Omega} \BLambda \cdot \Bphi \, dS   \right| \\[3mm]
\leq C 
\| \BJ \|_{L^p(\Omega)} \cdot \| \Bphi \|_{W^{1, p^{\prime}}(\Omega)}  + C \|\BLambda \|_{W^{1 - \frac1p, p} (\partial \Omega)} 
\cdot \|\Bphi \|_{W^{1, p^{\prime}}(\Omega)} \\[3mm]
\leq C \left( \|\BJ \|_{L^p(\Omega)} + \|\BLambda\|_{W^{1 - \frac1p, p} (\partial \Omega)} \right) \cdot \|\Bphi \|_{X_T^{p^\prime}(\Omega)},
\end{array}
\end{equation}
due to Lemma \ref{lemma4-0-2-1}. 
By virtue of Lemma \ref{lemma4-0-1}, the problem \eqref{4-0-9} has a unique solution $\Bxi \in V_T^p(\Omega)$, with the estimate
\begin{equation}\label{4-0-11}
\|\Bxi \|_{ X_T^p (\Omega)} \leq C \left( \| \BJ \|_{L^p(\Omega) } + \| \BLambda\|_{W^{ 1 -\frac1p, p} (\partial \Omega) }  \right).
\end{equation} 
Since $\Bxi \in V_T^p(\Omega)$, $< \Bxi \cdot \Bn_j, \,  1>_{\Sigma_j}= 0$, $1 \leq j \leq N_2$. According to Lemma \ref{lemma4-0-2-1},  
\begin{equation}\label{4-0-11-1}
\|\Bxi \|_{W^{1, p}(\Omega)} \leq C \| \Bxi\|_{X_T^p(\Omega)} \leq C\left( \|\BJ \|_{L^p(\Omega) } + \|\BLambda\|_{W^{ 1 -\frac1p, p} (\partial \Omega) }  \right).
\end{equation}

Next, we extend \eqref{4-0-9} to any test function $\Bphi$ in $X_T^{p^{\prime}}(\Omega) $. Consider the unique solution(up to an additive constant) $\chi \in 
W^{1, p^{\prime}}(\Omega)$ of the Neumann problem
\begin{equation}\nonumber
\left\{ \begin{array}{l}
\Delta \chi = {\rm div}~\Bphi,\ \ \ \ \mbox{in}\ \Omega,\\[2mm]
\frac{\partial \chi}{\partial \Bn} = 0,\ \ \ \ \mbox{on} \ \partial \Omega.
\end{array}
\right.
\end{equation}
Then we set 
\begin{equation}\nonumber
\tilde{\Bphi} = \Bphi - \nabla \chi - \sum_{j =1}^{N_2} \left< (\Bphi - \nabla \chi)\cdot \Bn_j,\, 1 \right>_{\Sigma_j } \nabla^0 q_{j, *}^T,
\end{equation}
where $\left\{  \nabla^0 q_{j, *}^T;\, 1 \leq j \leq N_2 \right\}$ is the basis of $K_T^2(\Omega)$, constructed as in Subsection 3.2 for the case $\Bsigma= Id$. Observe that $\tilde{\Bphi} \in V_T^{p^{\prime}}(\Omega)$, hence
\begin{equation} \label{4-0-12} \begin{array}{ll}
& \displaystyle \int_{\Omega} {\rm curl }~\Bchi \cdot {\rm curl}~\Bphi \, dx \\[3mm]
= & \displaystyle \int_{\Omega} {\rm curl}~\Bchi \cdot {\rm curl}~\tilde{\Bphi}\, dx \\[3mm]
= &\displaystyle \int_{\Omega} \BJ \cdot \tilde{\Bphi} \, dx + \int_{\partial \Omega} \BLambda \cdot \tilde{\Bphi} \, dS\\[3mm]
= &\displaystyle \int_{\Omega} \BJ \cdot \Bphi \, dx + \int_{\partial \Omega} \BLambda \cdot \Bphi \, dS
- \int_{\Omega} \BJ \cdot \nabla \chi \, dx - \int_{\partial \Omega} \BLambda \cdot \nabla \chi \, dS \\[3mm]
& \ \ \displaystyle - \sum_{j =1}^{N_2} \left< ( \Bphi - \nabla \chi) \cdot \Bn_j, \, 1 \right>_{\Sigma_j} 
\left( \int_{\Omega} \BJ \cdot \nabla^0 q_{j, \star}^T \, dx + \int_{\partial \Omega} \BLambda \cdot \nabla^0 q_{j, *}^T \, dS \right).
\end{array}
\end{equation}
Herein, due to the compatibility conditions \eqref{compatibility-4-1}-\eqref{compatibility-4-2} and the fact 
$\nabla^0 q_{j, *}^T \in K_T(\Omega)$, 
\begin{equation}\nonumber
\int_{\Omega} \BJ \cdot \nabla \chi \, dx + \int_{\partial \Omega} \BLambda \cdot \nabla \chi \, dS 
= \int_{\partial \Omega} ( \BJ \cdot \Bn) \cdot \chi  \, dS - \int_{\partial \Omega} {\rm div}_T \BLambda \cdot \chi \, dS = 0,
\end{equation}
and 
\begin{equation}\nonumber
\int_{\Omega} \BJ \cdot \nabla^0 q_{j, *}^T \, dx  + \int_{\partial \Omega} \BLambda \cdot \nabla^0 q_{j, *}^T 
\, dS = 0.
\end{equation}
Hence \eqref{4-0-9} holds for every test function $\Bphi \in X_{T}^{p^{\prime}}(\Omega)$. 

Since $C_0^\infty(\Omega) \subset X_T^{p^{\prime}}(\Omega)$, \eqref{4-0-9} implies that 
\begin{equation}
-\Delta \Bxi = {\rm curl}{\rm curl}~\Bxi - \nabla {\rm div}~\Bxi = {\rm curl}{\rm curl}~ \Bxi = \BJ,\ \ \ \mbox{in}\ \Omega.
\end{equation} 
For every function $\Bvarphi \in C^\infty(\overline{\Omega})$. Choose one $\tilde{\Bvarphi} \in C^\infty(\overline{\Omega})$, such that $\tilde{\Bvarphi} = (\Bvarphi \cdot \Bn) \Bn$ on $\partial \Omega$. Then 
\begin{equation}\nonumber
\Bphi = \Bvarphi - \tilde{\Bvarphi} \in X_T^{p^{\prime}}(\Omega).
\end{equation} 
On one hand, 
\begin{equation}\label{4-0-15} \begin{array}{ll}
\displaystyle \int_{\Omega} {\rm curl}~\Bxi \cdot {\rm curl}~\Bvarphi \, dx & \displaystyle = \int_{\Omega} {\rm curl}{\rm curl}~\Bxi \cdot \Bvarphi \, dx 
+ \int_{\partial \Omega} \left( {\rm curl }~\Bxi \times \Bn\right) \cdot \Bvarphi \, dS \\[3mm]
& \displaystyle 
= \int_{\Omega} \BJ \cdot \Bvarphi \, dx + \int_{\partial \Omega} \left( {\rm curl }~\Bxi \times \Bn\right) \cdot \Bvarphi \, dS .
\end{array}
\end{equation}

On the other hand, 
\begin{equation}\label{4-0-16}  
\begin{array}{l}
\displaystyle \int_{\Omega} {\rm curl }~\Bxi \cdot {\rm curl}~\Bvarphi \, dx \\[3mm]
 \displaystyle = \int_{\Omega} {\rm curl}~\Bxi \cdot {\rm curl}~(\tilde{\Bvarphi} + \Bphi )\, dx \\[3mm]
\displaystyle = \int_{\Omega} \BJ \cdot \tilde{\Bvarphi}\, dx + \int_{\partial \Omega} ({\rm curl}~\Bxi \times \Bn ) \cdot \tilde{\Bvarphi} \, dS
 + \int_{\Omega} \BJ \cdot \Bphi \, dx + \int_{\partial \Omega} \BLambda \cdot \Bphi \, dS\\[3mm]
\displaystyle = \int_{\Omega} \BJ \cdot \tilde{\Bvarphi}\, dx + 
\int_{\Omega} J \cdot \Bphi \, dx + \int_{\partial \Omega} \BLambda \cdot \Bphi \, dS \\[3mm]
\displaystyle = \int_{\Omega} \BJ \cdot \Bvarphi \, dx + \int_{\partial \Omega} \BLambda \cdot \Bvarphi \, dS.
\end{array}
\end{equation}
Compare the two equalities \eqref{4-0-15} and \eqref{4-0-16}, we get that 
\begin{equation}\label{4-0-17}
{\rm curl}~\Bxi \times \Bn = \BLambda , \ \ \ \mbox{on}\ \partial \Omega.
\end{equation}
We conclude that $\Bxi$ is also a solution to the system \eqref{4-0-5}.
Moreover, let $\Bv = {\rm curl}~\Bxi$. 
\begin{equation}\nonumber
{\rm div}~\Bv = 0,\ \ \ \ {\rm curl}~\Bv = \BJ,\ \ \ \mbox{in}\ \Omega,\ \ \ \ \ \Bv\times \Bn = \BLambda,\ \ \ \mbox{on}\ \partial \Omega.
\end{equation}
It follows from the inequality \eqref{Sobolev2} and the estimate \eqref{4-0-11} that 
\begin{equation}\label{4-0-18}
\| \Bv\|_{W^{1, p}(\Omega)} \leq C \left( \| \Bv \|_{L^p(\Omega)} + \| \BJ \|_{L^p(\Omega)} + \|\BLambda\|_{W^{1-\frac1p, p}(\partial \Omega)}  \right) \leq C \left( \| \BJ \|_{L^p(\Omega)} + \|\BLambda\|_{W^{1-\frac1p, p}(\partial \Omega)} \right).
\end{equation}
Hence, applying the inequality \eqref{Sobolev1}, we have
\begin{equation}\label{4-0-19}
\|\Bxi \|_{W^{2, p}(\Omega) } \leq C \left( \|\Bxi\|_{L^p(\Omega)}  + \| \Bv\|_{W^{1, p}(\Omega)} \right) \leq 
C \left(\| \BJ \|_{L^p(\Omega)} + \|\BLambda\|_{W^{1-\frac1p, p }(\partial \Omega)}\right). 
\end{equation}

At last, let us discuss the uniqueness. If $\Bxi \in W^{2, p}(\Omega)$ is a solution to \eqref{4-0-5}, it is easy to check that 
$\Bxi$ is also a solution to the variational problem \eqref{4-0-9}. Hence the uniqueness of $\Bxi$ is indicated by the uniqueness of solutions to \eqref{4-0-9}.

\end{proof}

\begin{theorem}\label{theorem4-0-1}
Let $1 < p < +\infty$. 
Assume that $\BJ \in L^p(\Omega)$, $\BLambda \in W^{1- \frac{1}{p}, p}(\partial \Omega)$, and $(\BJ, \BLambda)$ satisfy the following 
compatibility conditions \eqref{compatibility-4-1}-\eqref{compatibility-4-2}. 
Then there exists a vector potential $\Bv \in W^{1, p}(\Omega)$ such that 
\begin{equation} 
\label{4-0-22}
\left\{
\begin{array}{l}
{\rm curl }~ \Bv = \BJ, \ \ \ \ \mbox{in}\ \Omega,\\[2mm]
{\rm div}~\Bv = 0,\ \ \ \ \mbox{in}\ \Omega,\\[2mm]
\Bv\times \Bn = \BLambda,\ \ \ \ \mbox{on}\ \partial \Omega.
\end{array}
\right.
\end{equation}
And it holds that 
\begin{equation}
\|\Bv\|_{W^{1, p}(\Omega)} \leq C \left(  \| \BJ \|_{L^p(\Omega)} + \|\BLambda \|_{W^{1 - \frac1p, p}(\partial \Omega) } \right). 
\end{equation}
\end{theorem}

\begin{proof}
Let $ \Bv= {\rm curl}~\Bxi$, where $\Bxi$ is the solution derived in Lemma \ref{lemma4-0-3}. $\Bv$ is the desired solution to \eqref{4-0-22}. 

\end{proof}

%%%%%%%%%%%%%%%%%%%%Solvability of div-curl%%%%%%%%%%%%%%%%%%%%%%%%%%%%%%%

In fact, we have a solvability result for the standard ${\rm div} - {\rm curl}$ system. 
\begin{theorem} \label{theorem4-0-2}
Let $1 < p < +\infty$. 
Assume that $\BJ \in L^p(\Omega)$, $\rho \in L^p(\Omega)$, $\BLambda \in W^{1- \frac{1}{p}, p }(\partial \Omega)$, and $(\BJ, \BLambda)$ satisfy the following 
compatibility conditions \eqref{compatibility-4-1}-\eqref{compatibility-4-2}. 
Then there exists one solution $\Bv \in W^{1, p}(\Omega)$ such that 
\begin{equation} \label{4-0-30}
\left\{
\begin{array}{l}
{\rm curl }~ \Bv = \BJ, \ \ \ \ \mbox{in}\ \Omega,\\[2mm]
{\rm div}~ \Bv = \rho,\ \ \ \ \mbox{in}\ \Omega,\\[2mm]
\Bv\times \Bn = \BLambda,\ \ \ \ \mbox{on}\ \partial \Omega,
\end{array}
\right.
\end{equation}
and 
\begin{equation}
\| \Bv \|_{W^{1, p}(\Omega)} \leq C \left(   \| \BJ \|_{L^p(\Omega)} + \|\rho \|_{L^p(\Omega)}+ \|\BLambda \|_{W^{1 - \frac1p, p}(\partial \Omega) }   \right). 
\end{equation}
Furthermore, if $ \BJ \in W^{m-1, p}(\Omega)$, $\rho \in W^{m-1, p}(\Omega)$, and $\BLambda \in W^{m - \frac{1}{p}, p}(\partial \Omega)$, $m \in \mathbb{N}^*$, then it holds that 
\begin{equation}\label{4-0-33}
\| \Bv \|_{W^{m, p}(\Omega) } \leq C \left(  \| \BJ \|_{W^{m -1, p}(\Omega)} + \|\rho \|_{W^{m -1, p}(\Omega) } 
+ \|\BLambda\|_{W^{m - \frac1p, p}(\partial \Omega) }   \right). 
\end{equation}

\end{theorem}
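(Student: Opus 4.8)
The plan is to reduce \eqref{4-0-30} to the divergence-free case, which is exactly Theorem \ref{theorem4-0-1}, by subtracting off a gradient that carries the prescribed divergence. First I would invoke Theorem \ref{theorem4-0-1}: since $(\BJ, \BLambda)$ satisfy the compatibility conditions \eqref{compatibility-4-1}--\eqref{compatibility-4-2}, there is $\Bv_1 \in W^{1, p}(\Omega)$ with ${\rm curl}~\Bv_1 = \BJ$, ${\rm div}~\Bv_1 = 0$ in $\Omega$ and $\Bv_1 \times \Bn = \BLambda$ on $\partial \Omega$, together with $\|\Bv_1\|_{W^{1, p}(\Omega)} \leq C(\|\BJ\|_{L^p(\Omega)} + \|\BLambda\|_{W^{1 - \frac1p, p}(\partial \Omega)})$. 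Next I would solve the Dirichlet problem $\Delta q = \rho$ in $\Omega$, $q = 0$ on $\partial \Omega$; by the classical $L^p$-theory for elliptic equations this has a unique solution $q \in W^{2, p}(\Omega)$ with $\|q\|_{W^{2, p}(\Omega)} \leq C\|\rho\|_{L^p(\Omega)}$. Setting $\Bv = \Bv_1 + \nabla q$, one checks directly that ${\rm curl}~\Bv = \BJ$ and ${\rm div}~\Bv = \rho$; moreover $\nabla q \times \Bn$ vanishes on $\partial \Omega$ because the tangential gradient $\nabla_T q$ of $q|_{\partial \Omega} \equiv 0$ does, hence $\Bv \times \Bn = \Bv_1 \times \Bn = \BLambda$. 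Adding the two estimates yields the asserted $W^{1, p}$-bound.

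For the higher-order statement, once we know $\Bv \in W^{1, p}(\Omega) \subset L^p(\Omega)$ with ${\rm curl}~\Bv = \BJ \in W^{m-1, p}(\Omega)$, ${\rm div}~\Bv = \rho \in W^{m-1, p}(\Omega)$ and $\Bv \times \Bn = \BLambda \in W^{m - \frac1p, p}(\partial \Omega)$, I would apply the Friedrichs regularity inequality \eqref{Sobolev2} (in the form proved in \cite{AS, KY}, i.e. a genuine $W^{m,p}$-regularity statement) to conclude $\Bv \in W^{m, p}(\Omega)$ with
\[
\|\Bv\|_{W^{m, p}(\Omega)} \leq C\Big( \|\Bv\|_{L^p(\Omega)} + \|\BJ\|_{W^{m-1, p}(\Omega)} + \|\rho\|_{W^{m-1, p}(\Omega)} + \|\BLambda\|_{W^{m - \frac1p, p}(\partial \Omega)}\Big).
\]
Since $\|\Bv\|_{L^p(\Omega)}$ is already controlled by the $W^{1, p}$-estimate, hence by $\|\BJ\|_{L^p(\Omega)} + \|\rho\|_{L^p(\Omega)} + \|\BLambda\|_{W^{1-\frac1p, p}(\partial \Omega)} \leq \|\BJ\|_{W^{m-1, p}(\Omega)} + \|\rho\|_{W^{m-1, p}(\Omega)} + \|\BLambda\|_{W^{m-\frac1p, p}(\partial \Omega)}$, the estimate \eqref{4-0-33} follows. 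Alternatively, the same conclusion can be reached from the explicit representation $\Bv = {\rm curl}~\Bxi + \nabla q$ by quoting the higher-order version of Lemma \ref{lemma4-0-3} for $\Bxi$ together with elliptic regularity for $q$.

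The only points requiring care are: (i) that the added gradient $\nabla q$ must not disturb the tangential boundary datum, which is why one imposes $q = 0$ (equivalently, $q$ locally constant) on $\partial \Omega$ rather than a Neumann condition; and (ii) that no compatibility condition on $\rho$ is needed, consistent with the statement, because only $\Bv \times \Bn$ and not $\Bv \cdot \Bn$ is prescribed, so the identity $\int_\Omega \rho \, dx = \int_{\partial \Omega} \Bv \cdot \Bn \, dS$ imposes no constraint on $\rho$. I do not anticipate a genuine obstacle: the substance of the result is entirely contained in Theorem \ref{theorem4-0-1} and the already-available inequality \eqref{Sobolev2}, and this theorem is essentially their bookkeeping corollary.
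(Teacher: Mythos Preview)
Your proposal is correct and matches the paper's proof essentially line for line: the paper also takes $\Bv = \Bv_0 + \nabla q$ with $\Bv_0$ from Theorem~\ref{theorem4-0-1} and $q$ the $W^{2,p}$ solution of the Dirichlet problem $\Delta q = \rho$, $q|_{\partial\Omega}=0$, then obtains \eqref{4-0-33} by invoking \eqref{Sobolev2}. Your remarks on why the Dirichlet (rather than Neumann) condition is needed and why $\rho$ requires no compatibility are exactly the right observations.
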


\begin{proof}
Let $q$ be the unique solution to the following Dirichlet problem 
\begin{equation}\label{4-0-31}
\left\{ \begin{array}{l}
\Delta q = \rho,\ \ \ \ \mbox{in}\ \Omega,\\[2mm]
q = 0,\ \ \ \ \mbox{on}\ \partial \Omega.
\end{array}
\right.
\end{equation}
It follows from the regularity theory for Laplace equation\cite{GT} that 
\begin{equation}\nonumber
\|\nabla q\|_{W^{1, p} (\Omega)} \leq C \|\rho \|_{L^p(\Omega)}.
\end{equation}

Suppose $\Bv_0$ is the solution to \eqref{4-0-22},  derived in Theorem \ref{theorem4-0-1}. Let $\Bv = \Bv_0 + \nabla q$. It is easy to check that 
$\Bv$ is a  solution to \eqref{4-0-30}, satisfying 
\begin{equation}\nonumber
\| \Bv\|_{W^{1, p}(\Omega)} \leq  C \left( \| \BJ \|_{L^p(\Omega)} + \|\rho \|_{L^p(\Omega)} + \|\BLambda\|_{W^{1-\frac1p, p}(\partial \Omega)}  \right). 
\end{equation}

The $W^{m p}$-estimate for $\Bv$ follows from the inequality \eqref{Sobolev2}.  
\end{proof}

\begin{remark} Alonso-Valli
\cite{AV} has considered the problem \eqref{4-0-30} in $W^{1, 2}$-space, which is a Hilbert space. Here we generalize their result to $W^{1, p}$-space. Note that $W^{1, p}(\Omega)$ is not a Hilbert space, the proof in \cite{AV} can not be applied to our case directly. Instead, our proof is inspired by \cite{AS, KY}, which revealed that the generalized Lax-Milgram theorem( 
Lemma \ref{lemma4-0-1}) is a powerful tool. 
\end{remark}

%%%%%%%%%%%%%%%%%%%%%%%%%%%%remark require amendent%%%%%%%%%%%%%%%%%%%%%%%%%

\begin{remark}\label{remark-compatibility-2}
The compatibility conditions \eqref{compatibility-4-1}-\eqref{compatibility-4-2} were proposed by Alonso-Valli\cite{AV}. In fact, \eqref{compatibility-4-2} can be replaced by another compatibility condition
\begin{equation} \label{compatibility-4-7}
< \BJ \cdot \Bn_j, \, 1>_{\Sigma_j}= \int_{\partial \Sigma_j} (\Bn \times \BLambda ) \cdot \Btau_j \,  dl,\ \ \ \ \  1 \leq j \leq N_2.
\end{equation}

\end{remark}

\begin{proof}
On one hand, assume  the conditions \eqref{compatibility-4-1} and \eqref{compatibility-4-7} hold, then
\begin{equation}\nonumber
\begin{array}{l}
\displaystyle \int_{\Omega} \BJ \cdot \nabla^0 q_{j, *}^T\, dx \\[3mm]
= \displaystyle \int_{\Omega^0} \BJ \cdot \nabla^0  q_{j, *}^T\, dx \\[3mm]
=- \displaystyle \int_{\Omega^0} ( \nabla^0\cdot \BJ) \cdot q_{j, *}^T\, dx 
+ \int_{\partial \Omega} ( \BJ \cdot \Bn) \cdot q_{j, *}^T\, dS + \sum_{k =1}^{N_2}
 \int_{\Sigma_k}( \BJ \cdot \Bn_k) [q_{j, *}^T]_k \, dS \\[3mm]
= \displaystyle  \int_{\partial \Omega} (\BJ \cdot \Bn) \cdot q_{j, *}^T\, dS   
+   \sum_{k =1}^{N_2}   < \BJ \cdot \Bn_k,\ 1>_{\Sigma_k} [ q_{j, *}^T]_k    \\[3mm]
= \displaystyle  \int_{\partial \Omega} {\rm div}_T~\BLambda \cdot q_{j, *}^T\, dS 
+   \sum_{k =1}^{N_2}   < \BJ \cdot \Bn_k,\ 1>_{\Sigma_k} [ q_{j, *}^T]_k   \\[3mm]
= \displaystyle - \int_{\partial \Omega} \BLambda \cdot \nabla^0_T q_{j, *}^T \, dS
+ \sum_{k =1}^{N_2} \int_{\partial \Sigma_k} \BLambda \cdot (  \Bn \times \Btau_k) \, dl \cdot [ q_{j, *}^T]_k
+   \sum_{k =1}^{N_2}   < \BJ \cdot \Bn_k,\ 1>_{\Sigma_k} [ q_{j, *}^T]_k
\\[3mm]
=  \displaystyle - \int_{\partial \Omega} \BLambda \cdot \nabla^0 q_{j, *}^T \, dS  
 - \sum_{k =1}^{N_2} \left[ \int_{\partial \Sigma_k}  ( \Bn \times \BLambda ) \cdot \Btau_k \, dl  - < \BJ \cdot \Bn_k,\ 1>_{\Sigma_k} \right] \cdot  [ q_{j, *}^T]_k\\[3mm]
=  \displaystyle - \int_{\partial \Omega} \BLambda \cdot \nabla^0 q_{j, *}^T \, dS  . 
\end{array}
\end{equation}
Since $\{ \nabla^0 q_{j, *}^T ; \ 1\leq j \leq N_2\}$ is a basis of $K_T(\Omega)$, it holds that 
\begin{equation}\nonumber
\int_{\Omega} \BJ \cdot \Bvarphi \, dx = - \int_{\partial \Omega} \BLambda \cdot \Bvarphi \, dS,\ \ \ \ \forall \ \Bvarphi \in K_T(\Omega).
\end{equation}

On the other hand, suppose the compatibility conditions \eqref{compatibility-4-1}- \eqref{compatibility-4-2} hold. For every 
fixed $1\leq j \leq N_2$, choose some function $r_{j, *}^T \in \Theta $ with 
\begin{equation}\nonumber
\nabla^0 r_{j, *}^T \in K_T(\Omega),\ \ \ \ \ [r_{j, *}^T]_k = \delta_{jk}. 
\end{equation} 
The functions $\{ r_{j, *}^T ; \, 1 \leq j \leq N_2 \}$ have been constructed in \cite{KY}( In fact, they also span one basis of $K_T(\Omega)$). 
\begin{equation} \begin{array}{l}
\displaystyle \int_{\Omega} \BJ \cdot \nabla^0 r_{j, *}^T \, dx \\[3mm]= 
\displaystyle
\int_{\partial \Omega} ( \BJ \cdot \Bn)  \cdot r_{j, *}^T \, dS + \sum_{k=1}^{N_2} \int_{\Sigma_k} ( \BJ \cdot \Bn_k) \cdot 
[r_{j, *}^T]_k  \, dS \\[3mm] 
\displaystyle = \int_{\partial \Omega} {\rm div}_T \BLambda \cdot r_{j, *}^T \, dS 
+ <\BJ \cdot \Bn_j, \, 1>_{\Sigma_j} \\[3mm] 
\displaystyle  = - \int_{\partial \Omega} \BLambda \cdot \nabla^0  r_{j, *}^T \, dS 
+ \int_{\partial \Sigma_j} \BLambda \cdot ( \Bn \times \Btau_j) \, dl      +
<\BJ \cdot \Bn_j, \, 1>_{\Sigma_j},
\end{array}
\end{equation}
which implies that 
\begin{equation}\nonumber
<\BJ \cdot \Bn_j, \, 1>_{\Sigma_j} = \int_{\partial \Sigma_j} ( \Bn \times \BLambda ) \cdot \Btau_j \, dl ,\ \ \ \ 1\leq j \leq N_2. 
\end{equation}

\end{proof}

%%%%%%%%%%%%%%%Proof of Theorem 2.3%%%%%%%%%%%%%%%%%%%%%%%%%%%%%%%%%%%%%%%%%%%%%%%%%%%%%%%%%%%%%5

\subsection{Proof of Theorem \ref{theorem3}}

\begin{proof}[\bf Proof of Theorem \ref{theorem3}] Suppose $\Bv$ is a solution derived in Theorem \ref{theorem4-0-1} to the system \eqref{4-0-22}.  Let us consider the following system
\begin{equation}\label{4-3-20}
\left\{  \begin{array}{l}
{\rm curl }~\Bw = 0, \ \ \ \mbox{in}\ \Omega, \\[2mm]
{\rm div}~(\Bepsilon \Bw) = \rho  - {\rm div}~(\Bepsilon \Bv),\ \ \ \mbox{in}\ \Omega,\\[2mm]
\Bw\times \Bn = 0,\ \  \ \mbox{on}\ \partial \Omega.
\end{array}
\right.
\end{equation}
Since ${\rm curl }~\Bw = 0$ in $\Omega$, and $\Bw\times \Bn = 0$ on $\partial \Omega$, $\Bw$ is in fact a gradient, i.e., $\Bw = \nabla q$ with $q= constant$ on every $\Gamma_i$, $0\leq i \leq N_1$.  Let us find one special solution.  Consider the following elliptic system
\begin{equation}\label{4-3-21}
\left\{ \begin{array}{l}
{\rm div}~(\Bepsilon \nabla q ) = \rho  - {\rm div}~(\Bepsilon \Bv), \ \ \ \mbox{in}\ \Omega, \\[2mm]
q = 0,\ \ \ \mbox{on}\ \partial \Omega.
\end{array}
\right.
\end{equation}
\eqref{4-3-21} is  a classical elliptic equation with Dirichlet boundary condition.
There exists one unique solution $q \in W^{m+1, p}(\Omega)$, with the estimate
\begin{equation}\label{4-3-22}
\begin{array}{ll}
\|\nabla q\|_{W^{m,p}(\Omega)} & \leq C \|\rho - {\rm div}~(\Bepsilon \Bv)\|_{W^{m -1, p}(\Omega)}\\[2mm]
& \leq C \|\rho\|_{W^{m-1, p}(\Omega)} + C \| \Bv\|_{W^{m, p}(\Omega)} \\[2mm]
& \leq C\left(  \| \BJ \|_{W^{m-1, p}(\Omega)} + \|\rho \|_{W^{m-1, p}(\Omega)} + \|\BLambda \|_{W^{m - \frac1p, p}(\partial \Omega)}                     \right),
\end{array}
\end{equation}
where the last inequality is due to Theorem \ref{theorem4-0-1}.

Let $\Bu_0 = \nabla q +  \Bv$, it is easy to check that $\Bu_0$ is a solution to \eqref{1-2}, and
\begin{equation}\label{4-3-23}
\|\Bu_0 \|_{W^{m, p}(\Omega)} \leq C \left(  \| \BJ \|_{W^{m-1, p}(\Omega)} + \|\rho \|_{W^{m-1, p}(\Omega)} + \|\BLambda \|_{W^{m - \frac1p, p}(\partial \Omega)}                     \right).
\end{equation}

That ends the proof of Theorem \ref{theorem3}.
\end{proof}

%%%%%%%%%%%%%%%%%%%%%%%%%%%%%Null space%%%%%%%%%%%%%%%%

\subsection{The null space $K_{N, \Bepsilon}^p(\Omega)$}

In this subsection, we will talk about the null space $K_{N, \Bepsilon}^p(\Omega)$. One particular basis will be given. The characterization of $K_{N, \Bepsilon}^p(\Omega)$ is also inspired by that of $K_N^p(\Omega)$ in \cite{ABDG, AS, KY}.

\begin{theorem}\label{theorem4.3} The dimension of the null space $K_{N, \Bepsilon}^2(\Omega)$ is equal to the Betti number $N_1$. It is spanned by the functions $\left\{\nabla q_i^N; \ 1\leq i \leq N_1 \right\}$, where each $q_i^N$ is the unique solution in $W^{1, 2}(\Omega)$ of the problem
\begin{equation}\label{4-2-1}
\left\{ \begin{array}{l}
- {\rm div}~(\Bepsilon \nabla q_i^N) = 0, \ \ \ \mbox{in}\ \Omega,\\[2mm]
q_i^N |_{\Gamma_0} = 0,\ \ \ \mbox{and}\ \ \ q_i^N|_{\Gamma_k} = constant,\  \ 1 \leq k \leq N_1, \\[2mm]
<\Bepsilon \nabla q_i^N \cdot \Bn,\ 1>_{\Gamma_k} = \delta_{ik}, \ \ 1\leq k \leq N_1, \ \ \ \ <\Bepsilon \nabla q_i^N \cdot \Bn, \ 1>_{\Gamma_0} = -1.
\end{array}
\right.
\end{equation}
\end{theorem}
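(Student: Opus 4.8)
The plan is to mirror the proof of Theorem \ref{lemma3-3}, but with the roles of the boundary components $\Gamma_i$ playing the part that the cuts $\Sigma_j$ played there. The argument splits naturally into two parts: first we establish existence and uniqueness of each $q_i^N$, then we verify that $\{\nabla q_i^N;\ 1\leq i\leq N_1\}$ is a basis of $K_{N,\Bepsilon}^2(\Omega)$.

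For the first part, I would introduce the closed subspace
\begin{equation}\nonumber
\Xi=\left\{r\in W^{1,2}(\Omega):\ r|_{\Gamma_0}=0,\ \ r|_{\Gamma_k}=\text{constant},\ 1\leq k\leq N_1\right\}
\end{equation}
and, for each fixed $i$, solve the variational problem: find $q_i^N\in\Xi$ such that $\int_\Omega\Bepsilon\nabla q_i^N\cdot\nabla\varphi\,dx=[\varphi]_i$ for all $\varphi\in\Xi$, where $[\varphi]_i$ denotes the (constant) value of $\varphi$ on $\Gamma_i$. The bilinear form is continuous and coercive on $\Xi$ by the uniform ellipticity of $\Bepsilon$ together with the Poincar\'e inequality (available because $r|_{\Gamma_0}=0$), so Lax--Milgram gives a unique solution. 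Testing against $\varphi\in C_0^\infty(\Omega)$ yields $-{\rm div}(\Bepsilon\nabla q_i^N)=0$ in $\Omega$; then testing against general $\varphi\in\Xi$ and integrating by parts, using that $\varphi$ is a constant $[\varphi]_k$ on each $\Gamma_k$ and vanishes on $\Gamma_0$, produces $\sum_{k=1}^{N_1}[\varphi]_k\,\langle\Bepsilon\nabla q_i^N\cdot\Bn,1\rangle_{\Gamma_k}=[\varphi]_i$ for all choices of the constants $[\varphi]_k$, hence $\langle\Bepsilon\nabla q_i^N\cdot\Bn,1\rangle_{\Gamma_k}=\delta_{ik}$; and the divergence theorem on $\Omega$ forces $\langle\Bepsilon\nabla q_i^N\cdot\Bn,1\rangle_{\Gamma_0}=-\sum_{k=1}^{N_1}\delta_{ik}=-1$. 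Thus the variational solution solves \eqref{4-2-1}, and conversely any solution of \eqref{4-2-1} solves the variational problem, giving uniqueness.

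For the second part, each $\nabla q_i^N$ satisfies ${\rm curl}(\nabla q_i^N)=0$, ${\rm div}(\Bepsilon\nabla q_i^N)=0$, and $\nabla q_i^N\times\Bn=0$ on $\partial\Omega$ (since $q_i^N$ is locally constant on $\partial\Omega$, its tangential gradient vanishes), so $\nabla q_i^N\in K_{N,\Bepsilon}^2(\Omega)$; the conditions $\langle\Bepsilon\nabla q_i^N\cdot\Bn,1\rangle_{\Gamma_k}=\delta_{ik}$ show these fields are linearly independent. To see they span, take any $\Bu\in K_{N,\Bepsilon}^2(\Omega)$ and set $\Bw=\Bu-\sum_{i=1}^{N_1}\langle\Bu\cdot\Bn,1\rangle_{\Gamma_i}\nabla q_i^N$. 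Since ${\rm curl}\,\Bw=0$ in $\Omega$ and $\Bw\times\Bn=0$ on $\partial\Omega$, and since one checks $\langle\Bw\cdot\Bn,1\rangle_{\Gamma_k}=0$ for all $k$ (using $\langle\Bepsilon\nabla q_i^N\cdot\Bn,1\rangle_{\Gamma_k}=\delta_{ik}$ — note the fluxes of $\Bepsilon\Bw$, not $\Bw$, are the natural quantities, so a small adjustment is needed here), $\Bw$ is a gradient $\nabla q$ with $q$ constant on each $\Gamma_k$ and, after normalizing, vanishing on $\Gamma_0$; then
\begin{equation}\nonumber
\int_\Omega\Bepsilon\Bw\cdot\Bw\,dx=\int_\Omega\Bepsilon\Bw\cdot\nabla q\,dx=-\int_\Omega{\rm div}(\Bepsilon\Bw)\,q\,dx+\sum_{k=0}^{N_1}\langle\Bepsilon\Bw\cdot\Bn,1\rangle_{\Gamma_k}\,[q]_k=0,
\end{equation}
so $\Bw\equiv0$ by ellipticity. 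Therefore $\Bu=\sum_{i=1}^{N_1}\langle\Bu\cdot\Bn,1\rangle_{\Gamma_i}\nabla q_i^N$ and the dimension is exactly $N_1$.

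\textbf{Main obstacle.} The delicate point is the bookkeeping of which fluxes are controlled: the defining conditions of $q_i^N$ involve $\Bepsilon\nabla q_i^N\cdot\Bn$, whereas the natural coefficients in the decomposition of an arbitrary $\Bu\in K_{N,\Bepsilon}^2(\Omega)$ want to be expressed via $\Bu\cdot\Bn$ (or $\Bepsilon\Bu\cdot\Bn$). One must choose the expansion coefficients so that the remainder $\Bw$ has vanishing $\Bepsilon$-weighted boundary fluxes on all $\Gamma_k$, which is what makes the boundary terms in the integration-by-parts identity collapse; getting this matching exactly right — and confirming that $\nabla q$ being locally constant on $\partial\Omega$ really is equivalent to $\Bw\times\Bn=0$ on a possibly multiply connected boundary — is where the care is needed. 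The existence/uniqueness of $q_i^N$ and the $C^\infty$-regularity (paralleling Theorem \ref{theorem3-3}) are then routine consequences of elliptic theory with the conormal/Dirichlet mixed conditions.
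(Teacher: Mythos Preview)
Your Part~I is essentially identical to the paper's: the same space (called $\Theta^0$ there), the same variational problem $\int_\Omega\Bepsilon\nabla q_i^N\cdot\nabla r\,dx=r|_{\Gamma_i}$, and the same verification via Lax--Milgram and specially chosen test functions.

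In Part~II your hesitation about the coefficients is well founded, and the fix is exactly what the paper does: set
\[
\Bw=\Bu-\sum_{i=1}^{N_1}\langle\Bepsilon\Bu\cdot\Bn,\,1\rangle_{\Gamma_i}\,\nabla q_i^N,
\]
not $\langle\Bu\cdot\Bn,1\rangle_{\Gamma_i}$. With this choice the cancellation $\langle\Bepsilon\Bw\cdot\Bn,1\rangle_{\Gamma_k}=0$ is immediate from $\langle\Bepsilon\nabla q_i^N\cdot\Bn,1\rangle_{\Gamma_k}=\delta_{ik}$, and your integration-by-parts identity goes through cleanly.

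Where your argument genuinely diverges from the paper is the mechanism for concluding $\Bw=0$. You exploit ${\rm curl}\,\Bw=0$ and $\Bw\times\Bn=0$ to write $\Bw=\nabla q$ with $q$ locally constant on $\partial\Omega$, then integrate $\Bepsilon\Bw\cdot\nabla q$ by parts and kill the boundary terms with the vanishing $\Bepsilon$-fluxes. The paper instead uses the \emph{dual} potential: from ${\rm div}(\Bepsilon\Bw)=0$ and $\langle\Bepsilon\Bw\cdot\Bn,1\rangle_{\Gamma_i}=0$ for all $i$, Lemma~\ref{lemma3-1} produces $\Bpsi$ with $\Bepsilon\Bw={\rm curl}\,\Bpsi$, and then
\[
\int_\Omega\Bepsilon\Bw\cdot\Bw\,dx=\int_\Omega{\rm curl}\,\Bpsi\cdot\Bw\,dx=\int_\Omega\Bpsi\cdot{\rm curl}\,\Bw\,dx+\int_{\partial\Omega}(\Bw\times\Bn)\cdot\Bpsi\,dS=0.
\]
Both routes are correct. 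The paper's is slightly more self-contained since Lemma~\ref{lemma3-1} is already in hand, whereas your route needs the fact that a curl-free field with vanishing tangential trace on a multiply connected $\Omega$ is globally a gradient --- true, and used elsewhere in the paper (proofs of Theorems~\ref{theorem3} and~\ref{Nullspace2}), but not entirely trivial on its own.
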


\begin{proof}We decompose the proof into two parts. The first part is devoted to the existence of $q_i^N$, while in the second part we verify that $\{ \nabla q_i^N; \ 1\leq i \leq N_1  \}$ is a 
basis of $K_{N, \Bepsilon}^2(\Omega)$. 

{\bf Part I}\  \ First, let us define one function space,
\begin{equation}\nonumber
\Theta^0 = \left\{  r \in W^{1, 2}(\Omega):\ r|_{\Gamma_0} = 0,\ \ r|_{\Gamma_i} = constant, \ 1\leq i \leq N_1             \right\}.
\end{equation}
For every $1\leq i \leq N_1$, let us consider the following variational problem: \ find $q_i^N$ in $\Theta^0$ such that for every $r \in \Theta^0$,
\begin{equation}\label{4-2-5}
\int_{\Omega} \Bepsilon \nabla q_i^N \cdot \nabla r \, dx = r|_{\Gamma_i}.
\end{equation}
Applying Lax-Milgram theorem, we deduce that \eqref{4-2-5} has a unique solution $q_i^N \in \Theta^0$.

For every function $r \in C_0^\infty(\Omega)$,
\begin{equation}\nonumber
\int_{\Omega} \Bepsilon \nabla q_i^N \cdot \nabla r \, dx  = 0,
\end{equation}
which implies that
\begin{equation}\label{4-2-7}
- {\rm div}~(\Bepsilon \nabla q_i^N ) = 0,\ \ \ \mbox{in}\ \Omega.
\end{equation}
For every fixed $i$, $1\leq i \leq N_1$, choose some function $r_i \in W^{1, 2}(\Omega)$, satisfying
\begin{equation}\nonumber
r_i = 1 \ \ \ \mbox{ on} \ \Gamma_i \ \ \ \ \mbox{and} \ \ \ \ \ \ r_i  = 0\ \ \ \mbox{ on}\ \Gamma_j,\ \ j \neq i.
\end{equation}
 Then
\begin{equation}\label{4-2-8}
1 = r_i |_{\Gamma_i} = \int_{\Omega} \Bepsilon \nabla q_i^N \cdot \nabla r_i \, dx = < \Bepsilon \nabla q_i^N\cdot \Bn, \ 1>_{\Gamma_i}.
\end{equation}
Moreover, for every $1\leq j \leq N_1$, $j \neq i$,
\begin{equation}\label{4-2-9}
0 = r_j |_{\Gamma_i} = \int_{\Omega} \Bepsilon \nabla q_i^N \cdot \nabla r_j \, dx = < \Bepsilon \nabla q_i^N \cdot \Bn , \ 1>_{\Gamma_j}.
\end{equation}
Similarly, choose some function $\tilde{r_i} \in W^{1, 2}(\Omega)$ such that
\begin{equation}\nonumber
\tilde{r_i} = 1, \ \ \ \mbox{on}\ \ \Gamma_i \ \ \mbox{and}\ \Gamma_0,\ \ \ \ \tilde{r_i} = 0\ \ \mbox{on}\ \Gamma_j, \ j\neq i, \ 1\leq j \leq N_1.
\end{equation}
We can easily deduce that
\begin{equation}\label{4-2-10}
<\Bepsilon \nabla q_i^N \cdot \Bn,\ 1>_{\Gamma_0} = -1.
\end{equation}
The above argument verifies that $q_i^N$ is in fact a solution to \eqref{4-2-1}. On the other hand, every solution of \eqref{4-2-1} solves \eqref{4-2-5}. Thus \eqref{4-2-1} admits one unique solution $q_i^N \in \Theta^0$.

{\bf Part II}\ \ The functions $\left\{\nabla q_i^N:  \ 1\leq i \leq N_1 \right\}$ are obviously independent. It remains to prove that they span $K_{N, \Bepsilon}^2(\Omega)$. Take any
function $\Bu \in K_{N, \Bepsilon }^2(\Omega)$ and consider the function
\begin{equation}\nonumber
\Bw = \Bu - \sum_{i =1}^{N_1} \left<  \Bepsilon u \cdot \Bn, \ 1 \right>_{\Gamma_i} \nabla q_i^N.
\end{equation}
It is easy to check that
\begin{equation}\label{4-2-12}
{\rm div}~(\Bepsilon \Bw) = 0,\ \ \ \mbox{in}\ \Omega,\ \ \ \mbox{and}\ \ \ \left< \Bepsilon \Bw \cdot \Bn, \ 1 \right>_{\Gamma_i} = 0,\ \ 0\leq i \leq N_1.
\end{equation}
According to Lemma \ref{lemma3-1}, there exists a vector potential $\Bpsi \in W^{1, 2}(\Omega)$, such that
\begin{equation}\nonumber
\Bepsilon \Bw = {\rm curl }~\Bpsi,\ \ \ \mbox{in}\ \Omega.
\end{equation}
And hence,
\begin{equation}\nonumber
\int_{\Omega} \Bepsilon \Bw \cdot \Bw \, dx = \int_{\Omega} {\rm curl }~\Bpsi \cdot \Bw \, dx =  \int_{\Omega} \Bpsi \cdot {\rm curl}~\Bw\, dx + \int_{\partial \Omega}
(\Bw\times \Bn) \cdot \Bpsi \, dS = 0,
\end{equation}
which implies $\Bw\equiv 0$ in $\Omega$. That ends the proof of Theorem \ref{theorem4.3}.

\end{proof}

%%%%%%%%%%%%%%%%%%%%%%%%%%%%%%%%%%%%%%%%Indentity between K_N^p and K_N^2%%%%%%%%%%%%%%%%%%%%%%%%%%%%%%%%%55
Next, we will prove the identity between $K_{N, \Bepsilon}^p(\Omega) $ and $K_{N, \Bepsilon}(\Omega)$. 
\begin{theorem}\label{Nullspace2}
For every $1<p< \infty$, $K_{N, \Bepsilon}^p(\Omega) = K_{N, \Bepsilon}(\Omega)$. 
\end{theorem}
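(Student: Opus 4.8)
The plan is to mimic the argument already used for Theorem \ref{theorem3-3}, replacing the role of the conormal-derivative (Neumann-type) elliptic estimate by the Dirichlet elliptic estimate, since the space $K_{N,\Bepsilon}$ is governed by a Dirichlet-type boundary condition on the potential. The inclusion $K_{N,\Bepsilon}(\Omega)\subseteq K_{N,\Bepsilon}^p(\Omega)$ is trivial, so it suffices to show that any $\Bu\in K_{N,\Bepsilon}^p(\Omega)$ is actually $C^\infty(\overline\Omega)$. First I would invoke the known fact $K_N^p(\Omega)=K_N^2(\Omega)=K_N(\Omega)$ (from \cite{KY}; cf.\ the analogous statement quoted in the proof of Theorem \ref{theorem3-3}), together with the $W^{1,p}$ Helmholtz-type decomposition of $L^p(\Omega)$ adapted to the tangential boundary condition: any field $\Bv\in L^p(\Omega)$ with $\Bv\times\Bn=0$ on $\partial\Omega$ can be written as $\Bv=\Bh+\curl\Bw+\nabla q$ with $\Bh\in K_N^p(\Omega)=K_N(\Omega)$, $q\in W^{1,p}(\Omega)$ vanishing on $\partial\Omega$, and $\Bw\in W^{1,p}(\Omega)$ with $\div\Bw=0$, $\Bw\cdot\Bn=0$; the component $\Bh$ is unique and $q$ is unique. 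Apply this to $\Bv=\Bepsilon\Bu$ — note $\Bepsilon$ is symmetric, and $\Bu\times\Bn=0$ does NOT force $(\Bepsilon\Bu)\times\Bn=0$, so here I would instead decompose $\Bu$ itself, writing $\Bu=\Bh+\curl\Bw+\nabla q$ in the $K_N^p$-adapted decomposition, with $q=0$ on $\partial\Omega$.

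Next I would use the hypotheses $\curl\Bu=0$ and $\div(\Bepsilon\Bu)=0$ to collapse the decomposition. Since $\curl\Bu=0$, the scalar potential $q$ satisfies $\Delta q=\div\Bu$ with $q=0$ on $\partial\Omega$, hence $\Bu-\nabla q$ is divergence-free, curl-free, and (being $\Bu-\nabla q$ with $\Bu\times\Bn=0$, $\nabla q\times\Bn=\nabla_T q=0$) has vanishing tangential trace, so $\Bu-\nabla q\in K_N^p(\Omega)=K_N(\Omega)\subset C^\infty(\overline\Omega)$; call it $\Bh$. (This shows the $\curl\Bw$ term drops by uniqueness, exactly as in the proof of Theorem \ref{theorem3-3}.) It remains to upgrade the regularity of $q$. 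Using $\div(\Bepsilon\Bu)=0$ and $\Bu=\Bh+\nabla q$, $q$ solves the Dirichlet problem
\begin{equation}\nonumber
\left\{
\begin{array}{l}
\div(\Bepsilon\nabla q)=-\div(\Bepsilon\Bh),\ \ \ \mbox{in}\ \Omega,\\[2mm]
q=0,\ \ \ \mbox{on}\ \partial\Omega.
\end{array}
\right.
\end{equation}
Since $\Bh\in C^\infty(\overline\Omega)$ and $\Bepsilon$ has smooth entries satisfying the uniform ellipticity condition, the right-hand side is smooth, and classical elliptic regularity for the Dirichlet problem (e.g.\ \cite{GT}, bootstrapping $W^{1,p}\to W^{2,p}\to\cdots$) gives $q\in C^\infty(\overline\Omega)$. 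Hence $\Bu=\Bh+\nabla q\in C^\infty(\overline\Omega)$, which proves $K_{N,\Bepsilon}^p(\Omega)\subseteq K_{N,\Bepsilon}(\Omega)$ and therefore equality.

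The main obstacle, as in Theorem \ref{theorem3-3}, is confirming that the $L^p$ orthogonal/Helmholtz-type decomposition adapted to the $\Bv\cdot\Bn=0$-vs-$\Bv\times\Bn=0$ boundary conditions is available in the non-Hilbert $L^p$ setting with the stated uniqueness properties; this is precisely the content of the cited results of \cite{KY} (Theorem 2.1 there, in its $K_N$-version) and $K_N^p=K_N$, so I would state it carefully and cite it rather than reprove it. A minor technical point worth a sentence: one must check that the tangential trace of $\nabla q$ vanishes given $q|_{\partial\Omega}=0$, i.e.\ $\nabla q\times\Bn=\nabla_T q=0$ on $\partial\Omega$, which is immediate, and that the decomposition is compatible with $\Bu\times\Bn=0$ so that $\Bh$ indeed lies in $K_N^p(\Omega)$. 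Finally, for completeness I would add the remark (parallel to the one after Theorem \ref{theorem3-3}) that if $\Omega$ is only of class $C^2$ the same argument yields $K_{N,\Bepsilon}^p(\Omega)=K_{N,\Bepsilon}^2(\Omega)$ for all $1<p<\infty$.
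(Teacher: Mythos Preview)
Your argument is correct, but it is more elaborate than necessary, and the paper's own proof is substantially shorter. You mimic Theorem~\ref{theorem3-3} by invoking the $K_N$-version of the $L^p$ Helmholtz decomposition from \cite{KY}, together with the identity $K_N^p(\Omega)=K_N(\Omega)$, in order to write $\Bu=\Bh+\nabla q$ with $\Bh\in K_N(\Omega)$ smooth, and then bootstrap $q$ via the Dirichlet problem $\div(\Bepsilon\nabla q)=-\div(\Bepsilon\Bh)$, $q|_{\partial\Omega}=0$. The paper bypasses all of this by observing that the two conditions $\curl\Bu=0$ in $\Omega$ and $\Bu\times\Bn=0$ on $\partial\Omega$ already force $\Bu$ to be a \emph{global} gradient $\Bu=\nabla q$ on $\Omega$ (even when $\Omega$ is multiply connected), with $q$ constant on each boundary component $\Gamma_i$. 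One then has directly $\div(\Bepsilon\nabla q)=0$ with constant Dirichlet data, and classical elliptic regularity for the Dirichlet problem gives $q\in C^\infty(\overline\Omega)$. No Helmholtz decomposition, no appeal to $K_N^p=K_N$, and no auxiliary harmonic field $\Bh$ are needed.

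The asymmetry with Theorem~\ref{theorem3-3} is worth noting: there, $\Bv=\Bsigma\Bu$ is curl-free but satisfies no useful boundary condition by itself (only $\Bsigma^{-1}\Bv\cdot\Bn=0$), so one genuinely needs the decomposition and the smoothness of $K_T$ to proceed. In the present $K_{N,\Bepsilon}$ case, the boundary condition $\Bu\times\Bn=0$ sits on the curl-free field $\Bu$ itself, which is exactly what makes the shortcut available. Your approach still works and has the virtue of being a uniform template for both cases, but it obscures this simplification.
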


\begin{proof}
For every $1< p < \infty$, $K_{N, \Bepsilon} (\Omega) \subseteq K_{N,  \Bepsilon}^p(\Omega)$. Hence,  it remains to prove that for every function $\Bu \in K_{N, \Bepsilon}^p(\Omega)$, 
 $u \in C^\infty(\overline{\Omega})$. 
Since 
\begin{equation}\nonumber
{\rm curl}~\Bu=0,\ \ \ \mbox{in}\ \Omega,\ \ \ \ \mbox{and}\ \ \ \ \Bu\times \Bn = 0\ \ \ \mbox{on}\ \ \partial \Omega,
\end{equation}
there exists one function $q \in W^{1, p}(\Omega)$, such that
\begin{equation}\nonumber
\Bu = \nabla q,\ \ \  \ \ \mbox{in}\ \ \Omega.
\end{equation}
Moreover, $q =constant$ on $\Gamma_i$, \ $0\leq i \leq N_1$.

On the other hand, 
\begin{equation}\nonumber
{\rm div}~(\Bepsilon \nabla q ) = {\rm div}~(\Bepsilon \Bu ) = 0,\ \ \ \ \mbox{in}\ \ \Omega.
\end{equation}
It follows from the classical regularity theory for elliptic equation with Dirichlet boundary condition\cite{GT} that 
$q \in C^\infty(\overline{\Omega})$. That ends the proof of Theorem \ref{Nullspace2}.

\end{proof}

\begin{remark}
When the domain $\Omega$ is of class $C^{2}$, following the same line as above, we can also prove that $K_{N, \Bepsilon}^p(\Omega) = K_{N, \Bepsilon}^2(\Omega).$
\end{remark}

%%%%%%%%%%%%%%%%%%%%%%%%%%%%%%%Proof of Theorem 4%%%%%%%%%%%%%%%%%%%%%%%%%%%%%%%

\subsection{Proof of Theorem \ref{theorem4}}  

In this subsection, we will prove Friedrichs inequality involving tangential boundary value. As before, our proof is based on the solvability of \eqref{1-2} and the characterization of the null space $K_{N, \Bepsilon}^p(\Omega)$. 
Let
\begin{equation}\nonumber
\BJ = {\rm curl }~\Bu, \ \ \ \rho = {\rm div}~(\Bepsilon \Bu), \ \ \ \BLambda = \Bu \times \Bn.
\end{equation}
According to Theorem \ref{theorem3}, there exists a function $\Bu_0 \in W^{m, p}(\Omega)$, such that
\begin{equation} \nonumber
\left\{ \begin{array}{l}
{\rm curl}~\Bu_0 = J, \ \ \ \mbox{in}\ \Omega, \\[2mm]
{\rm div}~(\Bepsilon \Bu_0) = \rho, \ \ \ \mbox{in}\ \Omega, \\[2mm]
\Bu_0 \times \Bn = \BLambda, \ \ \ \mbox{on}\ \partial \Omega,
\end{array}
\right.
\end{equation}
with the estimate
\begin{equation}\label{4-3-0}
\|\Bu_0 \|_{W^{m, p}(\Omega)} \leq C \left( \|\BJ\|_{W^{m-1, p}(\Omega)} + \|\rho\|_{W^{m-1, p}(\Omega)} + \|\BLambda\|_{W^{m - \frac1p, p}(\partial \Omega)}        \right).
\end{equation}

Let $\Bv = \Bu  - \Bu_0 $. Then $\Bv \in K_{N, \Bepsilon}^p(\Omega) = K_{N, \Bepsilon}(\Omega)$. As revealed in the proof of Theorem \ref{theorem4.3},
\begin{equation}\label{4-3-1}
\Bv = \sum_{i =1}^{N_1}  <  \Bepsilon \Bv \cdot \Bn, \ 1>_{\Gamma_i} \nabla q_i^N,
\end{equation}
where $\left\{ \nabla q_i^N; \ 1\leq i \leq N_1 \right \}$ is the basis for $K_{N, \Bepsilon}^2(\Omega)$, derived in Subsection 4.3. Due to Theorem \ref{Nullspace2}, $\nabla q_i^N \in C^\infty(\overline{\Omega})$.  Denote $\tilde{C_i}= \|\nabla q_i^N \|_{W^{m, p}(\Omega)}< + \infty$,

\begin{equation}\label{4-3-2} \begin{array}{ll}
\|\Bv\|_{W^{m, p}(\Omega)} &  \displaystyle \leq \sum_{i =1}^{N_1} \left\|\Bepsilon \Bv \cdot \Bn  \right\|_{W^{-\frac1p, p}(\Gamma_i)} \cdot \tilde{C_i}\\[3mm]
& \leq C\left \|\Bepsilon \Bv \right\|_{L^p(\Omega)} + C \left\| {\rm div}~(\Bepsilon \Bv) \right\|_{L^p(\Omega)} \\[3mm]
&=  C \left\|\Bepsilon \Bv \right\|_{L^p(\Omega)} \\[3mm]
&\leq  C \left( \|\Bu\|_{L^p(\Omega)}   + \|\Bu_0 \|_{L^p(\Omega) } \right).
\end{array}
\end{equation}
Consequently, $\Bu \in W^{m, p}(\Omega)$ and combining the estimates \eqref{4-3-1}-\eqref{4-3-2}, we have
\begin{equation}\nonumber
\|\Bu\|_{W^{m, p}(\Omega)} \leq C \left(\|\Bu\|_{L^p(\Omega)} + \|{\rm curl}~\Bu \|_{W^{m-1, p}(\Omega)} + \|{\rm div}~(\Bepsilon \Bu)\|_{W^{m -1, p}(\Omega)}
+ \|\Bu \times \Bn \|_{W^{m - \frac1p, p}(\partial \Omega)}                  \right).
\end{equation}
That ends the proof of Theorem \ref{theorem4}.

\begin{remark}
According to the proof of Theorem \ref{theorem4}, we can get another type of estimate for $\Bu$,
\begin{equation} \begin{array}{ll}
\|\Bu\|_{W^{m, p}(\Omega)} & \leq \|\Bu_0 \|_{W^{m, p}(\Omega)} + \|\Bv\|_{W^{m, p}(\Omega)} \\[2mm]
& \leq C \left(   \|{\rm curl}~\Bu \|_{W^{m-1, p}(\Omega)} + \|{\rm div}~(\Bepsilon \Bu)\|_{W^{m -1, p}(\Omega)}
+ \|\Bu \times \Bn \|_{W^{m - \frac1p, p}(\partial \Omega)}      \right)  \\[3mm]
&\ \ \ \ \ \  \ \ \ \ \ \ \ \displaystyle + C\sum_{i =1}^{N_1} \left| < \Bepsilon \Bv \cdot \Bn, 1>_{\Gamma_i } \right|\\[3mm]
& \leq C \left(   \|{\rm curl}~\Bu \|_{W^{m-1, p}(\Omega)} + \|{\rm div}~(\Bepsilon \Bu)\|_{W^{m -1, p}(\Omega)}
+ \|\Bu \times \Bn \|_{W^{m - \frac1p, p}(\partial \Omega)}      \right)  \\[3mm]
&\ \ \ \ \ \ \ \ \ \ \ \ \displaystyle + C \sum_{i =1}^{N_1} \left| < \Bepsilon \Bu \cdot \Bn, 1>_{\Gamma_i } \right|
\end{array}
\end{equation}
\end{remark}

\begin{remark}
Theorem \ref{theorem4} holds for the domain which is of class $C^{m+1}$. 
\end{remark}

%%%%%%%%%%%%%%%%%%%%%%%%%%%%%%%%%%%%%%Section 5%%%%%%%%%%%%%%%%%%%%%%%%%%%%%%%%%%%%%%%%%%%%%%%%%%%%%%%%

\section{Generalized Helmholtz-Weyl Decompositions}

In this section, we give two decompositions of vector fields $\Bu \in L^2(\Omega)$. The decompositions are designed for solvability of Maxwell equations. They have been 
discussed in many papers\cite{Picard, Saranen82, Saranen83}. We will give a new description. Moreover, we will talk about the decompositions in $W^{m, p}$-spaces. 

Define the function spaces $W^{1, 2}_{{\rm div}^0}(\Omega)$ and $W^{1, 2}_N(\Omega)$,
\begin{equation}\nonumber
W^{1, 2}_{{\rm div}^0} (\Omega) = \left\{  \Bv \in W^{1, 2}(\Omega):\ {\rm div}~\Bv = 0\ \ \ \mbox{in}\ \Omega \right\}.
\end{equation}
\begin{equation}\nonumber
W^{1, 2}_N(\Omega) = \left\{ \Bv \in W^{1, 2}(\Omega):\ \Bv\times \Bn = 0 \ \ \ \mbox{on}\ \partial \Omega, \ \ \mbox{and}\ \ <\Bv \cdot \Bn, 1>_{\Gamma_i} = 0,\ \ 0\leq i \leq N_1 \right\}.
\end{equation}

\begin{theorem}\label{decomposition1}

Let $\Bu \in L^2(\Omega)$. Then there exists $\Bh \in K_{T, \Bsigma}^2(\Omega)$, $\chi \in W^{1, 2}(\Omega)$, $\Bw \in W_{{\rm div}^0}^{1, 2}(\Omega) \cap W^{1, 2}_N(\Omega)$,  such that $u$ can be represented as 
\begin{equation}\nonumber
u = \Bh  + \Bsigma^{-1} \nabla \chi + {\rm curl}~\Bw , 
\end{equation}
where $\Bh$ is unique, $\chi$ is unique up to an additive constant and $\Bw$ is unique. Moreover, we have the
estimate:
\begin{equation}\nonumber
\|\Bh\|_{L^2(\Omega)} + \| \nabla \chi\|_{L^2(\Omega)} + \|\Bw\|_{W^{1,2}(\Omega)} \leq C \| \Bu\|_{L^2(\Omega)}.
\end{equation} 

\end{theorem}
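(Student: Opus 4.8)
The plan is to peel off the three pieces one at a time by solving two weighted variational problems and then recognising the remainder as a $\Bsigma$-harmonic field of magnetic type. First, because $\Bsigma^{-1}$ is bounded, symmetric and uniformly elliptic, the Lax--Milgram theorem provides a unique (modulo constants) $\chi\in W^{1,2}(\Omega)$ with
\[
\int_\Omega\Bsigma^{-1}\nabla\chi\cdot\nabla\varphi\,dx=\int_\Omega\Bu\cdot\nabla\varphi\,dx\qquad\text{for all }\varphi\in W^{1,2}(\Omega),
\]
and $\|\nabla\chi\|_{L^2(\Omega)}\le C\|\Bu\|_{L^2(\Omega)}$; testing with $\varphi\in C_0^\infty(\Omega)$ gives ${\rm div}\,(\Bu-\Bsigma^{-1}\nabla\chi)=0$ in $\Omega$, and with general $\varphi$ gives $(\Bu-\Bsigma^{-1}\nabla\chi)\cdot\Bn=0$ on $\partial\Omega$. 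Secondly, on the closed subspace $X:=W^{1,2}_{{\rm div}^0}(\Omega)\cap W^{1,2}_N(\Omega)$ of $W^{1,2}(\Omega)$ I would again use Lax--Milgram to find $\Bw\in X$ with
\[
\int_\Omega\Bsigma\,{\rm curl}\,\Bw\cdot{\rm curl}\,\Bphi\,dx=\int_\Omega\Bsigma\Bu\cdot{\rm curl}\,\Bphi\,dx\qquad\text{for all }\Bphi\in X ,
\]
and $\|\Bw\|_{W^{1,2}(\Omega)}\le C\|\Bu\|_{L^2(\Omega)}$ (coercivity of this form on $X$ is the crux, addressed below). It is essential to use the \emph{weighted} curl--curl form here: only then will the remainder be $\Bsigma$-harmonic rather than ordinary-harmonic.

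Now set $\Bh:=\Bu-\Bsigma^{-1}\nabla\chi-{\rm curl}\,\Bw$. From the first step ${\rm div}\,\Bh=0$ in $\Omega$, and since ${\rm curl}\,\Bw\cdot\Bn={\rm div}_T(\Bw\times\Bn)=0$ also $\Bh\cdot\Bn=0$ on $\partial\Omega$. For the last condition ${\rm curl}\,(\Bsigma\Bh)=0$: for every $\Bphi\in X$ one has $\int_\Omega\nabla\chi\cdot{\rm curl}\,\Bphi\,dx=<{\rm curl}\,\Bphi\cdot\Bn,\chi>_{\partial\Omega}=0$ because ${\rm curl}\,\Bphi\cdot\Bn={\rm div}_T(\Bphi\times\Bn)=0$, so the second variational identity yields $\int_\Omega\Bsigma\Bh\cdot{\rm curl}\,\Bphi\,dx=0$ for all $\Bphi\in X$. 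To upgrade this to ${\rm curl}\,(\Bsigma\Bh)=0$ in $\mathcal{D}'(\Omega)$, given $\Bphi\in C_0^\infty(\Omega)$ I would subtract a gradient $\nabla\theta$ (with $\Delta\theta={\rm div}\,\Bphi$ in $\Omega$, $\theta|_{\partial\Omega}=0$) together with a suitable combination of the basis $\{\nabla q_i^N\}_{1\le i\le N_1}$ of $K_N^2(\Omega)$ from Theorem~\ref{theorem4.3} (applied with $\Bepsilon=Id$, using that each $q_i^N$ is constant on every $\Gamma_k$) so as to produce $\Bphi_1\in X$ with ${\rm curl}\,\Bphi_1={\rm curl}\,\Bphi$; then $<{\rm curl}\,(\Bsigma\Bh),\Bphi>=\int_\Omega\Bsigma\Bh\cdot{\rm curl}\,\Bphi_1\,dx=0$. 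Hence $\Bh\in K_{T,\Bsigma}^2(\Omega)$, and collecting the three bounds gives $\|\Bh\|_{L^2(\Omega)}+\|\nabla\chi\|_{L^2(\Omega)}+\|\Bw\|_{W^{1,2}(\Omega)}\le C\|\Bu\|_{L^2(\Omega)}$. For uniqueness, the difference of two such decompositions reads $0=\widetilde{\Bh}+\Bsigma^{-1}\nabla\widetilde{\chi}+{\rm curl}\,\widetilde{\Bw}$ with $\widetilde{\Bh}\in K_{T,\Bsigma}^2(\Omega)$ and $\widetilde{\Bw}\in X$; taking divergence and normal trace forces $\widetilde{\chi}$ to solve the homogeneous weighted Neumann problem, so $\nabla\widetilde{\chi}\equiv0$, and then pairing $\widetilde{\Bh}=-{\rm curl}\,\widetilde{\Bw}$ with $\Bsigma\widetilde{\Bh}$ and integrating by parts (the boundary term vanishes since $\widetilde{\Bw}\times\Bn=0$, the interior term since ${\rm curl}\,(\Bsigma\widetilde{\Bh})=0$) gives $\int_\Omega\Bsigma\widetilde{\Bh}\cdot\widetilde{\Bh}\,dx=0$, hence $\widetilde{\Bh}=0$, and finally ${\rm curl}\,\widetilde{\Bw}=0$ with $\widetilde{\Bw}\in X$ gives $\widetilde{\Bw}=0$.

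The hard part is the coercivity used in the second variational problem, namely the generalized Poincar\'e inequality $\|\Bw\|_{L^2(\Omega)}\le C\|{\rm curl}\,\Bw\|_{L^2(\Omega)}$ for $\Bw\in X$. I would derive it by combining the Friedrichs inequality \eqref{Sobolev2} with $m=1$ --- which, using ${\rm div}\,\Bw=0$ and $\Bw\times\Bn=0$, already yields $\|\Bw\|_{W^{1,2}(\Omega)}\le C(\|\Bw\|_{L^2(\Omega)}+\|{\rm curl}\,\Bw\|_{L^2(\Omega)})$ --- with a Rellich compactness argument, whose key input is that the kernel $\{\Bw\in X:{\rm curl}\,\Bw=0\}$ is trivial: such a $\Bw$ lies in $K_N^2(\Omega)={\rm span}\{\nabla q_i^N:1\le i\le N_1\}$, and the flux conditions $<\Bw\cdot\Bn,1>_{\Gamma_i}=0$ built into $W^{1,2}_N(\Omega)$ kill all the coefficients because $<\nabla q_i^N\cdot\Bn,1>_{\Gamma_k}=\delta_{ik}$. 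Once this inequality and the characterization of $K_N^2(\Omega)$ are in hand, everything else --- the two applications of Lax--Milgram, the integration-by-parts identities, and assembling the estimates --- is routine.
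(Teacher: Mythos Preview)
Your argument is correct, but it proceeds along a genuinely different route from the paper's. After the identical first step producing $\chi$, the paper extracts the harmonic piece $\Bh$ \emph{first}, explicitly as $\Bh=\sum_{j=1}^{N_2}\langle(\Bu-\Bsigma^{-1}\nabla\chi)\cdot\Bn_j,1\rangle_{\Sigma_j}\,\Bsigma^{-1}\nabla^0 q_j^T$ using the basis of $K_{T,\Bsigma}^2(\Omega)$ from Theorem~\ref{lemma3-3}; the remainder $\Bz=\Bu-\Bh-\Bsigma^{-1}\nabla\chi$ then has vanishing periods $\langle\Bz\cdot\Bn_j,1\rangle_{\Sigma_j}=0$, so Lemma~\ref{asky} furnishes the unique $\Bw\in X$ with $\Bz={\rm curl}\,\Bw$ directly, without any weighted curl--curl variational problem or compactness argument. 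Your route reverses the order ($\chi\to\Bw\to\Bh$) and trades the explicit basis of $K_{T,\Bsigma}^2(\Omega)$ for a Lax--Milgram step on $X$ whose coercivity you obtain via Friedrichs plus Rellich, with the basis of $K_N^2(\Omega)$ doing double duty (kernel triviality and test-function correction). The paper's approach is shorter and entirely constructive once Theorem~\ref{lemma3-3} and Lemma~\ref{asky} are available; your approach is more self-contained variationally and never needs the $\Bsigma$-weighted cut functions $q_j^T$, at the price of the compactness detour and the somewhat delicate correction $\Bphi\mapsto\Bphi_1$ to pass from $C_0^\infty(\Omega)$ test functions to $X$.
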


Before the proof of Theorem \ref{decomposition1}, let us introduce one lemma, which is a particular case of Theorem \ref{theorem4-0-1}. The proof can also be found in \cite{AS}.

\begin{lemma}\label{asky} Let $1< p < + \infty$. 
A function $\Bz \in L^p({\rm div}; \Omega)$ satisfies
\begin{equation}\label{compatible-condition}
{\rm div}~\Bz = 0\ \ \mbox{in}\ \Omega, \ \ \ \Bz\cdot \Bn = 0\ \ \mbox{on} \ \partial \Omega,\ \ \ \mbox{and}\ \ <\Bz\cdot \Bn_j, 1>_{\Sigma_j} = 0, \ 1\leq j\leq N_2,
\end{equation}
if and only if there exists a vector potential $\Bpsi \in W^{1, p}(\Omega)$ such that
\begin{equation}\nonumber\begin{array}{l}
\Bz = {\rm curl}~\Bpsi , \ \ \ \ \mbox{and}\ \ \ {\rm div}~\Bpsi =0,\ \ \mbox{in}\ \Omega, \\[2mm]
\Bpsi \times \Bn = 0\ \ \ \ \mbox{on}\ \partial \Omega,\\[2mm]
<\Bpsi \cdot \Bn , 1>_{\Gamma_i}= 0,\ \ \ \mbox{for every} \ \ 0\leq i \leq N_1.
\end{array}
\end{equation}
This function $\Bpsi$ is unique and we have the estimate
\begin{equation}\nonumber
\|\Bpsi \|_{W^{1, p}(\Omega)} \leq C \|\Bz\|_{L^p(\Omega)}.
\end{equation}
\end{lemma}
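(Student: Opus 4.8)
The statement is an equivalence, so the plan is to prove the two implications separately and then settle uniqueness and the estimate. The substantial direction is the converse—constructing $\Bpsi$ from $\Bz$—and here the idea is to recognize it as the special case $\BLambda = 0$ of Theorem \ref{theorem4-0-1}. Setting $\BJ = \Bz$ and $\BLambda = 0$ will produce a field $\Bpsi$ with $\curl \Bpsi = \Bz$, $\div \Bpsi = 0$ in $\Omega$ and $\Bpsi \times \Bn = 0$ on $\partial\Omega$, together with the bound $\|\Bpsi\|_{W^{1,p}(\Omega)} \le C\|\Bz\|_{L^p(\Omega)}$. The work is to check that the hypotheses \eqref{compatible-condition} are exactly the compatibility conditions \eqref{compatibility-4-1}--\eqref{compatibility-4-2} demanded by that theorem, and then to recover the two properties not already listed in its conclusion: the normal-flux conditions on the $\Gamma_i$ and the uniqueness of $\Bpsi$.

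For the forward implication I would assume $\Bpsi$ has the stated form and set $\Bz = \curl\Bpsi$. Then $\div\Bz = \div \curl\Bpsi = 0$ automatically. On $\partial\Omega$ I would invoke the surface identity $\curl\Bpsi\cdot\Bn = {\rm div}_T(\Bpsi\times\Bn)$ (cited earlier via \cite{Monk}); since $\Bpsi\times\Bn = 0$ this gives $\Bz\cdot\Bn = 0$. For the cut-surface fluxes, Stokes' theorem yields $\langle\Bz\cdot\Bn_j,\,1\rangle_{\Sigma_j} = \oint_{\partial\Sigma_j}\Bpsi\cdot \Btau\, dl$; as $\partial\Sigma_j\subset\partial\Omega$ and $\Bpsi\times\Bn = 0$ there forces $\Bpsi$ to be normal to $\partial\Omega$, the tangential line integral vanishes. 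Hence \eqref{compatible-condition} holds.

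For the converse, the conditions $\div\Bz = 0$ and $\Bz\cdot\Bn = 0$ give \eqref{compatibility-4-1} with $\BLambda = 0$ directly (both ${\rm div}_T\BLambda$ and $\BLambda\cdot\Bn$ vanish). The main step is verifying \eqref{compatibility-4-2}, i.e. $\int_\Omega \Bz\cdot\Bvarphi\,dx = 0$ for all $\Bvarphi\in K_T(\Omega)$. Taking the basis $\{\nabla^0 q_{j,*}^T\}$ of $K_T(\Omega)$ built as in Subsection 3.2 (case $\Bsigma = Id$) and integrating by parts over the cut domain $\Omega^0$, the interior term drops out because $\div\Bz = 0$, the $\partial\Omega$-term drops out because $\Bz\cdot\Bn = 0$, and one is left with $\sum_k [q_{j,*}^T]_k \langle\Bz\cdot\Bn_k,\,1\rangle_{\Sigma_k}$, which vanishes by the third hypothesis in \eqref{compatible-condition}; here $\Bz\cdot\Bn_k$ is single-valued on $\Sigma_k$ precisely because $\div\Bz = 0$ across the cut, so no singular part appears there. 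This identity, linking the $\Sigma_k$-fluxes of $\Bz$ to orthogonality against $K_T(\Omega)$, is the crux of the argument and the place where the flux conditions on the $\Sigma_j$ are consumed; I expect its jump-term bookkeeping to be the main obstacle. Theorem \ref{theorem4-0-1} then delivers $\Bpsi$ with the three interior/boundary properties and the estimate.

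It remains to produce $\langle\Bpsi\cdot\Bn,\,1\rangle_{\Gamma_i} = 0$ and uniqueness. For the flux I would use that the field furnished by Theorem \ref{theorem4-0-1} is constructed as $\Bpsi = \curl\Bxi$ (Lemma \ref{lemma4-0-3}), so its flux through each closed surface $\Gamma_i$ is zero by the divergence theorem. For uniqueness, if $\Bpsi_1,\Bpsi_2$ both satisfy all listed properties, their difference has $\curl\Bpsi = 0$, $\div\Bpsi = 0$, $\Bpsi\times\Bn = 0$ and zero flux through every $\Gamma_i$; thus $\Bpsi\in K_N^p(\Omega)$, and expanding in the basis $\{\nabla q_i^N\}$ from Theorem \ref{theorem4.3} (case $\Bepsilon = Id$), whose fluxes satisfy $\langle\nabla q_i^N\cdot\Bn,\,1\rangle_{\Gamma_k} = \delta_{ik}$, the vanishing fluxes force every coefficient to be zero, so $\Bpsi = 0$. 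This closes the equivalence and establishes uniqueness together with the quantitative estimate.
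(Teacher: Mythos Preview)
Your proposal is correct and matches the paper's approach exactly: the paper does not give a detailed proof but simply remarks that the lemma ``is a particular case of Theorem \ref{theorem4-0-1}'' and that ``the proof can also be found in \cite{AS}.'' You have filled in precisely the missing details---checking that \eqref{compatible-condition} implies the compatibility conditions \eqref{compatibility-4-1}--\eqref{compatibility-4-2} with $\BLambda=0$ via integration by parts over $\Omega^0$ against the basis $\{\nabla^0 q_{j,*}^T\}$, recovering the $\Gamma_i$--flux conditions from $\Bpsi=\curl\Bxi$, and deducing uniqueness from the characterization of $K_N^p(\Omega)$---and all of these steps are sound.
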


\begin{proof}[\bf Proof of Theorem \ref{decomposition1}] First, 
 the scalar potential $\chi \in W^{1, 2}(\Omega)$ is taken as a variational solution of the following problem:
\begin{equation}\nonumber
\forall\  \mu \in W^{1,2}(\Omega), \ \ \ \ \  \int_{\Omega} \Bsigma^{-1} \nabla \chi \cdot \nabla \mu \, dx = \int_{\Omega} \Bu \cdot \nabla \mu \, dx. 
\end{equation}
Such a scalar function $\chi$ is unique up to an additive constant, due to Lax-Milgram theorem. And it holds that 
\begin{equation}\label{5-1}
\|\nabla \chi \|_{L^2(\Omega)} \leq C \| \Bu \|_{L^2(\Omega)}.
\end{equation}
Moreover, the variational equality implies that 
\begin{equation}\label{5-2}
{\rm div}~( \Bu - \Bsigma^{-1} \nabla \chi ) = 0,\ \ \ \mbox{in}\ \Omega,\ \ \ \ \mbox{and}\ \ \ \ (\Bu - \Bsigma^{-1} \nabla \chi )\cdot \Bn = 0,\ \ \ \mbox{on}\ \partial \Omega. 
\end{equation}

Next, the $\Bsigma$-harmonic field $\Bh$ is chosen in the following way, 
\begin{equation}\nonumber \left\{
\begin{array}{l}
{\rm curl}~(\Bsigma \Bh) = 0,\ \ \ \mbox{in}\ \Omega,\\[2mm]
{\rm div}~\Bh = 0,\ \ \ \mbox{in}\ \Omega,\\[2mm]
\Bh \cdot \Bn = 0,\ \ \ \mbox{on}\ \partial \Omega,\\[2mm]
<\Bh\cdot \Bn_j, 1>_{\Sigma_j} = < (\Bu - \Bsigma^{-1} \nabla \chi)\cdot \Bn_j, 1>_{\Sigma_j },\ \ \ 1\leq j \leq N_2.
\end{array} \right.
\end{equation}
In fact, according to the proof of Theorem \ref{lemma3-3}, 
\begin{equation} \label{5-5}
\Bh = \sum_{j=1}^{N_2} < ( \Bu - \Bsigma^{-1} \nabla \chi)\cdot \Bn_j, 1>_{\Sigma_j}  \Bsigma^{-1} \nabla^0 q_j^T,
\end{equation}
where $\{ \Bsigma^{-1} \nabla^0 q_j^T;\ 1\leq j \leq N_2 \}$ is a basis for $K_{T, \Bsigma}^2(\Omega)$, derived in Subsection 3.2. Hence, 
\begin{equation}\nonumber
\| \Bh\|_{L^2(\Omega)}  \leq C \|\Bu - \Bsigma^{-1} \nabla \chi \|_{L^2(\Omega)} + C \|{\rm div}~( \Bu - \Bsigma^{-1}\nabla \chi) \|_{L^2(\Omega)}
\leq C \| \Bu\|_{L^2(\Omega)}, 
\end{equation}
due to the fact that ${\rm div}~ ( \Bu - \Bsigma^{-1} \nabla \chi ) = 0$ in $\Omega$.

Then let us define $\Bz =  \Bu - \Bh  - \Bsigma^{-1}\nabla \chi$. It is easy to check that $\Bz\in L^2(\Omega)$ satisfies that 
\begin{equation}\nonumber \left\{ 
\begin{array}{l}
{\rm div}~ \Bz = 0,\ \ \ \mbox{in}\ \Omega,\\[2mm]
\Bz \cdot \Bn = 0,\ \ \ \mbox{on}\ \partial \Omega,\\[2mm]
< \Bz \cdot \Bn_j, 1>_{\Sigma_j} = 0, \ \ \ 1 \leq j \leq N_2.
\end{array}
\right.
\end{equation}

It follows from Lemma \ref{asky} that there exists a unique vector potential $\Bw \in W^{1,2}_{{\rm div}^0}(\Omega) \cap W^{1, 2}_N(\Omega)$ such that 
\begin{equation}\nonumber
\Bz = {\rm curl}~\Bw ,\ \ \ \mbox{in}\ \Omega. 
\end{equation}
And it holds that 
\begin{equation}\nonumber
\|\Bw\|_{W^{1, 2}(\Omega)} \leq C \|\Bz\|_{L^2(\Omega)} \leq C \|\Bu\|_{L^2(\Omega)}.
\end{equation}

At last, let us prove the uniqueness of the above decomposition. Suppose $\Bu$ has another decomposition, 
\begin{equation}\nonumber
\Bu = \tilde{\Bh} + \Bsigma^{-1} \nabla \tilde{\chi} + {\rm curl}~\tilde{\Bw},
\end{equation}
where $\tilde{\Bh}\in K_{T, \Bsigma}^2(\Omega)$, $\tilde{\chi} \in W^{1, 2}(\Omega)$, and $\tilde{\Bw} \in W_{ {\rm div}^0 }^{1, 2}(\Omega) 
\cap W_{N}^{1, 2}(\Omega)$.  Then 
\begin{equation}\label{5-9}
0 = (\Bh - \tilde{\Bh}) + \Bsigma^{-1} \nabla ( \chi - \tilde{\chi} ) + {\rm curl}~( \Bw - \tilde{\Bw} ).
\end{equation}
Taking the $L^2$-inner product of \eqref{5-9} and $\Bsigma ( \Bh - \tilde{\Bh} )$, we have
\begin{equation}\nonumber
0 = \left< \Bh - \tilde{\Bh},\ \Bsigma ( \Bh - \tilde{\Bh} )  \right>, 
\end{equation}
which implies 
\begin{equation}\nonumber
\Bh = \tilde{\Bh}\ \ \ \mbox{in}\ \Omega.
\end{equation}
Similarly, taking the $L^2$-inner product of \eqref{5-9}   and $\nabla (\chi - \tilde{\chi} )$, $\Bsigma {\rm curl}~(\Bw - \tilde{\Bw} )$ resp. , we have
\begin{equation}\nonumber
\nabla (\chi - \tilde{\chi}) = 0, \ \ \ \mbox{and}\ \  \ {\rm curl}~(\Bw - \tilde{\Bw} ) = 0,\ \ \ \mbox{in}\ \Omega.
\end{equation}
It completes the proof of Theorem \ref{decomposition1}.

\end{proof}

\begin{remark}
Theorem \ref{decomposition1} can be conculded in a simple formula, 
\begin{equation}\nonumber
L^2(\Omega) = K_{T, \Bsigma}^2(\Omega) \oplus \Bsigma^{-1} \nabla W^{1, 2}(\Omega) \oplus {\rm curl}~\left(W^{1, 2}_{{\rm div}^0 }(\Omega) \cap W_{N}^{1, 2}(\Omega) \right). 
\end{equation}
\end{remark}

The above decomposition can be generalized to $W^{m, p}$-spaces. 

\begin{theorem}\label{decomposition2}
Let $\Bu \in W^{m, p}(\Omega)$, $m \in \mathbb{N}^*$, $1 < p < + \infty$. The decomposition for $\Bu$ in Theorem \ref{decomposition1} satisfies the following estimate:
\begin{equation} \label{5-11}
\|\Bh\|_{W^{m, p}(\Omega)} + \|\nabla \chi\|_{W^{m, p}(\Omega)} + \|\Bw\|_{W^{m+1, p}(\Omega)} \leq C \|\Bu \|_{W^{m, p}(\Omega)}.
\end{equation}
\end{theorem}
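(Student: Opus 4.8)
The plan is to take the decomposition $\Bu = \Bh + \Bsigma^{-1}\nabla\chi + \curl\Bw$ constructed in the proof of Theorem \ref{decomposition1} and estimate each of the three pieces in $W^{m,p}(\Omega)$, reusing the explicit formulas obtained there together with the higher-order elliptic and div--curl estimates already at our disposal. Since $\Bu\in W^{m,p}(\Omega)$ we have $\div\Bu\in W^{m-1,p}(\Omega)$ and, by the trace theorem, $\Bu\cdot\Bn\in W^{m-\frac1p,p}(\partial\Omega)$; these are the data we feed into the regularity theory.

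First, consider the scalar potential. The variational identity defining $\chi$ says exactly that $\chi$ is a weak solution of the conormal problem
\[
\div(\Bsigma^{-1}\nabla\chi)=\div\Bu\ \text{ in }\Omega,\qquad \Bsigma^{-1}\nabla\chi\cdot\Bn=\Bu\cdot\Bn\ \text{ on }\partial\Omega,
\]
the boundary condition being recorded in \eqref{5-2}. As $\Bsigma^{-1}$ has $C^\infty$ entries and is uniformly elliptic, the classical regularity theory for elliptic equations in divergence form with conormal boundary condition (\cite{Lieberman}) upgrades $\chi$ to $W^{m+1,p}(\Omega)$ and yields
\[
\|\nabla\chi\|_{W^{m,p}(\Omega)}\leq C\big(\|\div\Bu\|_{W^{m-1,p}(\Omega)}+\|\Bu\cdot\Bn\|_{W^{m-\frac1p,p}(\partial\Omega)}\big)\leq C\|\Bu\|_{W^{m,p}(\Omega)}.
\]
In particular $\Bu-\Bsigma^{-1}\nabla\chi$ is bounded in $W^{m,p}(\Omega)$ by $C\|\Bu\|_{W^{m,p}(\Omega)}$, and its divergence vanishes.

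Next, treat the harmonic field and the vector potential. By \eqref{5-5} one has $\Bh=\sum_{j=1}^{N_2}\langle(\Bu-\Bsigma^{-1}\nabla\chi)\cdot\Bn_j,1\rangle_{\Sigma_j}\,\Bsigma^{-1}\nabla^0 q_j^T$, and Theorem \ref{theorem3-3} gives $\Bsigma^{-1}\nabla^0 q_j^T\in K_{T,\Bsigma}(\Omega)\subset C^\infty(\overline{\Omega})$, so each basis field lies in $W^{m,p}(\Omega)$ with a norm $C_j$ independent of $\Bu$; since $\div(\Bu-\Bsigma^{-1}\nabla\chi)=0$, the coefficients are controlled by $C\|\Bu-\Bsigma^{-1}\nabla\chi\|_{L^p(\Omega)}\leq C\|\Bu\|_{W^{m,p}(\Omega)}$, exactly as in \eqref{3-3-4}, whence $\|\Bh\|_{W^{m,p}(\Omega)}\leq C\|\Bu\|_{W^{m,p}(\Omega)}$. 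Now set $\Bz=\Bu-\Bh-\Bsigma^{-1}\nabla\chi\in W^{m,p}(\Omega)$, which obeys $\|\Bz\|_{W^{m,p}(\Omega)}\leq C\|\Bu\|_{W^{m,p}(\Omega)}$ by the previous bounds, while $\Bw$ satisfies $\curl\Bw=\Bz$, $\div\Bw=0$ in $\Omega$ and $\Bw\times\Bn=0$ on $\partial\Omega$. Applying the Friedrichs-type inequality \eqref{Sobolev2} with $m$ replaced by $m+1$ (so the $\div$ and tangential-trace terms drop out) together with $\|\Bw\|_{L^p(\Omega)}\leq\|\Bw\|_{W^{1,p}(\Omega)}\leq C\|\Bz\|_{L^p(\Omega)}$ from Lemma \ref{asky}, we obtain
\[
\|\Bw\|_{W^{m+1,p}(\Omega)}\leq C\big(\|\Bw\|_{L^p(\Omega)}+\|\Bz\|_{W^{m,p}(\Omega)}\big)\leq C\|\Bu\|_{W^{m,p}(\Omega)}.
\]
(One could equally invoke the high-order estimate \eqref{4-0-33} of Theorem \ref{theorem4-0-2} for the system solved by $\Bw$; its compatibility conditions hold since $\Bz=\curl\Bw$ with $\Bw\times\Bn=0$.) Adding the three bounds gives \eqref{5-11}.

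The step requiring the most care is the regularity of $\chi$: one must be sure that the a priori only $W^{1,2}$-variational solution furnished by Theorem \ref{decomposition1} is the very function to which the $W^{m+1,p}$ conormal estimate applies. This is not circular --- $\chi$ is a distributional solution of the elliptic equation with right-hand data in $W^{m-1,p}(\Omega)\times W^{m-\frac1p,p}(\partial\Omega)$, and the local interior and boundary regularity estimates, being insensitive to whether $p<2$ or $p>2$, bootstrap it into $W^{m+1,p}(\Omega)$ with norm controlled by the data through uniqueness up to an additive constant. Once this is granted, the remaining estimates are immediate consequences of Theorem \ref{theorem3-3}, the basis formula \eqref{5-5}, and the inequalities \eqref{Sobolev2} / \eqref{4-0-33} already established above.
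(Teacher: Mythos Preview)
Your proof is correct and follows essentially the same route as the paper: identify $\chi$ as the solution of the conormal problem $\div(\Bsigma^{-1}\nabla\chi)=\div\Bu$, $\Bsigma^{-1}\nabla\chi\cdot\Bn=\Bu\cdot\Bn$ and invoke elliptic regularity; bound $\Bh$ via the explicit basis formula \eqref{5-5} together with the smoothness of $K_{T,\Bsigma}(\Omega)$; and control $\Bw$ through Lemma \ref{asky} combined with \eqref{Sobolev2}. Your extra discussion on bootstrapping the $W^{1,2}$ variational solution $\chi$ into $W^{m+1,p}$ is a welcome clarification that the paper leaves implicit.
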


\begin{proof}
When $\Bu$ belongs to $W^{m, p}(\Omega)$, $\chi$ is in fact a solution to the following elliptic equation, 
\begin{equation}\nonumber
\left\{
\begin{array}{l}
{\rm div}~(\Bsigma^{-1} \nabla \chi) = {\rm div}~\Bu ,\ \ \ \mbox{in}\ \Omega,\\[2mm]
\Bsigma^{-1} \nabla \chi \cdot \Bn = \Bu \cdot \Bn, \ \ \ \mbox{on}\ \partial \Omega.
\end{array}
\right.
\end{equation}
It follows from the classical regularity theory for elliptic equations with conormal boundary condition\cite{Lieberman}, that 
\begin{equation}\nonumber
\|\nabla \chi\|_{W^{m, p}(\Omega)} \leq C \| \Bu \|_{W^{m, p}(\Omega)}.
\end{equation}

On the other hand, 
\begin{equation}\nonumber
\| \Bh \|_{W^{m, p}(\Omega)} \leq C \sum_{j =1}^{N_2} \left| < ( \Bu - \Bsigma^{-1} \nabla \chi) \cdot \Bn_j, \, 1>_{\Sigma_j} \right| 
\leq C \left\| \Bu  - \Bsigma^{-1}\nabla \chi \right\|_{W^{m, p}(\Omega)} \leq C \| \Bu \|_{W^{m, p}(\Omega)}.
\end{equation} 
And the estimate for $\Bw$ follows from Lemma \ref{asky} and the inequality \eqref{Sobolev2}.

\end{proof}

\vspace{3mm}
Define the function space $W^{1, 2}_T(\Omega)$, 
\begin{equation} \nonumber
W_T^{1, 2}(\Omega)   = \left\{ \Bv \in W^{1, 2}(\Omega):\ \Bv \cdot \Bn = 0, \ \ \mbox{on}\ \partial \Omega, \ \ \ \ < \Bv \cdot \Bn_j, 1>_{\Sigma_j}= 0, \ 1\leq j \leq N_2                \right\}. 
\end{equation}

\begin{theorem}\label{decomposition3}
Let $\Bu\in L^2(\Omega)$. Then there exist $\Bh \in K_{N, \Bepsilon}^2(\Omega)$, $\chi \in W_0^{1, 2}(\Omega)$, and $\Bw\in W_{{\rm div}^0}^{1, 2}(\Omega) \cap W_T^{1, 2}(\Omega)$, such that $\Bu$ can be represented as
\begin{equation}\nonumber
\Bu = \Bh + \nabla \chi + \Bepsilon^{-1} {\rm curl}~\Bw,
\end{equation}
where $\Bh$, $\chi$, and $\Bw$ are all unique. Moreover, we have the estimate
\begin{equation}\label{5-21}
\|\Bh\|_{L^2(\Omega)} +  \|\nabla \chi\|_{L^2(\Omega)} + \|\Bw\|_{W^{1, 2}(\Omega)} 
\leq C \| \Bu \|_{L^2(\Omega)}. 
\end{equation}
\end{theorem}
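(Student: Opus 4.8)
The plan is to dualize the proof of Theorem \ref{decomposition1}, interchanging the roles of the normal and tangential boundary conditions: the scalar potential is now taken with a Dirichlet condition, the harmonic part is chosen inside $K_{N,\Bepsilon}^2(\Omega)$ using the basis of Theorem \ref{theorem4.3}, and the solenoidal part is produced by Lemma \ref{lemma3-1} in place of Lemma \ref{asky}. \textbf{Step 1.} First I would let $\chi\in W_0^{1,2}(\Omega)$ be the unique solution (Lax--Milgram, via the ellipticity of $\Bepsilon$ and Poincar\'e's inequality) of
\[
\forall\ \mu\in W_0^{1,2}(\Omega),\qquad \int_\Omega\Bepsilon\nabla\chi\cdot\nabla\mu\,dx=\int_\Omega\Bepsilon\Bu\cdot\nabla\mu\,dx,
\]
so that $\|\nabla\chi\|_{L^2(\Omega)}\le C\|\Bu\|_{L^2(\Omega)}$. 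Testing against $\mu\in C_0^\infty(\Omega)$ gives $\div(\Bepsilon(\Bu-\nabla\chi))=0$ in $\Omega$, while $\chi|_{\partial\Omega}=0$ (constant on each $\Gamma_i$) yields $\nabla\chi\times\Bn=0$ on $\partial\Omega$.

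\textbf{Step 2.} Next I would absorb the boundary fluxes into the harmonic part. With $\{\nabla q_i^N;\,1\le i\le N_1\}$ the basis of $K_{N,\Bepsilon}^2(\Omega)$ from Theorem \ref{theorem4.3}, I set
\[
\Bh=\sum_{i=1}^{N_1}<\Bepsilon(\Bu-\nabla\chi)\cdot\Bn,\,1>_{\Gamma_i}\nabla q_i^N\in K_{N,\Bepsilon}^2(\Omega).
\]
Since $<\Bepsilon\nabla q_k^N\cdot\Bn,\,1>_{\Gamma_i}=\delta_{ik}$ for $1\le i,k\le N_1$ and $\div(\Bepsilon\Bh)=0$, this gives $<\Bepsilon(\Bu-\Bh-\nabla\chi)\cdot\Bn,\,1>_{\Gamma_i}=0$ for $1\le i\le N_1$, hence also for $i=0$ because the total flux of a divergence-free field vanishes; and $\|\Bh\|_{L^2(\Omega)}\le C\|\Bu\|_{L^2(\Omega)}$ by \eqref{normal} together with $\div(\Bepsilon(\Bu-\nabla\chi))=0$. \textbf{Step 3.} Put $\Bz=\Bepsilon(\Bu-\Bh-\nabla\chi)$; then $\div\Bz=0$ in $\Omega$ and $<\Bz\cdot\Bn,\,1>_{\Gamma_i}=0$ for $0\le i\le N_1$, which are exactly the hypotheses of Lemma \ref{lemma3-1}. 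That lemma yields a unique $\Bw\in W^{1,2}(\Omega)$ with $\Bz=\curl\Bw$, $\div\Bw=0$ in $\Omega$, $\Bw\cdot\Bn=0$ on $\partial\Omega$ and $<\Bw\cdot\Bn_j,\,1>_{\Sigma_j}=0$ for $1\le j\le N_2$, i.e. $\Bw\in W_{{\rm div}^0}^{1,2}(\Omega)\cap W_T^{1,2}(\Omega)$, with $\|\Bw\|_{W^{1,2}(\Omega)}\le C\|\Bz\|_{L^2(\Omega)}\le C\|\Bu\|_{L^2(\Omega)}$. Since $\Bepsilon^{-1}\Bz=\Bu-\Bh-\nabla\chi$, this gives $\Bu=\Bh+\nabla\chi+\Bepsilon^{-1}\curl\Bw$, and summing the three bounds yields \eqref{5-21}.

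\textbf{Step 4 (uniqueness).} Suppose $0=\Bh+\nabla\chi+\Bepsilon^{-1}\curl\Bw$ with $\Bh\in K_{N,\Bepsilon}^2(\Omega)$, $\chi\in W_0^{1,2}(\Omega)$, $\Bw\in W_{{\rm div}^0}^{1,2}(\Omega)\cap W_T^{1,2}(\Omega)$. Multiplying by $\Bepsilon$ and taking the $L^2$-inner product with $\Bh$: the term $\int_\Omega\Bepsilon\nabla\chi\cdot\Bh\,dx$ vanishes since $\div(\Bepsilon\Bh)=0$ and $\chi|_{\partial\Omega}=0$, and $\int_\Omega\curl\Bw\cdot\Bh\,dx$ vanishes since $\curl\Bh=0$ and $\Bh\times\Bn=0$ on $\partial\Omega$; hence $\int_\Omega\Bepsilon\Bh\cdot\Bh\,dx=0$, so $\Bh=0$. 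Then from $0=\Bepsilon\nabla\chi+\curl\Bw$, the inner product with $\nabla\chi$ kills the curl term (using $\div\curl\Bw=0$ and $\chi|_{\partial\Omega}=0$), so $\nabla\chi=0$; finally $\curl\Bw=0$ together with $\div\Bw=0$, $\Bw\cdot\Bn=0$ on $\partial\Omega$ and $<\Bw\cdot\Bn_j,\,1>_{\Sigma_j}=0$ forces $\Bw=0$ by the uniqueness clause of Lemma \ref{lemma3-1} applied to the zero field.

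\textbf{Main obstacle.} No single step is deep; the proof is a careful dualization of Theorem \ref{decomposition1}. The delicate point is the bookkeeping in Steps 2--3: taking $\chi$ with the Dirichlet rather than conormal condition so that $\nabla\chi$ is purely normal on $\partial\Omega$ and $L^2$-orthogonal both to $K_{N,\Bepsilon}^2(\Omega)$ and to $\Bepsilon^{-1}\curl\Bw$, and then subtracting exactly the right $\Bepsilon$-harmonic field so that $\Bz$ lands precisely in the hypothesis class of Lemma \ref{lemma3-1} and the potential it produces lands in $W_{{\rm div}^0}^{1,2}(\Omega)\cap W_T^{1,2}(\Omega)$. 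One should check with care the flux identity at $\Gamma_0$ and that the vanishing trace of $\chi$ genuinely annihilates the cross terms in the uniqueness argument.
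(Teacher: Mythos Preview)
Your proposal is correct and follows essentially the same approach as the paper: the same variational construction of $\chi\in W_0^{1,2}(\Omega)$, the same choice of $\Bh$ via the basis $\{\nabla q_i^N\}$ of Theorem \ref{theorem4.3}, the same application of Lemma \ref{lemma3-1} to $\Bz=\Bepsilon(\Bu-\Bh-\nabla\chi)$, and the same orthogonality-based uniqueness argument. Your write-up is in fact slightly more explicit than the paper's in justifying the flux condition at $\Gamma_0$ and in spelling out why each cross term vanishes in the uniqueness step.
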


\begin{proof}
The scalar potential $\chi \in W_0^{1, 2}(\Omega)$ is taken as a variational solution of the following problem 
\begin{equation} \nonumber
\forall \ \varphi \in W_0^{1, 2}(\Omega), \ \ \ \int_{\Omega} \Bepsilon \nabla \chi \cdot \nabla \varphi \, dx 
= \int_{\Omega} \Bepsilon \Bu \cdot \nabla \varphi \, dx. 
\end{equation}
Such a scalar function $\chi$ is unique, due to Lax-Milgram theorem. And it holds that 
\begin{equation}
\|\nabla \chi \|_{L^2(\Omega)} \leq C \| \Bu \|_{L^2(\Omega)}. 
\end{equation}
Moreover, the variational equality implies that 
\begin{equation}\nonumber
{\rm div}~(\Bepsilon u - \Bepsilon \nabla \chi ) = 0,\ \ \ \mbox{in}\ \Omega.
\end{equation}

Next, the $\Bepsilon$-harmonic field $\Bh$ is chosen in the following way, 
\begin{equation}\nonumber
\left\{ \begin{array}{l}
{\rm curl }~\Bh = 0, \ \ \ \mbox{in}\ \Omega,\\[2mm]
{\rm div}~(\Bepsilon \Bh ) = 0,\ \ \ \mbox{in}\ \Omega, \\[2mm]
\Bh \times \Bn = 0,\ \ \ \mbox{on}\ \partial \Omega,\\[2mm]
< \Bepsilon \Bh \cdot \Bn, 1>_{\Gamma_i} = <(\Bepsilon \Bu - \Bepsilon \nabla \chi ) \cdot \Bn, 1>_{\Gamma_i} ,\ \ 1\leq i \leq N_1. 
\end{array}  \right.
\end{equation}
In fact, according to the proof of Theorem \ref{theorem4.3}, 
\begin{equation}
\Bh = \sum_{i =1}^{N_1} <(\Bepsilon \Bu - \Bepsilon \nabla \chi) \cdot \Bn, 1>_{\Gamma_i} \nabla q_i^N,
\end{equation}
where $\{ \nabla q_i^N ; \ 1 \leq i \leq N_1  \}$ is a basis for $K_{N, \Bepsilon}^2(\Omega)$, derived in Subsection 4.3. Hence, 
\begin{equation}
\|\Bh \|_{L^2(\Omega)} \leq  C \|\Bepsilon \Bu  - \Bepsilon \nabla \chi\|_{L^2(\Omega)} 
+ C \| {\rm div}~(\Bepsilon \Bu - \Bepsilon \nabla \chi)        \|_{L^2(\Omega)} 
\leq C \|\Bu \|_{L^2(\Omega)}, 
\end{equation}
due to the fact that ${\rm div}~(\Bepsilon \Bu  - \Bepsilon \nabla \chi ) =  0$ in $\Omega$. 

Let $\Bz = \Bepsilon ( \Bu - \Bh  -\nabla \chi)$, $\Bz \in L^2(\Omega)$. It easy to easy to check that 
\begin{equation}\nonumber 
\left\{
\begin{array}{l}
{\rm div}~\Bz = 0,\ \ \mbox{in}\ \Omega,\\[2mm]
< \Bz \cdot \Bn, 1>_{\Gamma_i} = 0,\ \ \ 0\leq i \leq N_1. 
\end{array}
\right.
\end{equation}
It follows from Lemma \ref{lemma3-1} that there exists a vector potential $\Bw\in W_{{\rm div}^0}^{1, 2}(\Omega) \cap W^{1, 2}_T(\Omega)$, such that 
\begin{equation}
\Bz = {\rm curl}~\Bw,\ \ \ \mbox{in}\ \Omega .
\end{equation}
Moreover, $\Bw$ is unique and we have the estimate
\begin{equation}\nonumber
\|\Bw\|_{W^{1, 2}(\Omega)} \leq C \|\Bz\|_{L^2(\Omega)} \leq C \|\Bu \|_{L^2(\Omega)}. 
\end{equation}

At last, let us talk about the uniqueness of the decomposition. Suppose that $\Bu$ has another decomposition, 
\begin{equation}\nonumber
\Bu =  \tilde{\Bh} + \nabla \tilde{\chi} + \Bepsilon^{-1} {\rm curl}~\tilde{\Bw} , 
\end{equation}
where $\tilde{\Bh}\in K_{N, \Bepsilon}^2(\Omega)$, $\tilde{\chi } \in W^{1,2}_0 (\Omega)$, and $\tilde{\Bw} \in W_{{\rm div}^0 }^{1,2 }(\Omega) \cap W_T^{1,2}(\Omega). $ Then
\begin{equation}\label{5-35}
 0 = ( \Bh - \tilde{\Bh}) + \nabla (\chi  - \tilde{\chi}) + \Bepsilon^{-1} {\rm curl}~( \Bw - \tilde{\Bw} ). 
\end{equation}
Taking the $L^2$-inner product of \eqref{5-35} and $\Bepsilon (\Bh - \tilde{\Bh})$, 
we have
\begin{equation}\nonumber
0 = \left< (\Bh - \tilde{\Bh} , \ \Bepsilon (\Bh - \tilde{\Bh} ) \right> .
\end{equation}
It implies that 
\begin{equation}\nonumber
\Bh = \tilde{\Bh},\ \ \mbox{in}\ \Omega. 
\end{equation}
Similarly, taking the $L^2$-inner product of \eqref{5-35} and $\Bepsilon \nabla(\chi - \tilde{\chi})$, ${\rm curl}~(\Bw -\tilde{\Bw})$
resp., we have 
\begin{equation}\nonumber
\nabla (\chi - \tilde{\chi}) = 0,\ \ \ \ \mbox{and}\ \ \ {\rm curl}~(\Bw - \tilde{\Bw}) = 0,\ \ \mbox{in}\ \Omega. 
\end{equation}
That completes the proof of  Theorem \ref{decomposition3}
\end{proof}

\begin{remark}
Theorem \ref{decomposition3} can be conculded in a simple formula, 
\begin{equation}\nonumber
L^2(\Omega) = K_{N, \Bepsilon}^2(\Omega) \oplus  \nabla W^{1, 2}_0(\Omega) \oplus \Bepsilon^{-1} {\rm curl}~\left(W^{1, 2}_{{\rm div}^0 }(\Omega) \cap W_{T}^{1, 2}(\Omega) \right). 
\end{equation}
\end{remark}

The above decomposition can be generalized to $W^{m, p}$-spaces. 

\begin{theorem}\label{decomposition4}
Let $\Bu \in W^{m, p}(\Omega)$, $m \in \mathbb{N}^*$, $1 < p < +\infty$. The decomposition in Theorem \ref{decomposition3} satisfies the following estimate,
\begin{equation}\label{5-41}
\|\Bh \|_{W^{m, p}(\Omega)} + \|\nabla \chi \|_{W^{m, p}(\Omega)} + \|\Bw\|_{W^{m+1, p}(\Omega)} 
\leq C \| \Bu \|_{W^{m, p}(\Omega)}. 
\end{equation}
\end{theorem}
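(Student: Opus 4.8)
The plan is to re-use the three building blocks $\chi$, $\Bh$, $\Bw$ produced in the proof of Theorem \ref{decomposition3}: existence and uniqueness of the decomposition are already settled there, so the only thing left is the estimate \eqref{5-41}, which I would obtain by upgrading separately each of the three $L^2$-bounds to a $W^{m,p}$-bound. (The construction itself makes sense for $\Bu \in W^{m, p}(\Omega)$: $\chi$ is the solution of a Dirichlet problem, $\Bh$ is an explicit combination of the smooth null-space basis, and $\Bw$ comes from Lemma \ref{lemma3-1}; none of these steps needs $\Bu \in L^2$ beyond $\Bu \in L^p$.) Each upgrade uses the natural regularity tool for that piece: elliptic regularity for $\chi$, smoothness of the null-space basis for $\Bh$, and the high-order estimate of Lemma \ref{lemma3-1} for $\Bw$.

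\emph{The scalar potential.} When $\Bu \in W^{m,p}(\Omega)$, the variational characterization of $\chi$ in Theorem \ref{decomposition3} is exactly the weak form of
\begin{equation}\nonumber
{\rm div}~(\Bepsilon \nabla \chi) = {\rm div}~(\Bepsilon \Bu)\ \ \mbox{in}\ \Omega, \qquad \chi = 0\ \ \mbox{on}\ \partial\Omega ,
\end{equation}
whose right-hand side belongs to $W^{m-1,p}(\Omega)$ because $\Bepsilon$ has smooth entries. The classical $L^p$-theory for elliptic equations with Dirichlet data (\cite{GT}; cf. also \cite{Lieberman} for the conormal problems used elsewhere in this paper) then yields $\chi \in W^{m+1,p}(\Omega)$ with
\begin{equation}\nonumber
\|\nabla\chi\|_{W^{m,p}(\Omega)} \leq C\,\|{\rm div}~(\Bepsilon\Bu)\|_{W^{m-1,p}(\Omega)} \leq C\,\|\Bu\|_{W^{m,p}(\Omega)} .
\end{equation}

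\emph{The harmonic part and the vector potential.} Recall $\Bh = \sum_{i=1}^{N_1}\langle \Bepsilon(\Bu-\nabla\chi)\cdot\Bn,\,1\rangle_{\Gamma_i}\,\nabla q_i^N$, and by Theorem \ref{Nullspace2} each $\nabla q_i^N \in K_{N,\Bepsilon}(\Omega)\subset C^\infty(\overline{\Omega})$, so $\tilde C_i := \|\nabla q_i^N\|_{W^{m,p}(\Omega)} < \infty$. Each scalar coefficient is controlled via \eqref{normal} by $\|\Bepsilon(\Bu-\nabla\chi)\|_{L^p(\Omega)}+\|{\rm div}~(\Bepsilon(\Bu-\nabla\chi))\|_{L^p(\Omega)}$, and the second term \emph{vanishes} since by construction ${\rm div}~(\Bepsilon(\Bu-\nabla\chi))=0$ in $\Omega$; hence $\|\Bh\|_{W^{m,p}(\Omega)}\leq C\|\Bepsilon(\Bu-\nabla\chi)\|_{L^p(\Omega)}\leq C\|\Bu\|_{W^{m,p}(\Omega)}$ by the previous step. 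Now set $\Bz = \Bepsilon(\Bu-\Bh-\nabla\chi)$. By the two bounds above and the smoothness of $\Bepsilon$, $\Bz \in W^{m,p}(\Omega)$ with $\|\Bz\|_{W^{m,p}(\Omega)}\leq C\|\Bu\|_{W^{m,p}(\Omega)}$, and $\Bz$ still satisfies the hypotheses of Lemma \ref{lemma3-1}, i.e. ${\rm div}~\Bz = 0$ in $\Omega$ and $\langle\Bz\cdot\Bn,1\rangle_{\Gamma_i}=0$ for $0\leq i\leq N_1$. The unique potential $\Bw$ with ${\rm curl}~\Bw=\Bz$ given by that lemma then obeys the high-order estimate \eqref{3-6} (applied with $m$ replaced by $m+1 \geq 2$), so $\|\Bw\|_{W^{m+1,p}(\Omega)}\leq C\|\Bz\|_{W^{m,p}(\Omega)}\leq C\|\Bu\|_{W^{m,p}(\Omega)}$. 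Summing the three estimates gives \eqref{5-41}.

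The argument is essentially bookkeeping, so there is no genuinely hard step; the points that need care are verifying that the variable-coefficient elliptic regularity for the Dirichlet problem holds at the stated order $W^{m+1,p}$, and, in the $\Bh$- and $\Bw$-steps, keeping track of the exact identity ${\rm div}~(\Bepsilon(\Bu-\nabla\chi))=0$ in $\Omega$ — this both eliminates the would-be lower-order term in the bound for the boundary functionals $\langle\Bepsilon(\Bu-\nabla\chi)\cdot\Bn,1\rangle_{\Gamma_i}$ and guarantees that $\Bz$ meets the compatibility conditions required by Lemma \ref{lemma3-1}.
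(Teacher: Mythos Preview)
Your proof is correct and follows essentially the same route as the paper: elliptic Dirichlet regularity for $\chi$, smoothness of the null-space basis $\{\nabla q_i^N\}$ together with a bound on the scalar coefficients for $\Bh$, and the high-order estimate \eqref{3-6} of Lemma \ref{lemma3-1} for $\Bw$. Your write-up is in fact a bit more careful than the paper's, since you explicitly justify why the decomposition of Theorem \ref{decomposition3} still makes sense when $\Bu$ is only assumed in $W^{m,p}(\Omega)$ rather than $L^2(\Omega)$, and you spell out the use of \eqref{normal} with the observation that ${\rm div}\bigl(\Bepsilon(\Bu-\nabla\chi)\bigr)=0$ kills the second term.
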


\begin{proof}
When $\Bu \in W^{m, p}(\Omega)$, $\chi$ is in fact a solution of the system 
\begin{equation}\nonumber
\left\{ \begin{array}{l}
{\rm div}~(\Bepsilon \nabla \chi ) = {\rm div}~(\Bepsilon \Bu),\ \ \ \mbox{in}\ \Omega, \\[2mm]
\chi = 0, \ \ \ \mbox{on}\ \partial \Omega. 
\end{array}
\right.
\end{equation}
It follows from the classical regularity theory for elliptic equation with Dirichlet boundary condition\cite{GT}, that
\begin{equation}\nonumber
\|\nabla \chi \|_{W^{m, p}(\Omega)} \leq C \| \Bu \|_{W^{m, p}(\Omega)}. 
\end{equation}

On the other hand, 
\begin{equation}\nonumber
\| \Bh \|_{W^{m, p}(\Omega)} \leq C \sum_{i =1}^{N_1} 
\left|<(\Bepsilon \Bu - \Bepsilon \nabla \chi ) \cdot \Bn ,  \, 1>_{\Gamma_i}   \right|
\leq C \|\Bepsilon \Bu -\Bepsilon \nabla \chi \|_{W^{m, p}(\Omega)} 
\leq C \| \Bu \|_{W^{m, p}(\Omega)}.
\end{equation}
And the estimate for $\Bw$ follows from Lemma \ref{lemma3-1}. 
\end{proof}

\begin{remark}
It is still open to us whether the estimates \eqref{5-11} and \eqref{5-41} hold when $m =0$, $p\neq 2$. 
\end{remark}

{\bf Acknowledgment}\ \ The work of the author was partially supported by NSFC grant No. 11671289.

\end{document}